\newcommand{\TITLE}{Uniform decay of function norms}
\let\save@mathaccent\mathaccent
\newcommand*\if@single[3]{%
  \setbox0\hbox{${\mathaccent"0362{#1}}^H$}%
  \setbox2\hbox{${\mathaccent"0362{\kern0pt#1}}^H$}%
  \ifdim\ht0=\ht2 #3\else #2\fi
  }
\newcommand*\rel@kern[1]{\kern#1\dimexpr\macc@kerna}
\newcommand*\widebar[1]{\@ifnextchar^{{\wide@bar{#1}{0}}}{\wide@bar{#1}{1}}}
\newcommand*\wide@bar[2]{\if@single{#1}{\wide@bar@{#1}{#2}{1}}{\wide@bar@{#1}{#2}{2}}}
\newcommand*\wide@bar@[3]{%
  \begingroup
  \def\mathaccent##1##2{%
    \let\mathaccent\save@mathaccent
    \if#32 \let\macc@nucleus\first@char \fi
    \setbox\z@\hbox{$\macc@style{\macc@nucleus}_{}$}%
    \setbox\tw@\hbox{$\macc@style{\macc@nucleus}{}_{}$}%
    \dimen@\wd\tw@
    \advance\dimen@-\wd\z@
    \divide\dimen@ 3
    \@tempdima\wd\tw@
    \advance\@tempdima-\scriptspace
    \divide\@tempdima 10
    \advance\dimen@-\@tempdima
    \ifdim\dimen@>\z@ \dimen@0pt\fi
    \rel@kern{0.6}\kern-\dimen@
    \if#31
      \overline{\rel@kern{-0.6}\kern\dimen@\macc@nucleus\rel@kern{0.4}\kern\dimen@}%
      \advance\dimen@0.4\dimexpr\macc@kerna
      \let\final@kern#2%
      \ifdim\dimen@<\z@ \let\final@kern1\fi
      \if\final@kern1 \kern-\dimen@\fi
    \else
      \overline{\rel@kern{-0.6}\kern\dimen@#1}%
    \fi
  }%
  \macc@depth\@ne
  \let\math@bgroup\@empty \let\math@egroup\macc@set@skewchar
  \mathsurround\z@ \frozen@everymath{\mathgroup\macc@group\relax}%
  \macc@set@skewchar\relax
  \let\mathaccentV\macc@nested@a
  \if#31
    \macc@nested@a\relax111{#1}%
  \else
    \def\gobble@till@marker##1\endmarker{}%
    \futurelet\first@char\gobble@till@marker#1\endmarker
    \ifcat\noexpand\first@char A\else
      \def\first@char{}%
    \fi
    \macc@nested@a\relax111{\first@char}%
  \fi
  \endgroup
}
\numberwithin{equation}{section}
\theoremstyle{plain}
\newtheorem{theorem}{Theorem}[section]
\newtheorem{corollary}[theorem]{Corollary}
\newtheorem{proposition}[theorem]{Proposition}
\theoremstyle{definition}
\newtheorem{definition}[theorem]{Definition}
\newtheorem{remark}[theorem]{Remark}
\newcommand{\R}{\mathbb{R}}
\newcommand{\rn}{\R^n}
\newcommand{\N}{\mathbb{N}}
\newcommand{\M}{\mathscr{M}}
\newcommand{\Mpl}{\M^+}
\newcommand{\RR}{R}
\newcommand{\RM}{(\RR,\mu)}
\newcommand{\MRM}{\M\RM}
\newcommand{\MOR}{\M_0\RM}
\newcommand{\fund}{\varphi}
\newcommand{\fundMin}{\fund^{min}}
\newcommand{\fundMax}{\fund^{max}}
\newcommand*\dd{\mathop{}\!\mathrm{d}}
\DeclareMathOperator{\spt}{supp}
\DeclareMathOperator*{\ac}{\overset{*}{\hookrightarrow}}
\DeclareMathOperator*{\acLoc}{\overset{*}{\underset{loc}{\hookrightarrow}}}
\DeclareMathOperator*{\acInfty}{\overset{*}{\underset{\infty}{\hookrightarrow}}}
\DeclareMathOperator*{\weakacInfty}{\underset{\infty}{\hookrightarrow}}
\title{\TITLE}
\author{Zden\v ek Mihula}
\address{Zden\v ek Mihula, Czech Technical University in Prague, Faculty of Electrical Engineering, Department of Mathematics, Technick\'a~2, 166~27 Praha~6, Czech Republic}
\email{mihulzde@fel.cvut.cz}
\urladdr{\href{https://orcid.org/0000-0001-6962-7635}{0000-0001-6962-7635}}
\author{Maximili\'an P\'andy}
\address{Maximili\'an P\'andy, Department of Mathematical Analysis,
Faculty of Mathematics and Physics,
Charles University,
So\-ko\-lo\-vsk\'a~83,
186~75 Praha~8,
Czech Republic; School of Computation, Information and Technology, Technical University of Munich, Arcisstrasse 21, 80333 Munich, Germany}
\email{maxpandy123@gmail.com}
\begin{document}
\setcitestyle{numbers}
\bibliographystyle{plainnat}

\subjclass[2020]{46E30,46E35}
\keywords{compactness, infinite measure, quasi-Banach function spaces, almost-compact embeddings, rearrangement-invariant spaces, fundamental functions}
\thanks{This research was partly supported by grant no.~23-04720S of the Czech Science Foundation and by Danube Region Grant no.~8X25005 (SOFTA) of the Czech Ministry of Education, Youth and Sports}

\begin{abstract}
We introduce and study two new relations between function spaces over measure spaces of infinite measure, motivated by the question of establishing compactness. The first relation captures the uniform decay of function (quasi-)norms ``at infinity''. It appeared implicitly in the first author's recent work on the compactness of Sobolev embeddings of radially symmetric functions on $\rn$. The second is a suitably localized version of the relation of almost-compact embeddings, which has been successfully used to study compactness in function spaces over measure spaces of finite measure, but becomes of no use in the case of infinite measure.  Our framework is that of quasi-Banach function spaces, which need not be normable or rearrangement invariant. This level of generality leads us to introduce the notion of  extremal fundamental functions associated with a (quasi-)Banach function space. We provide several concrete examples and establish an abstract compactness principle involving the new relations. Finally, we demonstrate a possible application of this principle to embeddings of inhomogeneous Sobolev spaces on $\rn$.
\end{abstract}

\maketitle


\section{Introduction}
This paper is devoted to studying two new relations between function spaces, motivated by the notorious problem of establishing compactness in function spaces over measure spaces of infinite measure. Before introducing these relations, we first explain their origin.

Let $T\colon A \to Y$ be a bounded (linear) operator from a normed space $A$ to a Banach function space $Y$ over a nonatomic $\sigma$-finite measure space $\RM$. An often applicable general approach to establishing compactness of $T$ can be divided into two independent steps. The first is to show that $T$ maps every bounded sequence in $A$ to a sequence in $Y$ that contains a subsequence converging pointwise $\mu$-a.e. This requires a close interplay between the structure of $T$ and that of $A$, and typically must be addressed on a case-by-case basis. In particular, this step cannot be formulated purely in terms of function spaces without knowledge of the operator $T$ itself. The second step is to show that $T$ maps bounded sets in $A$ to sets with \emph{uniformly absolutely continuous norm} in $Y$. Recall that $M\subseteq Y$ has uniformly absolutely continuous norm if
\begin{equation*}
    \lim_{n\to\infty} \sup_{f\in M} \|f\chi_{E_n}\|_Y = 0
\end{equation*}
for every sequence $\{E_n\}_{n=1}^\infty\subseteq\RR$ of $\mu$-measurable sets such that $\chi_{E_n} \to 0$ pointwise $\mu$-a.e. Assuming that both steps can be carried out, the compactness of $T$ follows (see, e.g.,~\cite[page~31]{BS} or \cite{LZ:63}). To show that $T$ maps bounded sets in $A$ to sets with uniformly absolutely continuous norm in $Y$, one can often proceed as follows. One finds a Banach function space $Z$ such that $T\colon A \to Z$ is bounded and such that every bounded set in $Z$ has uniformly absolutely continuous norm in $Y$. We denote this property by $Z\ac Y$. Once this is verified, the conclusion follows easily. For example, when $0<\mu(\RR)<\infty$,
\begin{equation*}
    L^p\RM \ac L^q\RM \qquad \text{if and only if} \qquad p > q.
\end{equation*}
Note that $Z\ac Y$ is a relation between two function spaces that does not involve the operator $T$ at all. This approach, which implicitly or explicitly uses the relation $\ac$, has been successfully used to establish (and in some cases characterize) the compactness of various Sobolev (trace) embeddings and integral operators (see~\cite{CM:19,CM:22,EGN:23,F:18,KP:08,LZ:63, MPT:25,P:06,S:15}). The general theory of the relation $\ac$, called \emph{almost-compact embeddings}, between two Banach function spaces was studied in \cite{S:12} (cf.~\cite{F-MMP:10}).

Although the relation $\ac$ has proved to be very useful in questions involving compactness, there is an important catch in the approach described in the preceding paragraph. The issue is that there is no pair of Banach function spaces $Z$ and $Y$ such that $Z\ac Y$ when $\mu(\RR) = \infty$ (see~\cite[Theorem~4.5]{S:12}), which renders the approach futile when the underlying measure space has infinite measure. This is consistent with the general fact that compactness is much more delicate in function spaces over measure spaces of infinite measure. One of the obstacles absent when $\mu(\RR) < \infty$ is that ``mass can escape to infinity''\textemdash in more mathematical terms, pointwise a.e. convergence no longer implies convergence in measure. This phenomenon is the root of why the relation $\ac$ ceases to be useful when $\mu(\RR) = \infty$ (see~\cite[Theorem~3.1]{S:12}). An important example of a situation where we usually have compactness at our disposal when $\mu(\RR) < \infty$, but lose it when $\mu(\RR) = \infty$, is provided by Sobolev embeddings. For instance, the Sobolev embedding
\begin{equation}\label{intro:Sob_dom}
    W^{m,p}(\Omega) \hookrightarrow L^q(\Omega),
\end{equation}
 where $1\leq p < n/m$ and $\Omega\subseteq\rn$ is a sufficiently regular bounded domain, is compact if (and only if) $q < np/(n-mp)$. However, the corresponding embedding on the whole space,
 \begin{equation}\label{intro:Sob_rn}
     W^{m,p}(\rn) \hookrightarrow L^q(\rn),
 \end{equation}
is never compact. Nevertheless, compactness arguments remain useful, but they require more subtle analysis and additional considerations (see~\cite{L:84,L:84b,L:85}). For example, it is known (see~\cite{L:82,S:77}) that the restricted Sobolev embedding
\begin{equation}\label{intro:Sob_rad_rn}
    W^{m,p}_R(\rn) \hookrightarrow L^q(\rn),
\end{equation}
where $W^{m,p}_R(\rn)$ denotes the subspace of radially symmetric functions in $W^{m,p}(\rn)$, is compact if (and only if) $p < q < np/(n-mp)$. From the point of view of the procedure described in the preceding paragraph, the key difference between \eqref{intro:Sob_dom} and \eqref{intro:Sob_rn} (or \eqref{intro:Sob_rad_rn}) lies in the application of the relation $\ac$ at the end of the second step. A bounded sequence in $W^{m,p}$, regardless of whether the underlying domain has finite or infinite measure, always contains a subsequence converging pointwise~a.e. This follows from the classical Rellich-Kondrachov theorem. As for the space $Z$ used in the second step, one may in both cases take $Z = L^{np/(n-mp)}$\textemdash even better, one can replace the Lebesgue space $L^{np/(n-mp)}$ with the smaller Lorentz space $L^{np/(n-mp),p}$\textemdash but $L^{np/(n-mp)}(\Omega) \ac L^q(\Omega)$ holds for $q < np/(n-mp)$ only when the measure of $\Omega$ is finite. It is worth noting that replacing the Lebesgue space $L^{np/(n-mp)}$ with the strictly smaller Lorentz space $L^{np/(n-mp),p}$, or even with $L^p\cap L^{np/(n-mp),p}$, does not remedy the situation. Furthermore, the presence of compactness in \eqref{intro:Sob_rad_rn} and its absence in the unrestricted Sobolev embedding \eqref{intro:Sob_rn} indicate that one cannot hope to replace the relation $\ac$ with a single relation that would serve a similar role for function spaces over measure spaces of infinite measure.

The compactness of Sobolev embeddings on $\rn$ restricted to radially symmetric functions, in the general setting of rearrangement-invariant Banach function spaces, was recently studied and characterized in \cite{M:25preprint}. The characterization obtained there implicitly involves two new relations between function spaces, which we denote by $\acLoc$ and $\acInfty$. The aim of this paper is to provide the first systematic study of these relations. Moreover, we investigate them in the general framework of quasi-Banach function spaces (see~Section~\ref{sec:prel} for precise definitions), which need not be normable in general. Furthermore, the fact that we do not limit ourselves to rearrangement-invariant spaces leads us to introduce a new concept of extremal fundamental functions of (quasi-)Banach function spaces (see~Section~\ref{sec:extremal_fund_funcs}). While the classical concept of the fundamental function of a rearrangement-invariant space is well known and widely used, the corresponding notion of extremal fundamental functions, agreeing with the classical one in the rearrangement-invariant setting, does not appear in the existing literature and seems to be new, to the best of our knowledge.

The relation $\acLoc$ is a localized version of the relation $\ac$ studied in \cite{S:12}. Its definition is:
\begin{definition}
Let $X$ and $Y$ be quasi-Banach function spaces over $\RM$. We write $X\acLoc Y$ if
\begin{equation*}
    \lim_{a\to0_+}\sup_{\|f\|_X\leq 1} \sup_{\substack{E\subseteq\RR\\ \mu(E)\leq a}} \|f\chi_{E}\|_Y = 0.
\end{equation*}
\end{definition}
When $\mu(\RR)<\infty$, the relation $\acLoc$ coincides with $\ac$ (see~\cite[Lemma~5.1]{S:12}), but the situation changes dramatically when $\mu(\RR) = \infty$. For example,
\begin{align*}
    L^p\RM &\ac L^q\RM \quad \text{never holds when $\mu(\RR) = \infty$},\nonumber\\
    \intertext{but}
    L^p\RM &\acLoc L^q\RM \quad \text{holds if and only if $p > q$}.
\end{align*}
The key difference between $\ac$ and $\acLoc$ can be seen by comparing their characterizations in terms of convergence of functions \cite[Theorem~3.1]{S:12} and Theorem~\ref{thm:characterization_loc_ac_convergence}, respectively. Whereas the characterization of $\ac$ involves pointwise a.e.~convergence of functions (equivalently, it may be reformulated by means of local convergence in measure), our characterization of $\acLoc$ involves convergence of the measure of supports.

From now on, we assume that $\mu(\RR) = \infty$. The relation $\acInfty$ is defined as follows.
\begin{definition}
Let $X$ and $Y$ be quasi-Banach function spaces over $\RM$. We write $X\acInfty Y$ if
\begin{equation*}
    \lim_{a\to\infty}\sup_{\|f\|_X\leq 1} \inf_{\substack{E\subseteq\RR\\ \mu(E)\leq a}} \|f\chi_{\RR\setminus E}\|_Y = 0.
\end{equation*}
\end{definition}
The study of the relation $\acInfty$ is the main focus of this paper, as it is entirely new, to the best of our knowledge. For instance, to give the reader some intuition,
\begin{equation*}
    L^p\RM \acInfty L^q\RM \quad \text{holds if and only if $p < q$}.
\end{equation*}
The relations $\acInfty$ and $\acLoc$ are investigated in Section~\ref{sec:uniform_decay}. In Section~\ref{sec:endpoints}, we restrict our attention to rearrangement-invariant Banach function spaces and study the relation $\acInfty$ in connection with the so-called endpoint spaces, which play an important role in the theory of rearrangement-invariant spaces. Section~\ref{sec:examples} contains several concrete examples and applications. In particular, we characterize there the validity of $\acInfty$ for pairs of Lorentz spaces and pairs of Orlicz spaces. In addition, we prove a general compactness principle (see~Theorem~\ref{thm:Lions_compactness_lemma}), which may be viewed as an abstract variant of Lion's compactness lemma \cite[Lemma~I.1]{L:84b}, illustrating how the relations $\acLoc$ and $\acInfty$ together provide a tool for establishing compactness. Finally, we apply the general compactness principle to (inhomogeneous) Sobolev spaces on $\rn$ to demonstrate a concrete application (see~Theorem~\ref{thm:application_Lions_compactness_inhom_Sob}).


\section{Preliminaries}\label{sec:prel}
The theory of (rearrangement-invariant) Banach function spaces used in this paper follows \cite{BS}. As noted in \cite{LN:24}, many results can still be proved under milder assumptions. Nonetheless, we follow \cite{BS} to avoid unnecessary complications in establishing the fundamental theory of the new relation $\acInfty$, as well as of $\acLoc$, introduced here. While the classical theory of (rearrangement-invariant) Banach function spaces is well developed, the corresponding theory for (rearrangement-invariant) quasi-Banach function spaces is comparatively less explored. Recent works \cite{MNPT:25, NP:24} extended key results from \cite{BS} to the quasi-normable setting, and we follow their approach here.

Throughout the paper $\RM$ denotes a $\sigma$-finite, nonatomic measure space with $\mu(\RR)=\infty$, unless stated otherwise. We denote by $\MRM$ the set of $\mu$-measurable functions on $\RR$ and by $\Mpl\RM$ its subset consisting of those functions that are nonnegative $\mu$-a.e. We also denote by $\MOR$ the set of functions from $\MRM$ that are finite $\mu$-a.e.

A functional $\|\cdot\|_X\colon  \Mpl\RM \to [0,\infty]$ is called a \emph{quasi-Banach function norm} if, for all $f$, $g$, and $\{f_j\}_{j=1}^\infty$ in $\Mpl\RM$ the following properties hold:
\begin{enumerate}[(P1)]
\item $\|f\|_X = 0$ if and only if $f = 0$ $\mu$-a.e.; 
      $\|\lambda f\|_X = \lambda \|f\|_X$ for every $\lambda \ge 0$; there is a constant $C_X \in [1,\infty)$, independent of $f$ and $g$, such that
      \begin{equation}\label{prel:P1_quasi_triangle}
          \|f + g\|_X \le C_X\big( \|f\|_X + \|g\|_X \big);
      \end{equation}
\item $f \leq g$ $\mu$-a.e.\  implies $\|f\|_X\leq\|g\|_X$;
\item $f_j \nearrow f$ $\mu$-a.e.\ implies $\|f_j\|_X \nearrow \|f\|_X$;
\item $\|\chi_E\|_X < \infty$ for every $E\subseteq R$ with $\mu(E) < \infty$.
\end{enumerate}
Given a quasi-Banach function norm $\|\cdot\|_X$, we extend it to all functions from $\MRM$ by setting
\begin{equation*}
    \|f\|_X = \||f|\|_{X},\ f\in \MRM.
\end{equation*}
Then $\|\cdot\|_X$ is a quasi-norm on the linear set
\begin{equation*}
X=\{f\in\MRM: \|f\|_X<\infty\},
\end{equation*}
and $X$ endowed with $\|\cdot\|_X$ is a quasi-Banach space, which we call a \emph{quasi-Banach function space}. We will often write \emph{q-BFS} for short. In what follows, we assume that $C_X$ is the least constant for which \eqref{prel:P1_quasi_triangle} holds. Therefore, $\|\cdot\|_X$ is a norm on $X$ if and only if $C_X = 1$. Every q-BFS $X$ satisfies
\begin{equation*}
    \{f\in\MOR\colon \text{$f$ is a simple function}\} \subseteq X \subseteq \MOR,
\end{equation*}
and the second inclusion is continuous when the space $\MOR$ is equipped with the (metrizable) topology of local convergence in measure. A simple function is a (finite) linear combination of characteristic functions of $\mu$-measurable sets of finite measure.

We say that a quasi-Banach function norm $\|\cdot\|_X\colon  \Mpl\RM \to [0,\infty]$ is a \emph{Banach function norm} if \eqref{prel:P1_quasi_triangle} holds with $C_X=1$ and, in addition, it satisfies
\begin{enumerate}[(P5)]
    \item for every set $E\subseteq R$ with $\mu(E) < \infty$, there exists a finite constant $C_{E,X}>0$ such that
		\begin{equation*}
			\int_E f \dd{\mu} \leq C_{E,X} \|f\|_X \quad \text{for every $f\in \Mpl\RM$}.
		\end{equation*}
\end{enumerate}
In particular, $\|\cdot\|_X$ is a norm on $X$, and the resulting Banach space $X$ is called a \emph{Banach function space}. We will often write \emph{BFS} for short. Note that a quasi-Banach function norm need not satisfy (P5), whereas (P5) is part of the definition of a Banach function norm.

With any quasi-Banach function norm $\|\cdot\|_X$, we associate another functional, $\|\cdot\|_{X'}$, defined by
\begin{equation}\label{prel:asoc_norm}
\|g\|_{X'}=\sup_{\|f\|_X\leq1}\int_{\RR} |f(x)||g(x)| \dd{\mu(x)},\ g\in\MRM.
\end{equation}
An immediate consequence of the definition of $\|\cdot\|_{X'}$ is the H\"older-type inequality
\begin{equation}\label{prel:Holder}
    \int_{\RR} |f(x)||g(x)| \dd{\mu(x)} \leq \|f\|_X \|g\|_{X'} \quad \text{for all $f,g\in\MRM$}.
\end{equation}
When $\|\cdot\|_X$ does not satisfy property~(P5), there exists a set
$E \subseteq \RR$ with $0 < \mu(E) < \infty$ such that $\|\chi_E\|_{X'} = \infty$.
Consequently, $\|\cdot\|_{X'}$ does not have property~(P4). In fact, it may happen that $\|g\|_{X'} = \infty$ for every $g\in\Mpl\setminus\{0\}$. For example,
this occurs for the Lebesgue space $X = L^p$ with $p \in (0,1)$ or the Lorentz
space $L^{1,q}$ with $q \in (1,\infty]$ (see Section~\ref{sec:examples} for more information on Lorentz spaces). On the other hand, when a quasi-Banach function norm $\|\cdot\|_X$ satisfies property (P5), the functional $\|\cdot\|_{X'}$ is a Banach function norm, and the resulting Banach function space $X'$ is called the \emph{associate space} of $X$. In turn, the second associate space $X'' = (X')'$ is also a Banach function space, and we always have
\begin{equation}\label{prel:X_into_second_asoc}
    \|f\|_{X''} \leq \|f\|_{X} \quad \text{for every $f\in\MRM$}.
\end{equation}
When $\|\cdot\|_X$ is a Banach function norm, \eqref{prel:X_into_second_asoc} becomes an equality. In other words, if $X$ is a BFS, then
\begin{equation}\label{prel:X_is_double_assoc}
    X'' = X \quad \text{with equal norms}.
\end{equation}

An important subclass of (quasi-)Banach function spaces is formed by the
rearrangement-invariant ones. If $(R,\mu)$ and $(S,\nu)$ are two (possibly different) measure spaces, we say that functions $f\in \M(R,\mu)$ and $g\in\M(S,\nu)$ are \textit{equimeasurable} if
\begin{equation*}
    \mu\big( \{x\in\RR: |f(x)| > \lambda\} \big) = \nu\big( \{x\in S: |g(x)| > \lambda\} \big) \quad \text{for all $\lambda>0$}.
\end{equation*}
We say that a quasi-Banach function norm $\|\cdot\|_X$ is \emph{rearrangement invariant} if it satisfies
\begin{enumerate}[(P6)]
\item $\|f\|_X = \|g\|_X$ whenever $f,g\in\Mpl\RM$ are equimeasurable.
\end{enumerate}
The resulting (quasi-)Banach function space is called a \emph{rearrangement-invariant (quasi-)Banach function space}. We will often write \emph{r.i.~q-BFS} or \emph{r.i.~BFS} for short.

Note that, to verify that an r.i.~q-BFS is a r.i.~BFS, it suffices to check that $\|\cdot\|_X$ is a norm, that is, \eqref{prel:P1_quasi_triangle} holds with $C_X = 1$. The fact that $X$ also satisfies property~(P5) then follows (see, e.g.,~\cite[Chapter~2, Theorem~4.1]{KPS} or \cite[Proposition~2.22]{MNPT:25}). 

If $X$ is an r.i.~q-BFS that satisfies property~(P5), then its associate space is an r.i.~BFS. In particular, the associate space of an r.i.~BFS is also an r.i.~BFS.

Textbook examples of r.i.~BFSs include
Lebesgue spaces $L^{p}$ with $p\in[1,\infty]$, Orlicz spaces, and Lorentz
spaces $L^{p,q}$ for suitable values of $p,q\in[1,\infty]$ (see
Section~\ref{sec:examples} for more information). The Lorentz spaces $L^{1,q}$
with $q\in(0,1)$ are examples of rearrangement-invariant (non-normable)
quasi-Banach function spaces that satisfy property~(P5). In contrast, the
Lebesgue spaces $L^p$ with $p\in(0,1)$ and the Lorentz spaces $L^{1,q}$
with $q\in(1,\infty]$ are rearrangement-invariant (non-normable) quasi-Banach
function spaces that do not satisfy property~(P5). Variable exponent Lebesgue
spaces $L^{p(\cdot)}$ and suitable weighted Lebesgue spaces provide examples of
(quasi-)Banach function spaces that are not rearrangement invariant (see, e.g.,~\cite{DHHR} or Remark~\ref{rem:qBFS_that_is_not_admissible}).

The \emph{nonincreasing rearrangement} $f^* \colon  [0,\infty) \to [0, \infty]$ of a function $f\in \M(R,\mu)$  is defined by
\begin{equation*}
 f^*(t) = \inf\big\{ \lambda\geq0\colon \mu(\{x\in \RR: |f(x)|>\lambda\})\leq t \big\},\ t\in[0,\infty).
\end{equation*}
The function $f^*$ is nonincreasing, right-continuous, and equimeasurable with $f$. The \emph{maximal nonincreasing rearrangement} $f^{**} \colon  (0,\infty) \to [0, \infty]$ of a function $f\in \M(R,\mu)$ is defined by
\begin{equation*}
 f^{**}(t) = \frac1{t}\int_0^t f^*(s) \dd{s},\ t \in (0,\infty).
\end{equation*}
The function $f^{**}$ is nonincreasing and continuous. Unlike $f^*$, it need not be equimeasurable with $f$. We always have $f^*\leq f^{**}$ and $f^*(0) = f^{**}(0_+) = \|f\|_{L^\infty\RM}$. Moreover, $f^{**}$ admits the representation
\begin{equation*}
    f^{**}(t)  = \frac1{t} \sup_{\substack{E\subseteq \RR\\\mu(E) = t}} \int_E |f| \dd{\mu},\ t \in (0,\infty).
\end{equation*}

For every r.i.~q-BFS $X$ over $(\RR,\mu)$, there exists a unique r.i.~q-BFS $\widebar{X}$ over the interval $(0,\infty)$ endowed with the one-dimensional Lebesgue measure such that
\begin{equation*}
    \|f\|_X = \|f^*\|_{\widebar{X}} \quad \text{for every $f\in\MRM$}.
\end{equation*}
The space $\widebar{X}$ is called the \emph{representation space} of $X$.
Moreover, $C_X = 1$ if and only if $C_{\widebar{X}} = 1$, and $X$ satisfies
property~(P5) if and only if $\widebar{X}$ does. In particular, the
representation space of an r.i.~BFS is an r.i.~BFS. In the setting of r.i.~BFSs, this is a
classical result (see, e.g.,~\cite[Chapter~2, Theorem~4.10]{BS}). Recently, it was extended to the more general setting of r.i.~q-BFSs in \cite[Section~3]{MNPT:25}. When $\RR=(0,\infty)$ and $\mu$ is the Lebesgue measure, $X$ coincides with its representation space, with equal norms.

An important function associated with each r.i.~q-BFS $X$ is its \emph{fundamental function} $\fund_X\colon[0, \infty) \to [0, \infty)$,  defined by
\begin{equation*}
    \fund_X(t) = \|\chi_E\|_{X},\ t\in[0,\infty),
\end{equation*}
where $E\subseteq \RR$ is any $\mu$-measurable set with $\mu(E) = t$. The rearrangement invariance of $X$ ensures that $\fund_X$ is well defined, and it can be readily verified that $\fund_X = \fund_{\widebar{X}}$. Furthermore, the elementary pointwise estimate
\begin{equation}\label{prel:fund_ineq}
    f^*(t) \leq \frac{\|f\|_X}{\fund_X(t)} \quad \text{for all $t\in(0, \infty)$ and $f\in\MRM$}
\end{equation}
is often useful (cf.~\cite{H}).

The fundamental function of an r.i.~BFS is a \emph{quasiconcave} function. A function $\varphi\colon [0, \infty) \to [0, \infty)$ is called quasiconcave if it is nondecreasing, vanishes only at $0$, and the function $(0, \infty) \ni t \mapsto \varphi(t)/t$ is nonincreasing.

\section{Extremal fundamental functions}\label{sec:extremal_fund_funcs}
A general (quasi-)Banach function space does not possess a single, well-defined fundamental function, because the norm of characteristic functions of sets with prescribed measure may vary. Nevertheless, one can still define two \emph{extremal fundamental functions}
serving as a certain substitute. Even when $X$ is not rearrangement invariant, these
extremal fundamental functions, introduced below, retain some useful
information about the space, as we will see later. To the best of our
knowledge, this notion does not appear in the existing literature.

\begin{definition}
    Let $X$ be a quasi-Banach function space. The function $\fundMin_X\colon [0, \infty) \to [0, \infty)$ defined by
    \begin{equation*}
        \fundMin_X(t) = \inf_{\mu(E) = t} \|\chi_E\|_X,\ t\in[0, \infty),
    \end{equation*}
    is called the \emph{minimal fundamental function} of $X$. The function $\fundMax_X\colon [0, \infty) \to [0, \infty)$ defined by
    \begin{equation*}
        \fundMax_X(t) = \sup_{\mu(E) = t} \|\chi_E\|_X,\ t\in[0, \infty),
    \end{equation*}
    is called the \emph{maximal fundamental function} of $X$. Finally, we say that $X$ is \emph{admissible} if the minimal fundamental function vanishes only at the origin, that is,
    \begin{equation*}
        \fundMin_X(t) > 0 \quad \text{for every $t\in(0,\infty)$}.
    \end{equation*}
\end{definition}
The functions $\fundMin_X$ and $\fundMax_X$ are nondecreasing, finite, and vanishing at the origin. Moreover, we clearly always have $\fundMin\leq\fundMax$. Their finiteness is immediate when $X$ is rearrangement invariant, since in this case
\begin{equation*}
	\fundMin_X(t) = \fundMax_X(t) = \fund_X(t) < \infty \quad \text{for every $t \in [0,\infty)$}
\end{equation*}
by property~(P4). Furthermore, every r.i.~q-BFS $X$ is admissible, since $\fundMin_X(t) = \fundMax_X(t) = \fund_X(t) > 0$ for every $t\in(0, \infty)$. For general (quasi-)Banach function spaces that are not rearrangement invariant, the finiteness of extremal fundamental functions is less obvious. Nevertheless, it still holds. For Banach function spaces, the finiteness of $\fundMax_X$ (and therefore also of $\fundMin_X$) follows from \cite[Lemma~4.4]{S:12}, and the argument there can be adapted to the quasi-Banach setting. Moreover, \cite[Lemma~4.4]{S:12} also implies that every Banach function space is admissible.  Finally, since every BFS is admissible, we observe that every q-BFS satisfying property (P5) is also admissible.

\begin{proposition}\label{prop:quasi_BFS_with_P5_admissible}
    Every q-BFS that satisfies property~(P5) is admissible. 
\end{proposition}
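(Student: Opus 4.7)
The plan is to reduce admissibility of $X$ to the already-granted admissibility of every Banach function space, using the second associate space as a bridge. Given a q-BFS $X$ satisfying property~(P5), I would first invoke the discussion preceding Section~\ref{sec:extremal_fund_funcs}: property~(P5) makes $X'$ a Banach function space, and hence its associate $X''=(X')'$ is a Banach function space as well. Next, I would recall the general inequality \eqref{prel:X_into_second_asoc}, which in particular yields $\|\chi_E\|_{X''} \leq \|\chi_E\|_X$ for every $\mu$-measurable set $E\subseteq \RR$.

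Since $X''$ is a BFS, it is admissible by the fact already recorded immediately before the proposition (and attributed to \cite[Lemma~4.4]{S:12}); that is, $\fundMin_{X''}(t) > 0$ for every $t\in(0,\infty)$. Chaining the two bounds, for every $E$ with $\mu(E)=t$ we obtain
\[
\|\chi_E\|_X \;\geq\; \|\chi_E\|_{X''} \;\geq\; \fundMin_{X''}(t) \;>\; 0,
\]
and taking the infimum over all such $E$ produces $\fundMin_X(t) \geq \fundMin_{X''}(t) > 0$, as required.

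I do not anticipate any genuine obstacle. The whole argument is a short chain whose only ingredients are (i) the fact, stated earlier in the paper, that (P5) upgrades $X'$ (and therefore $X''$) to a Banach function space, (ii) the contractive inclusion $\|\cdot\|_{X''}\leq\|\cdot\|_X$ from \eqref{prel:X_into_second_asoc}, and (iii) the already-granted admissibility of Banach function spaces. No new technical lemma is needed, which is consistent with the word ``observe'' used in the paragraph preceding the proposition.
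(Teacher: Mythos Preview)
Your proposal is correct and follows essentially the same approach as the paper's proof: pass to the second associate space $X''$, which is a BFS (hence admissible), and use the inequality \eqref{prel:X_into_second_asoc} to conclude $\fundMin_X(t)\geq\fundMin_{X''}(t)>0$.
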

\begin{proof}
We need to show that
    \begin{equation*}
        \inf_{\mu(E) = t} \|\chi_E\|_X = \fundMin_X(t) > 0 \quad \text{for every $t\in(0, \infty)$}.
    \end{equation*}
    Since $X$ satisfies property~(P5), its associate space $X'$ is a BFS. Hence, since BFSs are admissible, using \eqref{prel:X_into_second_asoc}, we obtain
    \begin{equation*}
        \fundMin_{X}(t) \geq \fundMin_{X''}(t) > 0 \quad \text{for every $t\in(0, \infty)$}. \qedhere
    \end{equation*}
\end{proof}

\begin{remark}\label{rem:qBFS_that_is_not_admissible}
    When $X$ is a q-BFS that does not satisfy property~(P5), it may happen that
    \begin{equation*}
        \fundMin_X(t) = 0 \quad \text{for every $t\in(0, \infty)$}.
    \end{equation*}
    For example, let $X=L^1_w(0, \infty)$ be the weighted Lebesgue space corresponding to the function norm
    \begin{equation*}
        \|f\|_X = \int_0^\infty f(t)\min\{1, t^{-2}\} \dd{t},\ f\in\Mpl(0, \infty).
    \end{equation*}
    It can be readily verified that the functional $\|\cdot\|_X$ satisfies all properties of a Banach function norm except for property~(P5). In particular, $X$ is a q-BFS. However, for every $n\in\N$ and $t\in(0, \infty)$, we have
    \begin{equation*}
        \fundMin_X(t) \leq \|\chi_{(n,n+t)}\|_X = \frac1{n} - \frac1{n+t} \rightarrow0 \quad \text{as $n\rightarrow\infty$}.
    \end{equation*}
    Thus $\fundMin_X(t) = 0$ for every $t\in(0, \infty)$. On the other hand, there exist q-BFSs that do not satisfy property~(P5) but are admissible nonetheless, since every r.i.~q-BFS is admissible. For concrete examples, consider the Lorentz spaces $L^{1,q}$ with $q \in (1,\infty]$ or the Lebesgue spaces $L^p$ with $p \in (0,1)$.
\end{remark}

The fundamental functions of an r.i.~BFS $X$ and its associate space $X'$ satisfy
\begin{equation}\label{prel:fund_X_and_asocX_for_ri}
    \fund_X(t) \fund_{X'}(t) = t \quad \text{for every $t\in[0, \infty)$},
\end{equation}
a relation that plays an important role in the theory of rearrangement-invariant Banach function spaces. The next proposition provides a substitute for this identity in the setting of general (quasi-)Banach function spaces. In order to ensure that $X'$ is nontrivial, we assume that $X$ satisfies property~(P5). Before stating the result, we introduce an auxiliary notion. For each $E\subseteq\RR$ such that $0<\mu(E)<\infty$, we define the averaging operator $A_E\colon L^1_{loc}\RM \to (L^1\cap L^\infty)\RM$ by
\begin{equation*}
    A_Ef = \Big( \frac1{\mu(E)}\int_E f \dd{\mu} \Big) \chi_E,\ f\in L^1_{loc}\RM.
\end{equation*}
Now, let $X$ be a q-BFS that satisfies property~(P5). By the H\"{o}lder inequality \eqref{prel:Holder} and the nontriviality of $X'$, each $A_E$ is bounded on $X$.  We say that \emph{the family of simple averaging operators $\{A_E\}_E$ is uniformly bounded on $X$} if
\begin{equation*}
    \sup_{\substack{E\subseteq\RR\\0<\mu(E) < \infty}} \sup_{\|f\|_X\leq1} \|A_Ef\|_{X} < \infty.
\end{equation*}
It follows from the uniform boundedness principle that the family of simple averaging operators $\{A_E\}_E$ is uniformly bounded on $X$ if and only if
\begin{equation*}
    \sup_{\substack{E\subseteq\RR\\0<\mu(E) < \infty}} \|A_Ef\|_{X} < \infty \quad \text{for every $f\in X$ with $\|f\|_X\leq1$}.
\end{equation*}

\begin{proposition}\label{prop:E:fund_X_and_X'_quasi}
    Let $X$ be a q-BFS that satisfies property~(P5).
\begin{enumerate}[(i)]
\item We have
\begin{equation}\label{E:fund_X_and_X'_quasi:at_least_identity}
        t\leq \min\big\{ \fundMin_X(t)\fundMax_{X'}(t), \fundMax_X(t)\fundMin_{X'}(t) \big\}  \quad\text{for every $t\in(0, \infty)$}.
\end{equation}
\item Assume that the family of simple averaging operators $\{A_E\}_E$ is uniformly bounded on $X$. Then there exists a constant $C>0$ such that
\begin{equation}\label{E:fund_X_and_X'_quasi:at_most_identity}
    \max\big\{ \fundMin_X(t)\fundMax_{X'}(t), \fundMax_X(t)\fundMin_{X'}(t) \big\} \leq C t \quad \text{for every $t\in(0, \infty)$}.
\end{equation}
\end{enumerate}
\end{proposition}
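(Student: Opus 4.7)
The plan is to derive both parts from the Hölder-type inequality \eqref{prel:Holder} applied to characteristic functions, using the uniform boundedness of averaging operators in part (ii) to obtain a converse direction.

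For part (i), I would fix an arbitrary set $E \subseteq \RR$ with $\mu(E) = t \in (0, \infty)$ and apply \eqref{prel:Holder} with $f = g = \chi_E$ to get
\begin{equation*}
    t = \int_{\RR} \chi_E \chi_E \dd{\mu} \leq \|\chi_E\|_X \|\chi_E\|_{X'}.
\end{equation*}
Since $\|\chi_E\|_{X'} \leq \fundMax_{X'}(t)$, this gives $t \leq \|\chi_E\|_X \fundMax_{X'}(t)$; taking the infimum over all admissible $E$ yields the first half of \eqref{E:fund_X_and_X'_quasi:at_least_identity}. Analogously, using $\|\chi_E\|_X \leq \fundMax_X(t)$ and then taking the infimum over $E$ in $\|\chi_E\|_{X'}$ gives the second half.

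For part (ii), the main step is to invert the Hölder bound up to a constant. Let $K$ denote the uniform bound of the family $\{A_E\}_E$ on $X$. For $f \in X$ with $f \geq 0$ and $\|f\|_X \leq 1$, and any $E$ with $0 < \mu(E) < \infty$, the function $A_E f$ is a nonnegative scalar multiple of $\chi_E$, so the hypothesis gives
\begin{equation*}
    \frac{1}{\mu(E)}\Big(\int_E f \dd{\mu}\Big) \|\chi_E\|_X = \|A_E f\|_X \leq K.
\end{equation*}
By Proposition~\ref{prop:quasi_BFS_with_P5_admissible}, $X$ is admissible, so $\|\chi_E\|_X > 0$, and we can rearrange to obtain $\int_E f \dd{\mu} \leq K\mu(E)/\|\chi_E\|_X$. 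Taking the supremum over such $f$ and using that $\|\chi_E\|_X = \||\chi_E|\|_X$ gives the key reverse Hölder bound
\begin{equation*}
    \|\chi_E\|_X \|\chi_E\|_{X'} \leq K\mu(E) \quad \text{for every $E$ with $0 < \mu(E) < \infty$}.
\end{equation*}

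With this in hand, both inequalities in \eqref{E:fund_X_and_X'_quasi:at_most_identity} follow by the same optimization argument as in (i), but in the reverse direction: fix $\mu(E) = t$, replace $\|\chi_E\|_X$ by $\fundMin_X(t)$ on the left to obtain $\|\chi_E\|_{X'} \leq Kt/\fundMin_X(t)$, then take the supremum over $E$ to get $\fundMin_X(t)\fundMax_{X'}(t) \leq Kt$; the symmetric argument (replacing $\|\chi_E\|_{X'}$ by $\fundMin_{X'}(t)$) yields $\fundMax_X(t)\fundMin_{X'}(t) \leq Kt$. Thus $C = K$ works. The only conceptual step is the reverse Hölder bound from averaging; everything else is extremal-value bookkeeping.
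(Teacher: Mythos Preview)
Your proposal is correct and follows essentially the same approach as the paper: part~(i) is the H\"older inequality applied to characteristic functions plus an infimum over $E$, and part~(ii) is the uniform bound on $\{A_E\}_E$ turned into the reverse estimate $\|\chi_E\|_X\|\chi_E\|_{X'}\le K\mu(E)$, followed by the same min/max bookkeeping. One minor presentational difference: by isolating the reverse-H\"older bound $\|\chi_E\|_X\|\chi_E\|_{X'}\le K\mu(E)$ as a standalone inequality, you get the second estimate $\fundMax_X(t)\fundMin_{X'}(t)\le Kt$ directly by replacing $\|\chi_E\|_{X'}$ with $\fundMin_{X'}(t)$ and taking the supremum, whereas the paper approximates $\fundMax_X(t)$ by $(1+\varepsilon)\|\chi_E\|_X$ for a well-chosen $E$ and lets $\varepsilon\to0$; both routes reach the same conclusion with the same constant.
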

\begin{proof}
Since $X$ satisfies property~(P5), both $X$ and $X'$ are admissible by Proposition~\ref{prop:quasi_BFS_with_P5_admissible} and the fact that $X'$ is a BFS. The inequality \eqref{E:fund_X_and_X'_quasi:at_least_identity} follows directly from the H\"{o}lder inequality \eqref{prel:Holder}. Indeed, fixing $t\in(0, \infty)$ and $E\subseteq\RR$ with $\mu(E) = t$, we obtain
    \begin{equation*}
        t = \int_{\RR} \chi_E \dd{\mu} \leq \|\chi_E\|_X \|\chi_E\|_{X'} \leq \min\{\|\chi_E\|_X \fundMax_{X'}(t), \fundMax_X(t) \|\chi_E\|_{X'} \},
    \end{equation*}
    and taking the infimum over all such $E$ yields \eqref{E:fund_X_and_X'_quasi:at_least_identity}. For \eqref{E:fund_X_and_X'_quasi:at_most_identity}, assume that the family of simple averaging operators $\{A_E\}_E$ is uniformly bounded on $X$. Thus there exists $C>0$ such that
    \begin{equation*}
        C = \sup_{\substack{E\subseteq\RR\\0<\mu(E) < \infty}} \sup_{\|f\|_X\leq1} \|A_Ef\|_{X} < \infty.
    \end{equation*}
    Fix $t>0$ and $E\subseteq\RR$ with $\mu(E) = t$. Using \eqref{prel:asoc_norm}, we have
    \begin{align*}
        \|\chi_E\|_{X'} &= \sup_{\|f\|_{X}\leq1} \int_E |f| \dd{\mu} = \frac{t}{\fundMin_X(t)} \sup_{\|f\|_{X}\leq1} \Big( \frac1{t} \int_E |f| \dd{\mu} \Big) \fundMin_X(t) \\
        &\leq \frac{t}{\fundMin_X(t)} \sup_{\|f\|_{X}\leq1} \Big( \frac1{t} \int_E |f| \dd{\mu} \Big) \|\chi_E\|_X =  \frac{t}{\fundMin_X(t)} \sup_{\|f\|_{X}\leq1} \|A_E(|f|)\|_X \\
        &\leq C \frac{t}{\fundMin_X(t)}.
    \end{align*}
    It follows that
    \begin{equation*}
        \fundMin_X(t) \fundMax_{X'}(t) \leq Ct \quad \text{for every $t\in(0, \infty)$}.
    \end{equation*}
    Finally, fix an arbitrary $\varepsilon > 0$ and choose $E\subseteq\RR$ with $\mu(E) = t$ such that
    \begin{equation*}
        \fundMax_X(t) \leq (1+\varepsilon)\|\chi_E\|_X.
    \end{equation*}
    Using a similar argument, one obtains
    \begin{equation*}
        \fundMin_{X'}(t) \leq \|\chi_E\|_{X'} \leq (1+\varepsilon)C \frac{t}{\fundMax_X(t)}.
    \end{equation*}
    It follows that
    \begin{equation*}
        \fundMax_X(t) \fundMin_{X'}(t) \leq Ct \quad \text{for every $t\in(0, \infty)$},
    \end{equation*}
    which completes the proof.
\end{proof}

\begin{remark}\label{rem:simple_aver_unif_bd_on_ri}
    The identity \eqref{prel:fund_X_and_asocX_for_ri} together with the H\"{o}lder inequality \eqref{prel:Holder} immediately implies that the family of simple averaging operators $\{A_E\}_E$ is uniformly bounded on every r.i.~BFS $X$ with constant $1$ (cf.~\cite[Chapter~2, Theorem~4.8]{BS}). Therefore, Proposition~\ref{prop:E:fund_X_and_X'_quasi} recovers the identity \eqref{prel:fund_X_and_asocX_for_ri}.
\end{remark}

When $X$ is rearrangement invariant, its extremal fundamental functions coincide with the classical fundamental function, that is, $\fundMin_X = \fundMax_X = \fund_X$, and \eqref{E:fund_X_and_X'_quasi:at_most_identity} reads as
\begin{equation}\label{E:fund_X_and_X'_quasi:at_most_identity_for_ri}
        \fund_X(t) \fund_{X'}(t) \leq Ct \quad \text{for every $t\in(0, \infty)$}.
\end{equation}
When \eqref{E:fund_X_and_X'_quasi:at_most_identity_for_ri} holds, the H\"{o}lder inequality \eqref{prel:Holder} together with the fact that $\fundMin_X = \fundMax_X = \fund_X$ implies that the family of simple averaging operators is uniformly bounded on $X$. By combining this observation with Proposition~\ref{prop:E:fund_X_and_X'_quasi}, we obtain the following characterization of the validity of \eqref{E:fund_X_and_X'_quasi:at_most_identity_for_ri}. Interestingly, a different characterization was recently obtained in \cite[Corollary~4.22]{MNPT:25}.
\begin{corollary}\label{cor:E:fund_X_and_X'_quasi_ri}
        Let $X$ be an r.i.~q-BFS satisfying property~(P5). Then there exists a constant $C>0$ such that
        \begin{equation*}
            t\leq \fund_X(t)\fund_{X'}(t) \leq Ct \quad \text{for every $t\in(0, \infty)$}
        \end{equation*}
        if and only if the family of simple averaging operators $\{A_E\}_E$ is uniformly bounded on $X$.
    \end{corollary}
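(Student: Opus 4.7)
The plan is to prove the two implications separately, with the forward direction essentially extracted from Proposition~\ref{prop:E:fund_X_and_X'_quasi} and the backward direction following from a direct computation with the Hölder inequality. Since $X$ is rearrangement invariant, the extremal fundamental functions of $X$ collapse to the single fundamental function $\fund_X$, and the same holds for $X'$ (which, by property~(P5), is an r.i.\ BFS). This collapse is what makes the two inequalities in the displayed equivalence actually equivalent to the two inequalities appearing in Proposition~\ref{prop:E:fund_X_and_X'_quasi}.

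For the forward implication (uniform boundedness of $\{A_E\}_E$ implies the bounds on $\fund_X\fund_{X'}$), I would simply invoke Proposition~\ref{prop:E:fund_X_and_X'_quasi}. Part~(i) gives the lower bound $t\leq\fund_X(t)\fund_{X'}(t)$ unconditionally (after rewriting $\fundMin_X = \fundMax_X = \fund_X$ and likewise for $X'$), while part~(ii), under the uniform boundedness assumption, yields the upper bound $\fund_X(t)\fund_{X'}(t)\leq Ct$ with the very same constant.

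For the backward implication, fix $f\in X$ with $\|f\|_X\leq 1$ and $E\subseteq R$ with $0<\mu(E)<\infty$, and estimate
\begin{equation*}
\|A_E f\|_X = \frac{1}{\mu(E)}\left|\int_E f\dd\mu\right|\|\chi_E\|_X \leq \frac{1}{\mu(E)}\|f\|_X\|\chi_E\|_{X'}\|\chi_E\|_X,
\end{equation*}
where the inequality uses \eqref{prel:Holder}. By rearrangement invariance, $\|\chi_E\|_X = \fund_X(\mu(E))$ and $\|\chi_E\|_{X'} = \fund_{X'}(\mu(E))$, so the hypothesized upper bound $\fund_X(t)\fund_{X'}(t)\leq Ct$ (applied at $t=\mu(E)$) yields $\|A_E f\|_X\leq C$, uniformly in $f$ and $E$.

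I do not anticipate any serious obstacle here; both directions are essentially bookkeeping. The only mild subtlety is to note that property~(P5) is needed at two points: first to ensure that $X'$ is a genuine BFS so that $\fund_{X'}$ is well defined and finite, and second to guarantee that the averaging operators $A_E$ are well defined as bounded operators on $X$ in the first place (via the Hölder inequality applied with the nontrivial $X'$). Once these are in place, the argument is immediate, and the result is as stated.
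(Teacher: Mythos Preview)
Your proposal is correct and matches the paper's own argument essentially verbatim: the paper also derives the forward direction from Proposition~\ref{prop:E:fund_X_and_X'_quasi} (after collapsing the extremal fundamental functions in the rearrangement-invariant case) and obtains the backward direction from the H\"older inequality~\eqref{prel:Holder} exactly as you compute. The only difference is that the paper leaves the backward computation implicit, whereas you spell it out.
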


    \begin{remark}
        When $X$ is an r.i.~q-BFS satisfying property~(P5), a sufficient condition for the uniform boundedness of the family of simple averaging operators is the boundedness of the mapping $f \mapsto f^{**}$ on the representation space $\widebar{X}$. This condition is not necessary, since the family of simple averaging operators is uniformly bounded on every r.i.~BFS\textemdash for example, on $L^1$. 
        
        Furthermore, there exist r.i.~q-BFSs satisfying property~(P5) which are not normable and on which the family of simple averaging operators is uniformly bounded, while $f\mapsto f^{**}$ is not bounded. A simple example is the Lorentz space $X = L^{1,q}$ with $q \in (0,1)$. Then $\fund_X(t) = c_q t$, $t\in(0, \infty)$, $X' = L^\infty$, up to equivalent norms, and one can readily verify, using Corollary~\ref{cor:E:fund_X_and_X'_quasi_ri}, that the family of simple averaging operators is uniformly bounded on $X$.
    \end{remark}

\section{Uniform decay of functions}\label{sec:uniform_decay}
In this section, we study some fundamental properties of the relations $\acLoc$ and $\acInfty$. We start with a useful necessary condition for the validity of $X\acLoc Y$ or $X\acInfty Y$, involving the extremal fundamental functions.
\begin{proposition}\label{prop:necessaryCond_fundFuncs_decay}
Let $X$ and $Y$ be q-BFSs, with $X$ being admissible.
\begin{enumerate}[(i)]
    \item If $X\acInfty Y$, then
\begin{equation}\label{E:necessaryCond_fundFuncs_decay:infty}
    \lim_{a\to \infty} \frac{\fundMin_Y(a)}{\fundMin_X(2a)} = 0.
\end{equation}
In particular, we necessarily have $\lim_{a\to \infty} \fundMin_X(a) = \infty$.
\item If $X\acLoc Y$, then
\begin{equation}\label{E:necessaryCond_fundFuncs_decay:zero}
    \lim_{a\to 0_+} \frac{\fundMax_Y(a)}{\fundMax_X(a)} = 0.
\end{equation}
In particular, we necessarily have $\lim_{a\to 0_+} \fundMax_Y(a) = 0$.
\end{enumerate}
\end{proposition}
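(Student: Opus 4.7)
My plan is to obtain both inequalities by testing the defining suprema of $\acInfty$ and $\acLoc$ against normalised characteristic functions $f=\chi_F/\|\chi_F\|_X$, selecting $F$ so that $\|\chi_F\|_X$ or $\|\chi_F\|_Y$ is close to the relevant extremal value. This is the natural choice: the extremal fundamental functions are by definition infima/suprema over characteristic functions, so the quantities appearing in $\acInfty$ and $\acLoc$ must dominate ratios of such norms.

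For part~(i), fix $a>0$ and a small $\varepsilon>0$. By admissibility of $X$, $\fundMin_X(2a)>0$, so there exists $F\subseteq\RR$ with $\mu(F)=2a$ and $\|\chi_F\|_X\le(1+\varepsilon)\fundMin_X(2a)$; the function $f=\chi_F/\|\chi_F\|_X$ then has $\|f\|_X=1$. For \emph{any} $E$ with $\mu(E)\le a$, the set $F\setminus E$ has measure at least $a$, and monotonicity of $\fundMin_Y$ gives
\[
\|f\chi_{\RR\setminus E}\|_Y=\frac{\|\chi_{F\setminus E}\|_Y}{\|\chi_F\|_X}\ge\frac{\fundMin_Y(a)}{(1+\varepsilon)\fundMin_X(2a)}.
\]
Taking the infimum over $E$ and the supremum over $\|f\|_X\le 1$, then letting $a\to\infty$ via $X\acInfty Y$ and $\varepsilon\to 0$, yields \eqref{E:necessaryCond_fundFuncs_decay:infty}. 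The ``in particular'' assertion follows because $\fundMin_Y$ is nondecreasing and, once it takes a positive value, stays bounded below by a positive constant on the tail, so for the ratio to vanish $\fundMin_X(2a)$ must diverge.

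For part~(ii), fix a small $a>0$ and $\varepsilon\in(0,1)$. Since $\fundMax_X(a)\in(0,\infty)$ for any q-BFS, one may choose $F$ with $\mu(F)=a$ and $\|\chi_F\|_Y\ge(1-\varepsilon)\fundMax_Y(a)$ (the case $\fundMax_Y(a)=0$ being trivial). Setting $f=\chi_F/\|\chi_F\|_X$ and $E=F$ gives
\[
\|f\chi_E\|_Y=\frac{\|\chi_F\|_Y}{\|\chi_F\|_X}\ge\frac{(1-\varepsilon)\fundMax_Y(a)}{\fundMax_X(a)},
\]
so the double supremum in the definition of $\acLoc$ is at least $(1-\varepsilon)\fundMax_Y(a)/\fundMax_X(a)$; letting $a\to 0_+$ and $\varepsilon\to 0$ yields \eqref{E:necessaryCond_fundFuncs_decay:zero}. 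The ``in particular'' part is immediate: $\fundMax_X$ is nondecreasing and hence bounded on $(0,1]$ by $\fundMax_X(1)<\infty$, so $\fundMax_Y(a)\le\fundMax_X(1)\cdot\fundMax_Y(a)/\fundMax_X(a)\to 0$.

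The only delicate point lies in part~(i): the infimum over $E$ is genuine, but it is controlled by the observation that any admissible $E$ leaves $F$ at least half-uncovered, so the uniform bound $\fundMin_Y(\mu(F\setminus E))\ge\fundMin_Y(a)$ survives the infimum. No analogous difficulty occurs in (ii), where the goal is to make $\|f\chi_E\|_Y$ \emph{large} and one can simply match $E$ to the support of $f$.
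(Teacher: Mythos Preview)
Your proof is correct and follows essentially the same approach as the paper's: in both parts you and the paper test the defining suprema against normalized characteristic functions $\chi_F/\|\chi_F\|_X$ with $\mu(F)=2a$ for~(i) and $\mu(F)=a$ for~(ii), exploiting in~(i) the key observation that $\mu(F\setminus E)\ge a$ whenever $\mu(E)\le a$. The only differences are cosmetic---the paper argues by contradiction and uses a factor of~$2$ where you use $(1\pm\varepsilon)$ and argue directly.
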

\begin{proof}
We start by proving \eqref{E:necessaryCond_fundFuncs_decay:infty}. Suppose that there exist $\varepsilon_0>0$ and a sequence $\{a_k\}_{k=1}^\infty\subseteq(0, 
    \infty)$ such that $\lim_{k\to\infty}a_k=\infty$ and
    \begin{equation}\label{E:necessaryCond_fundFuncs_decay:1}
        \frac{\fundMin_Y(a_k)}{\fundMin_X(2a_k)} \geq \varepsilon_0 \quad \text{for every $k\in\N$}.
        \end{equation}
        For each $k\in\N$, choose a set $F_k\subseteq\RR$ with $\mu(F_k) = 2a_k$ such that
        \begin{equation*}
            \fundMin_X(2a_k) \leq \|\chi_{F_k}\|_X \leq 2\fundMin_X(2a_k).
        \end{equation*}
        Define
        \begin{equation*}
            f_k = \frac{\chi_{F_k}}{2\fundMin_X(2a_k)},\ k\in\N.
        \end{equation*}
				Clearly, each satisfies $\|f_k\|_X \leq 1$. For every $k\in\N$, note that $\mu(F_k \setminus E) \geq a_k$ for every $E\subseteq \RR$ with $\mu(E)\leq a_k$. Hence,
        \begin{equation*}
            \sup_{\|f\|_X\leq1} \inf_{\mu(E)\leq a_k}\|f\chi_{\RR\setminus E}\|_Y \geq \inf_{\mu(E)\leq a_k} \frac{\|\chi_{F_k\setminus E}\|_Y}{2\fundMin_X(2a_k)} \geq  \frac{\fundMin_Y(a_k)}{2\fundMin_X(2a_k)}
        \end{equation*}
        for every $k\in\N$. Combining this with \eqref{E:necessaryCond_fundFuncs_decay:1}, we conclude that $X\acInfty Y$ cannot hold.

        We now turn our attention to \eqref{E:necessaryCond_fundFuncs_decay:zero}, whose proof is similar to that of \eqref{E:necessaryCond_fundFuncs_decay:infty}. Suppose that there exist $\varepsilon_0>0$ and a sequence $\{a_k\}_{k=1}^\infty\subseteq(0, 
    \infty)$ such that $\lim_{k\to\infty}a_k=0$ and
    \begin{equation}\label{E:necessaryCond_fundFuncs_decay:2}
        \frac{\fundMax_Y(a_k)}{\fundMax_X(a_k)} \geq \varepsilon_0 \quad \text{for every $k\in\N$}.
        \end{equation}
        For each $k\in\N$, choose $E_k\subseteq\RR$ with $\mu(E_k) = a_k$ such that $\|\chi_{E_k}\|_Y \geq \fundMax_Y(a_k)/2$. Define
        \begin{equation*}
            f_k = \frac{\chi_{E_k}}{\fundMax_X(a_k)},\ k\in\N.
        \end{equation*}
        Clearly, we have $\|f_k\|_{X}\leq1$ for every $k\in\N$. Then
        \begin{equation*}
            \sup_{\|f\|_X\leq 1} \sup_{\mu(E)\leq a_k} \|f\chi_{E}\|_Y \geq \sup_{\mu(E)\leq a_k} \|f_k\chi_{E}\|_Y \geq \frac{\|\chi_{E_k}\|_Y}{\fundMax_X(a_k)} \geq \frac{\fundMax_Y(a_k)}{2\fundMax_X(a_k)}
        \end{equation*}
        for every $k\in\N$. Combining this with \eqref{E:necessaryCond_fundFuncs_decay:2}, we see that $X\acLoc Y$ does not hold.
\end{proof}

\begin{remark}\label{rem:necessaryCond_fundFuncs_decay_ri}
When $X$ is rearrangement invariant, we have $\fundMin_X=\fundMax_X =\fund_X$ and $\fund_X(2a)\leq 2C_X\fund_X(a)$ for every $a\in(0, \infty)$, since any set of measure $2a$ can be written as a disjoint union of two sets of measure $a$. Hence, in the rearrangement-invariant case, $\fundMin_X(2a)$ in 
\eqref{E:necessaryCond_fundFuncs_decay:infty} can be replaced by $\fund_X(a)$. 

Furthermore, it can be readily verified that $X$ need not be admissible for \eqref{E:necessaryCond_fundFuncs_decay:zero} to hold. In particular, \eqref{E:necessaryCond_fundFuncs_decay:zero} implies that there is no q-BFS $X$ such that $X \acLoc X$. 

Finally, when $X$ is admissible, a slight modification of the argument in the proof of 
\eqref{E:necessaryCond_fundFuncs_decay:zero} shows that the same conclusion holds with both maximal fundamental functions replaced by the respective minimal fundamental functions.
\end{remark}

We now characterize both relations $\acLoc$ and $\acInfty$ in terms of convergence of functions. Not only are these characterizations useful from applications' point of view, but they also reveal the essence of these relations. We start with $\acInfty$.
\begin{theorem}\label{thm:characterization_infty_convergence}
    Let $X$ and $Y$ be q-BFSs, with $X$ being admissible. The following two statements are equivalent.
    \begin{enumerate}[(i)]
        \item $X\acInfty Y$.
        \item We have $\lim_{t\to\infty}\fundMin_X(t) = \infty$ and for every bounded sequence $\{f_n\}_{n = 1}^\infty\subseteq X$ in $X$ such that $\lim_{n\to\infty}\|f_n\|_{L^\infty\RM} = 0$, we have $\lim_{n\to\infty}\|f_n\|_Y = 0$.
    \end{enumerate}
\end{theorem}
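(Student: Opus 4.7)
My plan is to prove the two implications separately, using the definition of $\acInfty$ directly in one direction and a level-set truncation in the other. For (i)$\Rightarrow$(ii), the assertion $\lim_{t\to\infty}\fundMin_X(t)=\infty$ is the ``in particular'' part of Proposition~\ref{prop:necessaryCond_fundFuncs_decay}(i), so it comes for free. Given a bounded sequence $\{f_n\}\subseteq X$ with $\|f_n\|_{L^\infty\RM}\to 0$, after rescaling I may assume $\|f_n\|_X\leq 1$. Fixing $\varepsilon>0$ and using $X\acInfty Y$, I choose $a>0$ so large that for every $n$ there is $E_n\subseteq\RR$ with $\mu(E_n)\leq a$ and $\|f_n\chi_{\RR\setminus E_n}\|_Y<\varepsilon$. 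The quasi-triangle inequality combined with the pointwise estimate $|f_n\chi_{E_n}|\leq\|f_n\|_{L^\infty\RM}\chi_{E_n}$ then gives
\begin{equation*}
    \|f_n\|_Y \;\leq\; C_Y\bigl(\|f_n\|_{L^\infty\RM}\fundMax_Y(a)+\varepsilon\bigr).
\end{equation*}
Letting first $n\to\infty$ (so the first term vanishes) and then $\varepsilon\to 0_+$ yields $\|f_n\|_Y\to 0$. The only subtle point is the finiteness of $\fundMax_Y(a)$, which holds in every q-BFS.

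For (ii)$\Rightarrow$(i), I would argue by contraposition. Since the quantity $\sup_{\|f\|_X\leq 1}\inf_{\mu(E)\leq a}\|f\chi_{\RR\setminus E}\|_Y$ is nonincreasing in $a$, its failure to tend to $0$ produces $\varepsilon_0>0$ and, for each $n\in\N$, a function $f_n$ with $\|f_n\|_X\leq 1$ and
\begin{equation*}
    \inf_{\mu(E)\leq n}\|f_n\chi_{\RR\setminus E}\|_Y \;\geq\; \varepsilon_0.
\end{equation*}
The goal is to modify $\{f_n\}$ into a sequence that still satisfies this lower bound but whose $L^\infty$-norms vanish, thereby contradicting~(ii). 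To this end, I would set $E_n=\{|f_n|>\lambda_n\}$ and $g_n=f_n\chi_{\RR\setminus E_n}$, so automatically $\|g_n\|_X\leq 1$ and $\|g_n\|_{L^\infty\RM}\leq\lambda_n$.

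The crux is the choice of $\lambda_n$. From $\lambda_n\chi_{E_n}\leq|f_n|$ and property~(P2) one obtains the chain
\begin{equation*}
    \lambda_n\fundMin_X(\mu(E_n)) \;\leq\; \lambda_n\|\chi_{E_n}\|_X \;\leq\; \|f_n\|_X \;\leq\; 1,
\end{equation*}
so by the monotonicity of $\fundMin_X$ it suffices to arrange $\lambda_n^{-1}<\fundMin_X(n)$ in order to force $\mu(E_n)\leq n$. Admissibility of $X$ together with the hypothesis $\fundMin_X(n)\to\infty$ permits the choice $\lambda_n=\fundMin_X(n)^{-1/2}$ (for $n$ so large that $\fundMin_X(n)>1$), giving simultaneously $\lambda_n\to 0$ and $\lambda_n\fundMin_X(n)\to\infty$. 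Then $\mu(E_n)\leq n$ for large $n$, whence $\|g_n\|_Y=\|f_n\chi_{\RR\setminus E_n}\|_Y\geq\varepsilon_0$, contradicting~(ii). The interplay of the two scales $\lambda_n$ and $n$ through $\fundMin_X$ is the only delicate point; the rest is bookkeeping.
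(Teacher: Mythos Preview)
Your proposal is correct and follows essentially the same approach as the paper's proof: the direction (i)$\Rightarrow$(ii) is identical, and for (ii)$\Rightarrow$(i) both arguments proceed by contraposition via a level-set truncation $g_n=f_n\chi_{\{|f_n|\leq\lambda_n\}}$, the only cosmetic difference being that the paper passes to a subsequence with $\fundMin_X(a_{n+1})>\fundMin_X(a_n)$ and takes $\lambda_n=1/\fundMin_X(a_{n-1})$, whereas you avoid the subsequence by choosing $\lambda_n=\fundMin_X(n)^{-1/2}$.
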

\begin{proof}
    Assume first that $X\acInfty Y$. By Proposition~\ref{prop:necessaryCond_fundFuncs_decay}, we have $\lim_{t\to\infty}\fundMin_X(t) = \infty$. Let $\{f_n\}_{n = 1}^\infty\subseteq X$ be a bounded sequence in $X$ such that $\lim_{n\to\infty}\|f_n\|_{L^\infty\RM} = 0$. We need to show that $\lim_{n\to\infty}\|f_n\|_Y = 0$. Without loss of generality, we may assume that $\|f_n\|_X\leq1$ for every $n\in\N$. Let $\varepsilon > 0$. Since $X\acInfty Y$ and $\|f_n\|_X\leq1$ for each $n\in\N$, there exists $a_0>0$ and sets $E_n\subseteq\RR$ such that $\mu(E_n)\leq a_0$ and
    \begin{equation*}
        \|f_n\chi_{\RR\setminus E_n}\|_Y \leq \varepsilon \quad \text{for every $n\in\N$}.
    \end{equation*}
    Hence,
    \begin{equation*}
        \|f_n\|_Y\leq C_Y\big( \|f_n\chi_{E_n}\|_Y + \|f_n\chi_{\RR\setminus E_n}\|_Y \big) \leq C_Y\|f_n\|_{L^\infty\RM}\fundMax_Y(a_0) + C_Y\varepsilon
    \end{equation*}
    for every $n\in\N$. Since $\lim_{n\to\infty}\|f_n\|_{L^\infty\RM} = 0$, it follows that $\lim_{n\to\infty}\|f_n\|_Y = 0$.

    Assume now that (ii) holds. We will show that $X\acInfty Y$ by contradiction. Suppose that $X\acInfty Y$ does not hold. Then there exist $\varepsilon_0 > 0$, a sequence $\{f_n\}_{n=1}^\infty\subseteq\MRM$ with $\|f_n\|_X\leq1$ for all $n\in\N$, and an increasing sequence $\{a_n\}_{n=1}^\infty\subseteq(0,\infty)$ with $\lim_{n\to\infty} a_n = \infty$ such that
    \begin{equation}\label{E:characterization_infty_convergence:1}
        \inf_{\mu(E)\leq a_n} \|f_n\chi_{\RR\setminus E}\|_Y \geq \varepsilon_0 \quad \text{for every $n\in\N$}.
    \end{equation}
    Passing to a subsequence if necessary, we may additionally assume that
    \begin{equation}\label{E:characterization_infty_convergence:2}
        \fundMin_X(a_{n+1}) > \fundMin_X(a_n) \quad \text{for every $n\in\N$},
    \end{equation}
    using the fact that $\lim_{t\to\infty}\fundMin_X(t) = \infty$. For each $n\in\N$, $n\geq2$, define
    \begin{equation*}
        E_n = \Big\{ x\in\RR: |f_n(x)| > \frac1{\fundMin_X(a_{n-1})} \Big\}.
    \end{equation*}
    We claim that
    \begin{equation}\label{E:characterization_infty_convergence:3}
        \mu(E_n)\leq a_n \quad \text{for every $n\in\N$, $n\geq2$}.
    \end{equation}
    Indeed, if $\mu(E_n) > a_n$ for some $n\in\N$, $n\geq2$, then
    \begin{equation*}
        1\geq \|f_n\|_X \geq \|f_n\chi_{E_n}\|_X \geq \frac{\|\chi_{E_n}\|_X}{\fundMin_X(a_{n-1})} \geq \frac{\fundMin_X(a_n)}{\fundMin_X(a_{n-1})} > 1,
    \end{equation*}
    which would be a contradiction; the last inequality follows from \eqref{E:characterization_infty_convergence:2}. Define
    \begin{equation*}
        g_n = f_n\chi_{\RR\setminus E_n},\ n\in\N,\ n\geq2.
    \end{equation*}
    We have
    \begin{equation*}
        \|g_n\|_X \leq \|f_n\|_X\leq1 \quad \text{for every $n\in\N$, $n\geq2$},
    \end{equation*}
    and
    \begin{equation*}
       \lim_{n\to\infty} \|g_n\|_{L^\infty\RM} \leq \lim_{n\to\infty} \frac1{\fundMin_X(a_{n-1})} = 0.
    \end{equation*}
    Consequently, the validity of (ii) implies that
    \begin{equation}\label{E:characterization_infty_convergence:4}
        \lim_{n\to\infty} \|g_n\|_Y = 0.
    \end{equation}
    However, by \eqref{E:characterization_infty_convergence:3} and \eqref{E:characterization_infty_convergence:1}, we also have
    \begin{equation}\label{E:characterization_infty_convergence:5}
        \|g_n\|_Y = \|f_n\chi_{\RR\setminus E_n}\|_Y \geq \inf_{\mu(E)\leq a_n} \|f_n\chi_{\RR\setminus E}\|_Y \geq \varepsilon_0
    \end{equation}
    for all $n\in\N$, $n\geq2$. Hence, in view of \eqref{E:characterization_infty_convergence:4} and \eqref{E:characterization_infty_convergence:5}, we have reached a contradiction.
\end{proof}

\begin{remark}
    Note that the assumption $\lim_{t\to\infty}\fundMin_X(t) = \infty$ in Theorem~\ref{thm:characterization_infty_convergence}(ii) is essential and cannot be omitted. For example, consider $X=Y=L^\infty$. Then every (bounded) sequence $\{f_n\}_{n=1}^\infty\subseteq X$ such that $\lim_{n\to\infty} \|f_n\|_{L^\infty\RM} = 0$ clearly satisfies $\lim_{n\to\infty} \|f_n\|_Y = 0$, but $X\acInfty Y$ does not hold (see Proposition~\ref{prop:necessaryCond_fundFuncs_decay}(i) or Proposition~\ref{prop:X_acInfty_X_never}). This is in contrast to the characterization of $\acLoc$ by means of convergence of functions (see~Theorem~\ref{thm:characterization_loc_ac_convergence} below), or of $\ac$ for that matter (see~\cite[Theorem~3.1]{S:12}), which does not entail any such condition.
\end{remark}

\begin{proposition}\label{prop:X_acInfty_X_never}
There is no admissible q-BFS $X$ such that $X \acInfty X$.
\end{proposition}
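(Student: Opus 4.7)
The plan is to derive a contradiction by combining Theorem~\ref{thm:characterization_infty_convergence} with a concrete construction. Suppose for contradiction that $X$ is an admissible q-BFS satisfying $X\acInfty X$. Applying the implication (i)~$\Rightarrow$~(ii) in Theorem~\ref{thm:characterization_infty_convergence} with $Y=X$ yields two facts: first, $\lim_{t\to\infty}\fundMin_X(t)=\infty$; second, every bounded sequence $\{f_n\}_{n=1}^\infty\subseteq X$ with $\|f_n\|_{L^\infty\RM}\to 0$ must satisfy $\|f_n\|_X\to 0$.

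Using the first fact, I would pick a sequence $\{t_n\}_{n=1}^\infty\subseteq(0,\infty)$ with $\fundMin_X(t_n)\geq n$ and then, using $\sigma$-finiteness and nonatomicity of $\RM$ together with $\mu(\RR)=\infty$, select sets $F_n\subseteq \RR$ with $\mu(F_n)=t_n$. Admissibility of $X$ guarantees $\|\chi_{F_n}\|_X\geq \fundMin_X(t_n)\geq n>0$, while property~(P4) ensures $\|\chi_{F_n}\|_X<\infty$. Hence the normalized functions
\begin{equation*}
    f_n = \frac{\chi_{F_n}}{\|\chi_{F_n}\|_X},\ n\in\N,
\end{equation*}
are well defined, satisfy $\|f_n\|_X=1$ for every $n$, and $\|f_n\|_{L^\infty\RM} = 1/\|\chi_{F_n}\|_X \leq 1/n \to 0$.

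Thus $\{f_n\}_{n=1}^\infty$ is a bounded sequence in $X$ with $\|f_n\|_{L^\infty\RM}\to 0$, yet $\|f_n\|_X=1$ for all $n$, contradicting the second consequence of Theorem~\ref{thm:characterization_infty_convergence}. There is no genuine obstacle in this strategy: the substance of the argument is already contained in the characterization theorem, and the construction is essentially forced once admissibility is used to legitimize the normalization. A purely fundamental-function argument via Proposition~\ref{prop:necessaryCond_fundFuncs_decay}(i) would only give $\lim_{a\to\infty}\fundMin_X(a)/\fundMin_X(2a)=0$, which is not self-contradictory, so routing through Theorem~\ref{thm:characterization_infty_convergence} seems unavoidable.
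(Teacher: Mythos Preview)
Your proof is correct and follows essentially the same approach as the paper: both use Theorem~\ref{thm:characterization_infty_convergence} to reduce to constructing a bounded sequence of normalized characteristic functions of sets with measure tending to infinity. Your normalization by $\|\chi_{F_n}\|_X$ is in fact slightly cleaner than the paper's, which instead chooses sets with $\|\chi_{E_n}\|_X\leq 2\fundMin_X(\mu(E_n))$ and normalizes by $2\fundMin_X(\mu(E_n))$, obtaining $\|f_n\|_X\geq 1/2$ rather than $\|f_n\|_X=1$.
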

\begin{proof}
Suppose that $X \acInfty X$. By Proposition~\ref{prop:necessaryCond_fundFuncs_decay}(i), we have $\lim_{t\to\infty}\fundMin_{X}(t) = \infty$. It follows that there exists a sequence of sets $\{E_n\}_{n = 1}^\infty\subseteq\RR$ such that
\begin{equation*}
\lim_{n\to\infty} \mu(E_n) = \infty
\end{equation*}
and
\begin{equation*}
\fundMin_X(\mu(E_n)) \leq \|\chi_{E_n}\|_X \leq 2\fundMin_X(\mu(E_n)) \quad \text{for every $n\in\N$}.
\end{equation*}
Now, consider the sequence of functions $\{f_n\}_{n = 1}^\infty$ defined by
\begin{equation*}
f_n = \frac{\chi_{E_n}}{2\fundMin_X(\mu(E_n))},\ n\in\N.
\end{equation*}
Then
\begin{equation*}
\sup_{n\in\N} \|f_n\|_X\leq1 \qquad \text{and} \qquad \lim_{n\to\infty} \|f_n\|_{L^\infty\RM} = 0,
\end{equation*}
while
\begin{equation*}
\inf_{n\in\N}\|f_n\|_X \geq \frac1{2}.
\end{equation*}
This contradicts Theorem~\ref{thm:characterization_infty_convergence}, which completes the proof.
\end{proof}

We now prove a characterization of $\acLoc$ by means of convergence of function. Its comparison with the respective characterization of the relation $\ac$ proved in \cite[Theorem~3.1]{S:12} reveals the key difference between them, which is the root of why it never holds $X\ac Y$ when $\mu(\RR)=\infty$, whereas the relation $\acLoc$ is still useful.
\begin{theorem}\label{thm:characterization_loc_ac_convergence}
    Let $X$ and $Y$ be q-BFSs. The following two statements are equivalent.
    \begin{enumerate}[(i)]
        \item We have $X\acLoc Y$.
        \item For every bounded sequence $\{f_n\}_{n = 1}^\infty\subseteq X$ in $X$ such that $\lim_{n\to\infty}\mu(\spt f_n) = 0$, we have $\lim_{n\to\infty}\|f_n\|_Y = 0$.
    \end{enumerate}
\end{theorem}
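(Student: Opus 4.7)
The proof is naturally a direct verification of each implication, and in fact both directions are fairly routine once one recognizes that the right way to test (ii) is to consider functions of the form $f\chi_E$ with $\mu(E)$ small.

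For (i) $\Rightarrow$ (ii), the plan is as follows. Assume $X\acLoc Y$ and let $\{f_n\}_{n=1}^\infty\subseteq X$ be bounded in $X$ with $\lim_{n\to\infty}\mu(\spt f_n)=0$. Without loss of generality, $\|f_n\|_X\leq 1$ for every $n$. Fix $\varepsilon>0$ and pick $a_0>0$ provided by the definition of $\acLoc$, so that
\begin{equation*}
\sup_{\|f\|_X\leq 1}\ \sup_{\mu(E)\leq a_0}\|f\chi_E\|_Y < \varepsilon.
\end{equation*}
Since $\mu(\spt f_n)\to 0$, for all sufficiently large $n$ we have $\mu(\spt f_n)\leq a_0$, and then $\|f_n\|_Y = \|f_n\chi_{\spt f_n}\|_Y<\varepsilon$.

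For the converse (ii) $\Rightarrow$ (i), I would argue by contrapositive. If $X\acLoc Y$ fails, then there exist $\varepsilon_0>0$, a sequence $a_n\to 0_+$, functions $f_n$ with $\|f_n\|_X\leq 1$, and sets $E_n\subseteq\RR$ with $\mu(E_n)\leq a_n$ such that $\|f_n\chi_{E_n}\|_Y\geq\varepsilon_0$. Set $g_n=f_n\chi_{E_n}$; then $\|g_n\|_X\leq\|f_n\|_X\leq 1$ by property~(P2), while $\mu(\spt g_n)\leq\mu(E_n)\leq a_n\to 0$, yet $\|g_n\|_Y\geq\varepsilon_0$. This contradicts (ii).

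There is no serious obstacle here. The proof requires no appeal to extremal fundamental functions and no diagonal/subsequence argument, in sharp contrast to the characterization of $\acInfty$ in Theorem~\ref{thm:characterization_infty_convergence}, where one must both rule out the degenerate case $\fundMin_X$ bounded and extract a bad subsequence by truncating at carefully chosen heights. The asymmetry comes from the fact that for $\acLoc$ a witness of failure is already concentrated on a set of small measure, so passing to $g_n=f_n\chi_{E_n}$ gives a sequence of functions whose supports shrink\textemdash no further refinement is needed. This also explains, from a technical standpoint, the comment after the statement: the characterization of $\ac$ uses pointwise a.e.\ convergence because there the "bad" sets need not have small measure, whereas for $\acLoc$ the smallness of the sets $E_n$ is built into the definition and translates verbatim into smallness of supports.
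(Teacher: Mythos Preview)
Your proof is correct, and the direction (ii) $\Rightarrow$ (i) matches the paper's argument essentially verbatim. For (i) $\Rightarrow$ (ii), however, your approach is genuinely simpler than the paper's. The paper constructs an auxiliary function $g>0$ $\mu$-a.e.\ with $\|g\|_Y\leq 1$, sets $E_n=\{|f_n|>\varepsilon g\}$, and splits $\|f_n\|_Y$ via the quasi-triangle inequality into a piece on $E_n$ (controlled by $\acLoc$ since $\mu(E_n)\leq\mu(\spt f_n)\to 0$) and a piece on $\RR\setminus E_n$ (bounded by $\varepsilon\|g\|_Y$). This mirrors the structure of the proof of the analogous characterization of $\ac$ in \cite[Theorem~3.1]{S:12}, where such a splitting is genuinely needed because the hypothesis is pointwise a.e.\ convergence rather than shrinking supports. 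You correctly observe that for $\acLoc$ this machinery is unnecessary: since $\spt f_n$ itself has small measure, one may take $E=\spt f_n$ directly in the definition and conclude $\|f_n\|_Y=\|f_n\chi_{\spt f_n}\|_Y<\varepsilon$ in one line. Your argument is more elementary and highlights precisely the structural difference between $\acLoc$ and $\ac$ that the paper comments on after the theorem.
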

\begin{proof}
    First, assume that $X\acLoc Y$. Let $\{R_n\}_{n = 1}^\infty\subseteq\RR$ be such that $\bigcup_{n=1}^\infty R_n = \RR$ and $0<\mu(R_n)<\infty$ for every $n\in\N$. Set
    \begin{equation*}
       g = \sum_{n=1}^\infty \frac{\chi_{R_n}}{2^nC_Y^n\fundMax_Y(R_n)}.
    \end{equation*}
    It is easy to see that $g>0$ $\mu$-a.e.~on $\RR$ and
    \begin{equation}\label{E:characterization_loc_ac_convergence:1}
        \|g\|_Y\leq1.
    \end{equation}
    Now let $\{f_n\}_{n = 1}^\infty\subseteq X$ be a bounded sequence in $X$ such that $\lim_{n\to\infty}\mu(\spt f_n) = 0$. Without loss of generality, we may assume that $\|f_n\|_X\leq1$ for every $n\in\N$. Fix $\varepsilon>0$ and set
    \begin{equation*}
        E_n = \{x\in\RR: |f_n(x)|> \varepsilon g(x)\},\ n\in\N.
    \end{equation*}
    Then
    \begin{equation}\label{E:characterization_loc_ac_convergence:2}
        \mu(E_n)\leq \mu(\spt f_n) \to 0 \quad \text{as $n\to\infty$}.
    \end{equation}
    Using \eqref{E:characterization_loc_ac_convergence:1}, we obtain
    \begin{equation*}
        \|f_n\|_{Y} \leq C_Y\big( \|f_n\chi_{E_n}\|_Y + \|f_n\chi_{\RR\setminus E_n}\|_Y \big) \leq C_Y\sup_{\|f\|_X\leq1}\sup_{\mu(E)\leq \mu(E_n)} \|f\chi_E\|_Y + C_Y\varepsilon
    \end{equation*}
    for every $n\in\N$. Since the double supremum on the right-hand side goes to $0$ as $n\to\infty$ by $X\acLoc Y$ and \eqref{E:characterization_loc_ac_convergence:2}, we conclude that
    \begin{equation*}
        \lim_{n\to\infty}\|f_n\|_Y = 0.
    \end{equation*}

    Second, assume that (ii) holds and suppose that $X\acLoc Y$ does not. Then there exist $\varepsilon_0>0$ and sequences $\{f_n\}_{n=1}^\infty\subseteq\MRM$ with $\|f_n\|_X\leq1$ and $\{E_n\}_{n = 1}^\infty\subseteq\RM$ such that
    \begin{equation}\label{E:characterization_loc_ac_convergence:3}
        \|f_n\chi_{E_n}\|_Y \geq \varepsilon_0 \quad \text{for every $n\in\N$}
    \end{equation}
    and
    \begin{equation*}
        \lim_{n\to\infty} \mu(E_n) = 0.
    \end{equation*}
    Set
    \begin{equation*}
        g_n = f_n\chi_{E_n},\ n\in\N.
    \end{equation*}
   Then $\|g_n\|_X \leq1$ for all $n\in\N$ and $\lim_{n\to\infty}\mu(\spt g_n) = 0$. Hence, by (ii),
    \begin{equation*}
        \lim_{n\to\infty}\|f_n\chi_{E_n}\|_Y = \lim_{n\to\infty}\|g_n\|_Y = 0.
    \end{equation*}
    This, however, contradicts \eqref{E:characterization_loc_ac_convergence:3}, completing the proof.
\end{proof}

We now provide a general compactness principle, which shows how the relations $\acLoc$ and $\acInfty$ can be used together for obtaining compactness (see~Theorem~\ref{thm:application_Lions_compactness_inhom_Sob} and Remark~\ref{rem:application_Lions_compactness_inhom_Sob_general_ri} for its application to inhomogeneous Sobolev spaces on $\rn$).
\begin{theorem}\label{thm:Lions_compactness_lemma}
Let $X,Y$ and $Z$ be q-BFSs, with $X$ being admissible. Assume that $X\acInfty Z$ and $Y\acLoc Z$. Let $\{u_n\}_{n=1}^\infty\subseteq\MRM$ be a sequence of functions that is bounded in both $X$ and $Y$, and satisfies
\begin{equation}\label{E:Lions_compactness_lemma:convergence_in_measure}
    \lim_{n\to\infty} \mu(\{x\in\RR: |u_n(x)| > \varepsilon\}) = 0 \quad \text{for every $\varepsilon > 0$}.
\end{equation}
Then
\begin{equation}\label{E:Lions_compactness_lemma:convergence_in_norm}
    \lim_{n\to\infty}\|u_n\|_Z = 0.
\end{equation}
\end{theorem}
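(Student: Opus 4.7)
The strategy is to split $u_n$ at a truncation level $\varepsilon_n\searrow 0$ chosen compatibly with the convergence in measure hypothesis~\eqref{E:Lions_compactness_lemma:convergence_in_measure}, so that the part lying above the level has shrinking support and can be handled via $Y\acLoc Z$, while the part below it has $L^\infty$-norm tending to $0$ and can be handled via $X\acInfty Z$. In this way, the two new relations cover exactly the two distinct mechanisms by which mass of $u_n$ could fail to vanish in $Z$: high peaks on very small sets, and small values possibly spread out toward infinity.

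First, using~\eqref{E:Lions_compactness_lemma:convergence_in_measure}, I would extract a sequence $\varepsilon_n\searrow 0$ satisfying $\lim_{n\to\infty}\mu(\{x\in\RR\colon |u_n(x)|>\varepsilon_n\})=0$ by a standard diagonal argument: for each $k\in\N$ pick $N_k$ so that $\mu(\{x\in\RR\colon |u_n(x)|>1/k\})<1/k$ for every $n\geq N_k$, and set $\varepsilon_n=1/k$ whenever $N_k\leq n<N_{k+1}$. With this choice, decompose
$$u_n = v_n + w_n,\qquad v_n = u_n\chi_{\{|u_n|>\varepsilon_n\}},\qquad w_n = u_n\chi_{\{|u_n|\leq\varepsilon_n\}}.$$

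Since $|v_n|\leq |u_n|$, the sequence $\{v_n\}_{n=1}^\infty$ is bounded in $Y$, and by construction $\mu(\spt v_n)\leq\mu(\{x\in\RR\colon |u_n(x)|>\varepsilon_n\})\to 0$. Hence Theorem~\ref{thm:characterization_loc_ac_convergence}, applied with the relation $Y\acLoc Z$, gives $\|v_n\|_Z\to 0$. On the other hand, $\{w_n\}_{n=1}^\infty$ is bounded in $X$ and $\|w_n\|_{L^\infty\RM}\leq\varepsilon_n\to 0$; since $X$ is admissible and $X\acInfty Z$, the implication (i)$\Rightarrow$(ii) of Theorem~\ref{thm:characterization_infty_convergence} yields $\|w_n\|_Z\to 0$. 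The conclusion~\eqref{E:Lions_compactness_lemma:convergence_in_norm} then follows from the quasi-triangle inequality $\|u_n\|_Z\leq C_Z(\|v_n\|_Z+\|w_n\|_Z)$.

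The only mildly technical step is the diagonal choice of $\varepsilon_n$, which converts the pointwise-in-$\varepsilon$ statement \eqref{E:Lions_compactness_lemma:convergence_in_measure} into a single vanishing truncation level suitable for both halves of the decomposition; once this is in place, the rest is a direct appeal to the two characterization theorems proved earlier in the section. I do not expect any other real obstacle, and in particular no subsequence argument is needed, since the diagonal construction already provides the required $\varepsilon_n$ for the original sequence.
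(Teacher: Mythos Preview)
Your argument is correct and in fact cleaner than the paper's. The paper does not invoke Theorems~\ref{thm:characterization_infty_convergence} and~\ref{thm:characterization_loc_ac_convergence}; instead it works directly from the definitions of $\acInfty$ and $\acLoc$. Given $\varepsilon>0$, it first uses $X\acInfty Z$ to find sets $E_n$ of uniformly bounded measure with $\|u_n\chi_{\RR\setminus E_n}\|_Z\leq\varepsilon$, then enlarges $\RR\setminus E_n$ by a fixed low-level truncation, then uses $Y\acLoc Z$ and the admissibility of $X$ (via the generalized inverse of $\fundMin_X$) to peel off a high-level part $G_n$ of small measure, and finally handles the remaining ``middle'' piece $\{ \kappa<|u_n|\leq\lambda\}$ using the convergence-in-measure hypothesis together with $\lim_{a\to0_+}\fundMax_Z(a)=0$. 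This yields a three-piece decomposition and an explicit $\varepsilon$--chase.

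By contrast, you use a single diagonal truncation level $\varepsilon_n\searrow0$ and a two-piece split $u_n=v_n+w_n$, feeding the two halves directly into the characterization theorems. This is shorter and makes the conceptual point more transparent: the two relations handle, respectively, tall spikes on vanishing supports and uniformly small tails. The paper's proof, on the other hand, is more self-contained in the sense that it does not presuppose Theorems~\ref{thm:characterization_infty_convergence} and~\ref{thm:characterization_loc_ac_convergence}, and it shows explicitly how the admissibility of $X$ and the behaviour of $\fundMax_Z$ near $0$ enter. Since those theorems are already in hand at this point of the paper, your route is a legitimate simplification. One tiny cosmetic point: in your diagonal construction you should also define $\varepsilon_n$ for $n<N_1$ (any fixed positive value will do), but this does not affect the limit.
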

\begin{proof}
    Clearly, we may assume without loss of generality that $\max\{\|u_n\|_X, \|u_n\|_Y\}\leq 1$ for all $n\in\N$. Let $\varepsilon>0$. Since $X\acInfty Z$, there exists $a>0$ such that for each $n\in\N$ we can find a set $E_n\subseteq\RR$ with $\mu(E_n)\leq a$ satisfying
    \begin{equation}\label{E:Lions_compactness_lemma:1}
        \|u_n\chi_{\RR\setminus E_n}\|_Z \leq \varepsilon \quad \text{for every $n\in\N$}.
    \end{equation}
    Next, choose $\kappa>0$ small enough so that
    \begin{equation}\label{E:Lions_compactness_lemma:2}
        \kappa \fundMax_Z(a) \leq \varepsilon,
    \end{equation}
    and define
    \begin{equation*}
        F_n = \{x\in E_n: |u_n(x)| > \kappa\} \quad \text{for every $n\in\N$}.
    \end{equation*}
    Using \eqref{E:Lions_compactness_lemma:1} and \eqref{E:Lions_compactness_lemma:2}, we obtain
    \begin{align}
        \|u_n\chi_{\RR\setminus F_n}\|_Z &\leq C_Z\big(\|u_n\chi_{\RR\setminus E_n}\|_Z + \|u_n\chi_{E_n\setminus F_n}\|_Z \big) \nonumber\\
        &\leq C_Z\big( \varepsilon + \kappa \fundMax_Z(\mu(E_n)) \big)\leq 2C_Z\varepsilon\label{E:Lions_compactness_lemma:3}
    \end{align}
    for every $n\in\N$. Since $Y\acLoc Z$, there exists $\delta>0$ such that
    \begin{equation}\label{E:Lions_compactness_lemma:4}
        \sup_{\mu(F)\leq \delta}\|u_n\chi_F\|_Z \leq \varepsilon \quad \text{for every $n\in\N$}.
    \end{equation}
    Now, for any $\lambda>0$, note that
    \begin{equation*}
        \fundMin_X(\mu(\{x\in\RR: |u_n(x)| > \lambda\})) \leq \frac1{\lambda}\|u_n\|_X \leq \frac1{\lambda} \quad \text{for every $n\in\N$}.
    \end{equation*}
    It follows that
    \begin{equation}\label{E:Lions_compactness_lemma:5}
        \mu(\{x\in\RR: |u_n(x)| > \lambda\}) \leq F\Big( \frac1{\lambda} \Big) \quad \text{for every $n\in\N$},
    \end{equation}
    where $F\colon [0, \infty) \to [0, \infty]$ is the right-continuous generalized inverse function to the nondecreasing function $\fundMin_X$, that is,
    \begin{equation}\label{E:Lions_compactness_lemma:10}
        F(t) = \sup\{s\geq0: \fundMin_X(s) \leq t\},\ t\in[0,\infty).
    \end{equation}
    Observe that $t\leq F(\fundMin_X(t))$ for every $t\in[0, \infty)$. Furthermore, since $X$ is admissible, we have $0 < \fundMin_X(t) < \infty$ for every $t\in(0, \infty)$, and therefore
    \begin{equation}\label{E:Lions_compactness_lemma:6}
        \lim_{t\to0_+} F(t) = 0.
    \end{equation}
    Hence, in view of \eqref{E:Lions_compactness_lemma:5} and \eqref{E:Lions_compactness_lemma:6}, there exists $\lambda>0$ sufficiently large so that
    \begin{equation}\label{E:Lions_compactness_lemma:7}
        \mu(G_n) \leq \delta \quad \text{for every $n\in\N$},
    \end{equation}
    where
    \begin{equation*}
        G_n = \{x\in F_n: |u_n(x)| > \lambda\},\ n\in\N.
    \end{equation*}
    Combining \eqref{E:Lions_compactness_lemma:3}, \eqref{E:Lions_compactness_lemma:4}, and \eqref{E:Lions_compactness_lemma:7}, we obtain
    \begin{align}
        \|u_n\|_Z &\leq C_Z^2\Big( \|u_n\chi_{\RR\setminus F_n}\|_Z + \|u_n\chi_{F_n\setminus G_n}\|_Z + \|u_n\chi_{G_n}\|_Z \Big) \nonumber\\
        &\leq C_Z^2\Big( 2C_Z\varepsilon + \varepsilon + \|u_n\chi_{F_n\setminus G_n}\|_Z \Big) \label{E:Lions_compactness_lemma:8}
    \end{align}
    for every $n\in\N$.

    Finally, since
    \begin{equation*}
        F_n\setminus G_n \subseteq \{x\in\RR: \kappa < |u_n(x)| \leq \lambda\} \quad \text{for every $n\in\N$},
    \end{equation*}
    we have
    \begin{equation}\label{E:Lions_compactness_lemma:9}
        \|u_n\chi_{F_n\setminus G_n}\|_Z \leq \lambda \fundMax_Z(\mu(F_n\setminus G_n)) \quad \text{for every $n\in\N$}.
    \end{equation}
    By Proposition~\ref{prop:necessaryCond_fundFuncs_decay}(ii) and Remark~\ref{rem:necessaryCond_fundFuncs_decay_ri}, we know that the assumption $Y\acLoc Z$ implies $\lim_{a\to0_+} \fundMax_Z(a) = 0$. Since both $\lambda>0$ and $\kappa>0$ are fixed, it follows from \eqref{E:Lions_compactness_lemma:9} and \eqref{E:Lions_compactness_lemma:convergence_in_measure} that
    \begin{equation*}
        \lim_{n\to\infty} \|u_n\chi_{F_n\setminus G_n}\|_Z = 0.
    \end{equation*}
    At last, combining this with \eqref{E:Lions_compactness_lemma:8}, we obtain \eqref{E:Lions_compactness_lemma:convergence_in_norm}.
\end{proof}

When both $X$ and $Y$ are rearrangement invariant, the relations $\acLoc$ and $\acInfty$ have straightforward characterizations by means of nonincreasing functions and representation spaces on the interval $(0, \infty)$.
\begin{theorem}\label{thm:characterization_for_ri}
    Let $X$ and $Y$ be r.i.~q-BFSs.
    \begin{enumerate}[(i)]
    \item $X\acInfty Y$ holds if and only if
\begin{equation}\label{E:characterization_for_ri:infty}
    \lim_{a\to \infty} \sup_{\|f\|_{\widebar{X}}\leq1} \|f^*\chi_{(a, \infty)}\|_{\widebar{Y}} = 0.
\end{equation}
\item $X\acLoc Y$ holds if and only if
\begin{equation}\label{E:characterization_for_ri:zero}
    \lim_{a\to 0_+} \sup_{\|f\|_{\widebar{X}}\leq1} \|f^*\chi_{(0, a)}\|_{\widebar{Y}} = 0.
\end{equation}
    \end{enumerate}
\end{theorem}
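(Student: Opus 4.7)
The plan is to convert both $\acInfty$ and $\acLoc$ into quantities on the representation interval by establishing the following two identities, valid for every $f\in\MRM$ and every $a\in(0,\infty)$:
\begin{equation*}
    \inf_{\mu(E)\leq a}\|f\chi_{\RR\setminus E}\|_Y=\|f^*\chi_{(a,\infty)}\|_{\widebar{Y}}\quad\text{and}\quad\sup_{\mu(E)\leq a}\|f\chi_{E}\|_Y=\|f^*\chi_{(0,a)}\|_{\widebar{Y}}.
\end{equation*}
Once these are in hand, taking the supremum over $\|f\|_X\leq 1$ and using $\|f\|_X=\|f^*\|_{\widebar{X}}$ identifies the left-hand sides of the definitions of $\acInfty$ and $\acLoc$ with the right-hand sides of \eqref{E:characterization_for_ri:infty} and \eqref{E:characterization_for_ri:zero}: the quantities $f\mapsto\|f^*\chi_{(a,\infty)}\|_{\widebar{Y}}$ and $f\mapsto\|f^*\chi_{(0,a)}\|_{\widebar{Y}}$ depend on $f$ only through $f^*$, and since $\mu(\RR)=\infty$ and $\mu$ is nonatomic, every nonincreasing function on $(0,\infty)$ of $\widebar{X}$-norm at most $1$ is the rearrangement of some $f\in\MRM$ (and conversely $\|f^*\|_{\widebar{X}}=\|f\|_{\widebar{X}}$ for every $f\in\widebar{X}$).

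For the inequality $\geq$ in the first identity, the key observation is the inclusion $\{|f|>\lambda\}\setminus E\subseteq\{|f\chi_{\RR\setminus E}|>\lambda\}$, which for any $E$ with $\mu(E)\leq a$ forces $\mu(\{|f\chi_{\RR\setminus E}|>\lambda\})\geq\mu(\{|f|>\lambda\})-a$, and hence the pointwise estimate $(f\chi_{\RR\setminus E})^*(t)\geq f^*(t+a)$. Rearrangement invariance of $\widebar{Y}$ together with the equimeasurability of $f^*(\cdot+a)$ and $f^*\chi_{(a,\infty)}$ then yields $\|f\chi_{\RR\setminus E}\|_Y\geq\|f^*\chi_{(a,\infty)}\|_{\widebar{Y}}$. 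For the matching upper bound, I would exhibit an optimal set: take $E_1=\{|f|>f^*(a)\}$, whose measure is at most $a$ by the definition of $f^*$, and use nonatomicity of $\mu$ together with the bound $\mu(\{|f|\geq f^*(a)\})\geq a$ (a consequence of right-continuity of the distribution function) to pick $E_2\subseteq\{|f|=f^*(a)\}$ with $\mu(E_1\cup E_2)=a$. A direct computation of the distribution function of $f\chi_{\RR\setminus(E_1\cup E_2)}$, splitting into the regimes $\lambda\geq f^*(a)$ and $\lambda<f^*(a)$, shows that its rearrangement equals $f^*(\cdot+a)$ exactly, so the infimum is attained.

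The second identity is handled analogously. The upper bound $(f\chi_E)^*\leq f^*\chi_{(0,a)}$ is immediate, since $|f\chi_E|\leq|f|$ gives $(f\chi_E)^*\leq f^*$ pointwise and $(f\chi_E)^*$ vanishes on $[\mu(E),\infty)\supseteq[a,\infty)$. The same set $E_1\cup E_2$ realizes this bound: an analogous case split on $\lambda$ in the distribution function of $f\chi_{E_1\cup E_2}$ yields $(f\chi_{E_1\cup E_2})^*=f^*\chi_{(0,a)}$ exactly, and the supremum is attained.

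I expect the main obstacle to be the sharpness step, specifically verifying the pointwise identities for the rearrangements of $f\chi_{\RR\setminus(E_1\cup E_2)}$ and $f\chi_{E_1\cup E_2}$: one must check carefully that the distribution-function computation yields these identities without hidden plateau effects when $\{|f|=f^*(a)\}$ has positive measure, and that the auxiliary set $E_2$ can always be constructed inside this level set. Everything else is bookkeeping with rearrangement-invariant norms and passage to the representation space.
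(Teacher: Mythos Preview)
Your approach is correct and shares the same core construction as the paper's proof: both build the optimal set by taking $E_1=\{|f|>f^*(a)\}$ and filling up to measure $a$ with a subset $E_2$ of the level set $\{|f|=f^*(a)\}$. The difference is organizational. You prove two \emph{exact} identities valid for every $f$ and every $a$, from which both directions of (i) and (ii) follow at once after passing to the representation space. The paper instead argues the forward and converse implications separately via one-sided inequalities; in the forward direction of (i) it approximates by simple functions to pass to the limit, and in the converse direction it first deduces $\lim_{t\to\infty}\fund_X(t)=\infty$ from \eqref{E:characterization_for_ri:infty} in order to guarantee that the level set $\{|f|=f^*(a)\}$ has finite measure, which it needs for its explicit computation of $\mu(F_2)$. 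Your construction sidesteps both detours: the pointwise inequality $(f\chi_{\RR\setminus E})^*(t)\geq f^*(t+a)$ holds directly without simple-function approximation, and your use of the bound $\mu(\{|f|\geq f^*(a)\})\geq a$ (which is all that is needed to locate $E_2$) is insensitive to whether that level set has finite or infinite measure. The packaging is therefore somewhat cleaner, though the underlying geometry is the same.
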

\begin{proof}
We start with the first equivalence. First, assume that $X\acInfty Y$ holds. Let $f\in\M(0,\infty)$ with $\|f\|_{\widebar{X}}\leq1$. Then there exists $\tilde{f}\in\MRM$ such that $\tilde{f}^*=f^*$ (see~\cite[Chapter~2, Corollary~7.8]{BS}). Let $\{s_n\}_{n=1}^\infty\subseteq\Mpl\RM$ be a sequence of nonnegative simple functions satisfying $s_n\nearrow |\tilde{f}|$ $\mu$-a.e. as $n\to\infty$. Consequently, we also have $s_n^*\nearrow \tilde{f}^*=f^*$ as $n\to\infty$. Moreover, it is straightforward to verify that
\begin{equation*}
    (s_n^*\chi_{(a, \infty)})^*\leq (s_n\chi_{\RR\setminus E})^* \quad \text{for all $n\in\N$ and $E\subseteq \RR$ with $\mu(E) \leq a$}.
\end{equation*}
Hence,
\begin{align}
    \|s_n^*\chi_{(a, \infty)}\|_{\widebar{Y}} &= \|(s_n^*\chi_{(a, \infty)})^*\|_{\widebar{Y}}\leq \inf_{\mu(E)\leq a}\|(s_n\chi_{\RR\setminus E})^*\|_{\widebar{Y}} \nonumber\\
		&= \inf_{\mu(E)\leq a}\|s_n\chi_{\RR\setminus E}\|_Y \leq \inf_{\mu(E)\leq a}\|\tilde{f}\chi_{\RR\setminus E}\|_Y \label{E:characterization_for_ri:5}
\end{align}
for every $n\in\N$. Since $s_n^*\nearrow f^*$ as $n\to\infty$ and
\begin{equation*}
    \|\tilde{f}\|_X = \|\tilde{f}^*\|_{\widebar{X}} = \|f^*\|_{\widebar{X}} = \|f\|_{\widebar{X}}\leq1,
\end{equation*}
\eqref{E:characterization_for_ri:5}, together with $X\acInfty Y$, yields \eqref{E:characterization_for_ri:infty}. 

For the converse implication, assume that \eqref{E:characterization_for_ri:infty} holds. Let $f\in\MRM$ be such that $\|f\|_X\leq1$, and let $a>0$. We now distinguish two cases. First, assume that $f^*(a) = 0$ and set
\begin{equation*}
    F = \{x\in\RR: |f(x)| > 0\}.
\end{equation*}
Then $\mu(F) \leq a$, and so
\begin{equation}\label{E:characterization_for_ri:2}
    \inf_{\mu(E)\leq a}\|f\chi_{\RR\setminus E}\|_Y \leq \|f\chi_{\RR\setminus F}\|_Y = 0 = \|f^*\chi_{(a,\infty)}\|_{\widebar{Y}}.
\end{equation}
Second, assume that $f^*(a) > 0$ and define
\begin{equation*}
    F_1 = \{x\in\RR: |f(x)| > f^*(a)\} \qquad \text{and} \qquad F_2 = \{x\in\RR: |f(x)| = f^*(a)\}.
\end{equation*}
We have 
\begin{equation}\label{E:characterization_for_ri:1}
    \mu(F_1) = |\{t>0: f^*(t) > f^*(a)\}| = \inf\{t>0: f^*(t) = f^*(a)\} \leq a
\end{equation}
and
\begin{align}
    \mu(F_2) &= \mu(\{x\in\RR: |f(x)| \geq f^*(a)\}) - \mu(F_1)  \nonumber\\
    &= \lim_{n\to\infty}\mu\Big( \Big\{ x\in\RR: |f(x)| > f^*(a) - \frac1{n} \Big\} \Big) - \mu(F_1) \nonumber\\
    &= \lim_{n\to\infty}\Big| \Big\{t>0: f^*(t) > f^*(a) - \frac1{n} \Big\} \Big| - \mu(F_1) \nonumber\\
    &= |\{t>0: f^*(t) \geq f^*(a) \}| - \mu(F_1) \nonumber\\
    &= \sup\{t>0: f^*(t) = f^*(a)\} - \inf\{t>0: f^*(t) = f^*(a)\},\label{E:characterization_for_ri:3}
\end{align}
where we used the fact that $\mu\Big( \Big\{ x\in\RR: |f(x)| > f^*(a) - \frac1{n} \Big\} \Big) < \infty$ for all sufficiently large $n\in\N$. Indeed, first, for each $a>0$, we observe that
\begin{equation*}
    \frac{\fund_Y(a)}{\fund_X(2a)} = \frac1{\fund_X(2a)} \|\chi_{(0,2a)}\chi_{(a, \infty)}\|_{\widebar{Y}} \leq \sup_{\|f\|_{\widebar{X}}\leq1} \|f^*\chi_{(a, \infty)}\|_{\widebar{Y}},
\end{equation*}
whence it follows that $\lim_{t\to\infty}\fund_X(t) = \infty$ thanks to \eqref{E:characterization_for_ri:infty}. Therefore, if for some $n\in\N$ with $f^*(a) - \frac1{n} > 0$ we had $\mu\Big( \Big\{ x\in\RR: |f(x)| > f^*(a) - \frac1{n} \Big\} \Big) = \infty$, then
\begin{equation*}
    1\geq \|f\|_X \geq \Big( f^*(a) - \frac1{n} \Big) \|\chi_{\{x\in\RR: |f(x)| > f^*(a) - \frac1{n}\}}\|_X = \infty,
\end{equation*}
which would be a contradiction. From \eqref{E:characterization_for_ri:1} and \eqref{E:characterization_for_ri:3} we have $a - \mu(F_1) \leq \mu(F_2)$. Therefore, we can find a set $F_3\subseteq F_2$ such that
\begin{equation*}
    \mu(F_3) = a - \mu(F_1).
\end{equation*}
Setting
\begin{equation*}
    F = F_1 \cup F_3,
\end{equation*}
we obtain
\begin{equation*}
    \mu(F) = \mu(F_1) + \mu(F_3) = a.
\end{equation*}
Moreover, observe that the functions $f\chi_{\RR\setminus F}$ and $f^*\chi_{(a,\infty)}$ are equimeasurable. Hence,
\begin{equation}\label{E:characterization_for_ri:4}
    \inf_{\mu(E)\leq a}\|f\chi_{\RR\setminus E}\|_Y \leq \|f\chi_{\RR\setminus F}\|_Y  =  \|f^*\chi_{(a,\infty)}\|_{\widebar{Y}}.
\end{equation}
Combining \eqref{E:characterization_for_ri:2}, \eqref{E:characterization_for_ri:4}, and using $\|f\|_X = \|f^*\|_{\widebar{X}}\leq1$, we conclude
\begin{equation*}
    \sup_{\|f\|_X\leq1}\inf_{\mu(E)\leq a}\|f\chi_{\RR\setminus E}\|_Y \leq \sup_{\|h\|_{\widebar{X}}\leq1}\|h^*\chi_{(a,\infty)}\|_{\widebar{Y}}.
\end{equation*}
Therefore, the validity of \eqref{E:characterization_for_ri:infty} implies $X\acInfty Y$, which completes the proof of the first equivalence.

We now turn our attention to the second equivalence. First, assume that $X\acLoc Y$. Let $f\in\M(0,\infty)$ be such that $\|f\|_{\widebar{X}}\leq1$, and let $a>0$. Then there is a function $g\in\MRM$ such that
\begin{equation}\label{E:characterization_for_ri:6}
    g^* = f^*\chi_{(0,a)}.
\end{equation}
Note that
\begin{equation}\label{E:characterization_for_ri:7}
    \|g\|_X = \|g^*\|_{\widebar{X}} = \|f^*\chi_{(0,a)}\|_{\widebar{X}}\leq 1.
\end{equation}
Furthermore, since $\mu(\{x\in\RR: g(x)\neq0\}) = |\{t>0:g^*(t)\neq0\}|$, we have $\mu(\{x\in\RR: g(x)\neq0\})\leq a$. Therefore, setting
\begin{equation*}
    E = \{x\in\RR: g(x)\neq0\},
\end{equation*}
we see that
\begin{equation}\label{E:characterization_for_ri:8}
    \mu(E)\leq a \qquad \text{and} \qquad g=g\chi_E.
\end{equation}
Hence, using \eqref{E:characterization_for_ri:6}, \eqref{E:characterization_for_ri:8}, and \eqref{E:characterization_for_ri:7}, we obtain
\begin{equation*}
    \|f^*\chi_{(0, a)}\|_{\widebar{Y}} = \|g^*\|_{\widebar{Y}} = \|g\|_Y = \|g\chi_E\|_Y \leq \sup_{\|h\|_X\leq1} \sup_{\mu(F)\leq a} \|h\chi_F\|_Y,
\end{equation*}
from which the validity of \eqref{E:characterization_for_ri:zero} follows. Finally, we prove the converse implication, which concludes the proof. Assume that \eqref{E:characterization_for_ri:zero} holds. Let $f\in\MRM$ and $E\subseteq\RR$ be such that $\|f\|_X\leq1$ and $\mu(E)\leq a$. Note that
\begin{equation*}
    (f\chi_E)^* = (f\chi_E)^*\chi_{(0,a)}.
\end{equation*}
Using this together with the fact that $\|(f\chi_E)^*\|_{\widebar{X}}\leq\|f\|_X\leq1$, we obtain
\begin{equation*}
    \|f\chi_E\|_Y = \|(f\chi_E)^*\|_{\widebar{Y}} = \|(f\chi_E)^*\chi_{(0,a)}\|_{\widebar{Y}} \leq \sup_{\|h\|_{\widebar{X}}\leq1} \|h^*\chi_{(0,a)}\|_{\widebar{Y}}.
\end{equation*}
It follows that \eqref{E:characterization_for_ri:zero} implies the validity of $X\acLoc Y$, as desired.
\end{proof}

By combining Theorem~\ref{thm:characterization_for_ri} with \eqref{prel:fund_ineq}, we obtain simple yet often useful sufficient conditions for the validity of $X\acInfty Y$ and $X\acLoc Y$ when $X$ and $Y$ are r.i.~q-BFSs.
\begin{corollary}\label{cor:suff_cond_for_qri}
Let $X$ and $Y$ be r.i.~q-BFSs. If
\begin{equation*}
    \lim_{a\to\infty} \Big\| \frac1{\fund_X}\chi_{(a, \infty)} \Big\|_{\widebar{Y}} = 0,
\end{equation*}
then $X\acInfty Y$. Similarly, if
\begin{equation*}
    \lim_{a\to0_+} \Big\| \frac1{\fund_X}\chi_{(0,a)} \Big\|_{\widebar{Y}} = 0,
\end{equation*}
then $X\acLoc Y$.
\end{corollary}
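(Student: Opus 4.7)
The plan is to reduce both claims to the representation-space characterizations furnished by Theorem~\ref{thm:characterization_for_ri}, invoking only the universal pointwise bound \eqref{prel:fund_ineq} together with the monotonicity property~(P2) of the norm $\|\cdot\|_{\widebar{Y}}$. More precisely, for any $f\in\M(0,\infty)$ with $\|f\|_{\widebar{X}}\leq 1$, applying \eqref{prel:fund_ineq} inside the representation space (and using that $\fund_X=\fund_{\widebar{X}}$) gives $f^*(t)\leq 1/\fund_X(t)$ for every $t\in(0,\infty)$. Multiplying this inequality by the indicators $\chi_{(a,\infty)}$ and $\chi_{(0,a)}$ yields the pointwise estimates
\[
f^*\chi_{(a,\infty)}\leq \frac{1}{\fund_X}\chi_{(a,\infty)} \quad \text{and} \quad f^*\chi_{(0,a)}\leq \frac{1}{\fund_X}\chi_{(0,a)}
\]
on $(0,\infty)$.

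Applying property~(P2) of $\|\cdot\|_{\widebar{Y}}$ and taking the supremum over all $f$ with $\|f\|_{\widebar{X}}\leq 1$ therefore produces
\[
\sup_{\|f\|_{\widebar{X}}\leq 1}\|f^*\chi_{(a,\infty)}\|_{\widebar{Y}}\leq \Big\|\tfrac{1}{\fund_X}\chi_{(a,\infty)}\Big\|_{\widebar{Y}}
\]
and the analogous estimate for $\chi_{(0,a)}$. Under the respective hypotheses of the corollary, the right-hand sides tend to $0$ as $a\to\infty$ or $a\to 0_+$, so conditions \eqref{E:characterization_for_ri:infty} and \eqref{E:characterization_for_ri:zero} of Theorem~\ref{thm:characterization_for_ri} hold, and $X\acInfty Y$ respectively $X\acLoc Y$ follow.

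There is essentially no obstacle to overcome: the corollary is really a direct packaging of \eqref{prel:fund_ineq} with Theorem~\ref{thm:characterization_for_ri}, and the proof consists only in reading off these two ingredients. The only (very minor) subtlety is remembering that the pointwise bound from \eqref{prel:fund_ineq} transfers to the representation space via $\fund_X=\fund_{\widebar{X}}$, so no additional work beyond citing the preliminaries is required.
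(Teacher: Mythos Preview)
Your proof is correct and matches the paper's approach exactly: the paper states the corollary as an immediate consequence of combining Theorem~\ref{thm:characterization_for_ri} with the pointwise estimate~\eqref{prel:fund_ineq}, which is precisely what you have written out in detail.
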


When $X$ and $Y$ are BFSs, it is well known that for the ``non-localized'' relation $\ac$, the fact that $X\ac Y$ implies $Y'\ac X'$ (see, e.g.,~\cite[Theorem~7.11.3]{PKJFbook}). Moreover, it can be readily checked that the proof carries over verbatim to the setting of q-BFSs satisfying property~(P5). In fact, by a straightforward modification of the argument, one can show that the same holds for our ``localized'' relation $\acLoc$; that is, $X\acLoc Y$ implies $Y'\acLoc X'$ for q-BFSs $X$ and $Y$ satisfying property~(P5). The proof of this fact is rather simple because both the definition of the associate norm \eqref{prel:asoc_norm} and that of the relation $\acLoc$ involve suprema whose order can be easily exchanged. However, the situation becomes more complicated for the relation $\acInfty$ because its definition involves a combination of a supremum and an infimum. Nevertheless, we have the following result.
\begin{proposition}\label{prop:duality}
    Let $X$ and $Y$ be q-BFSs satisfying property~(P5). Assume that $\lim_{a\to\infty} \fundMin_{Y'}(a) = \infty$. Then $X \acInfty Y$ implies $Y' \acInfty X'$.
\end{proposition}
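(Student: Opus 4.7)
The plan is to invoke the functional characterization of the relation $\acInfty$ given by Theorem~\ref{thm:characterization_infty_convergence}, which reduces the problem to a statement about sequences. Since $Y$ satisfies property (P5), its associate $Y'$ is a BFS, hence admissible; moreover the standing hypothesis supplies $\lim_{a\to\infty}\fundMin_{Y'}(a)=\infty$. Therefore, to establish $Y'\acInfty X'$, it suffices to prove that every sequence $\{g_n\}_{n=1}^\infty\subseteq Y'$ with $\sup_n\|g_n\|_{Y'}<\infty$ and $\lim_{n\to\infty}\|g_n\|_{L^\infty\RM}=0$ must satisfy $\lim_{n\to\infty}\|g_n\|_{X'}=0$.

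The key step is to exploit the definition of the associate norm \eqref{prel:asoc_norm} by splitting the test function $f$ across a set provided by $X\acInfty Y$. Fix $\varepsilon>0$. By $X\acInfty Y$ there exists $a>0$ such that for every $f\in X$ with $\|f\|_X\leq 1$ one can choose a set $E_f\subseteq\RR$ with $\mu(E_f)\leq a$ and $\|f\chi_{\RR\setminus E_f}\|_Y\leq\varepsilon$. For such $f$, Hölder's inequality \eqref{prel:Holder} on $\RR\setminus E_f$ and a trivial $L^\infty$–$L^1$ estimate on $E_f$ (using that $X'$ is a BFS, so $\fundMax_{X'}(a)<\infty$, and $\int_{E_f}|f|\,d\mu\le\|f\|_X\|\chi_{E_f}\|_{X'}\le\fundMax_{X'}(a)$) give
\begin{equation*}
    \int_{\RR}|f||g_n|\,\dd\mu \le \|g_n\|_{L^\infty\RM}\,\fundMax_{X'}(a) + \varepsilon\,\|g_n\|_{Y'}.
\end{equation*}
Taking the supremum over $\|f\|_X\leq 1$ yields $\|g_n\|_{X'}\le \|g_n\|_{L^\infty\RM}\fundMax_{X'}(a) + \varepsilon\|g_n\|_{Y'}$. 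Letting $n\to\infty$ and using the boundedness of $\{g_n\}$ in $Y'$ and $\|g_n\|_{L^\infty}\to 0$, we obtain $\limsup_{n\to\infty}\|g_n\|_{X'}\le\varepsilon\sup_n\|g_n\|_{Y'}$; since $\varepsilon$ was arbitrary, $\|g_n\|_{X'}\to 0$.

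The only subtlety worth flagging is why the hypothesis $\lim_{a\to\infty}\fundMin_{Y'}(a)=\infty$ must be assumed rather than derived: Proposition~\ref{prop:necessaryCond_fundFuncs_decay}(i) ensures $\fundMin_X\to\infty$ from $X\acInfty Y$, but provides no information on $\fundMin_{Y'}$, which is the space playing the role of the first argument in the dual relation. Without this assumption the characterization of Theorem~\ref{thm:characterization_infty_convergence} cannot be applied, and indeed the conclusion may fail; the rest of the argument is an essentially routine Hölder-duality computation whose cleanliness depends on the fact that each $g_n$ is tested against a \emph{single} $f$, so the awkward supremum/infimum swap in the definition of $\acInfty$ is never actually performed.
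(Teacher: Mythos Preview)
Your proof is correct and, if anything, slightly cleaner than the paper's. Both arguments rest on the same H\"older-duality splitting: given $f$ in the unit ball of $X$, pick a set $E_f$ of bounded measure from $X\acInfty Y$, then estimate $\int_{E_f}|f||g|$ via $\|g\|_{L^\infty}\cdot\fundMax_{X'}(a)$ and $\int_{\RR\setminus E_f}|f||g|$ via $\varepsilon\|g\|_{Y'}$.

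The difference is organizational. The paper works directly from the definition of $Y'\acInfty X'$: it must produce, for each $g$ in the unit ball of $Y'$, an explicit set $E_g$ of controlled measure so that $\|g\chi_{\RR\setminus E_g}\|_{X'}$ is small. This forces the construction of $E_g$ as a level set $\{|g|>\varepsilon/\fundMax_{X'}(a_1)\}$, and it is here that the hypothesis $\fundMin_{Y'}\to\infty$ is invoked, to bound $\mu(E_g)$ via the generalized inverse $F$. You instead route through Theorem~\ref{thm:characterization_infty_convergence}, which has already packaged this level-set construction; the hypothesis $\fundMin_{Y'}\to\infty$ enters only as the side condition in statement~(ii) of that theorem, and you never need to build $E_g$ by hand. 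Your approach buys economy; the paper's approach is self-contained (it does not rely on the admissibility of $Y'$ or on Theorem~\ref{thm:characterization_infty_convergence}, and is what one would write if proving this proposition first).
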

\begin{proof}
First, note that $X'$ and $Y'$ are BFSs, since $X$ and $Y$ satisfy property~(P5). Assume that $X \acInfty Y$. Let $\varepsilon>0$. Choose $a_1>0$ sufficiently large so that
\begin{equation}\label{E:duality:1}
    \sup_{\|f\|_X\leq1} \inf_{\mu(E)\leq a_1} \|f\chi_{\RR\setminus E}\|_Y \leq \frac{\varepsilon}{2}.
\end{equation}
Let $F$ denote the right-continuous generalized inverse of the function $\fundMin_{Y'}$ (recall~\eqref{E:Lions_compactness_lemma:10}). Since $\lim_{a\to\infty} \fundMin_{Y'}(a) = \infty$, we have $F(t)<\infty$ for every $t>0$. Define
\begin{equation}\label{E:duality:9}
    a = F\Big( \frac{\fundMax_{X'}(a_1)}{\varepsilon} \Big) < \infty.
\end{equation}
For $g\in\MRM$, denote by $E_g$ the set
\begin{equation}\label{E:duality:5}
    E_g = \Big\{ x\in\RR: |g(x)| > \frac{\varepsilon}{\fundMax_{X'}(a_1)} \Big\}.
\end{equation}
Arguing as in \eqref{E:Lions_compactness_lemma:5}, we obtain
\begin{equation}\label{E:duality:2}
    \mu(E_g) \leq F\Big( \frac{\fundMax_{X'}(a_1)}{\varepsilon} \Big) = a \qquad \text{for every $g\in\MRM$ with $\|g\|_{Y'}\leq1$}.
\end{equation}
Now, let $g\in\MRM$ be such that $\|g\|_{Y'}\leq1$. Then
\begin{align}
    \sup_{\|g\|_{Y'}\leq1}\inf_{\mu(E)\leq a}\|g\chi_{\RR\setminus E}\|_{X'} &\leq \sup_{\|g\|_{Y'}\leq1}\|g\chi_{\RR\setminus E_g}\|_{X'} \nonumber\\
    &= \sup_{\substack{\|g\|_{Y'}\leq1\\ \|f\|_X\leq1}} \int_{\RR} |f(x)||g(x)|\chi_{\RR\setminus E_g}(x) \dd\mu(x) \label{E:duality:6}
\end{align}
using \eqref{E:duality:2} and \eqref{prel:asoc_norm}. By \eqref{E:duality:1}, for every $f\in\MRM$ with $\|f\|_{X}\leq1$, there exists a set $G_f$ such that
\begin{equation}\label{E:duality:3}
    \mu(G_f)\leq a_1
\end{equation}
and
\begin{equation}\label{E:duality:4}
    \|f\chi_{\RR\setminus G_f}\|_{Y} \leq \varepsilon.
\end{equation}
Now, for all $\|g\|_{Y'}\leq1$ and $\|f\|_X\leq1$, we have
\begin{align}
    \int_{\RR} |f(x)||g(x)|\chi_{\RR\setminus E_g}(x) \dd\mu(x) &= \int_{\RR} |f(x)||g(x)|\chi_{\RR\setminus E_g}(x)\chi_{\RR\setminus G_f}(x) \dd\mu(x) \nonumber\\
    &\quad+ \int_{\RR} |f(x)||g(x)|\chi_{\RR\setminus E_g}(x)\chi_{G_f}(x) \dd\mu(x) \nonumber\\
    &\leq \|f\chi_{\RR\setminus G_f}\|_{Y} \|g\|_{Y'} + \|f\|_{X} \|g\chi_{\RR\setminus E_g}\chi_{G_f}\|_{X'} \nonumber\\
    &\leq \varepsilon + \|g\chi_{\RR\setminus E_g}\chi_{G_f}\|_{X'} \label{E:duality:7}
\end{align}
by the H\"older inequality \eqref{prel:Holder} and \eqref{E:duality:4}. Furthermore, using \eqref{E:duality:5} and \eqref{E:duality:3}, we obtain
\begin{equation}\label{E:duality:8}
    \|g\chi_{\RR\setminus E_g}\chi_{G_f}\|_{X'} \leq \frac{\varepsilon}{\fundMax_{X'}(a_1)} \fundMax_{X'}(a_1) = \varepsilon.
\end{equation}
Hence, combining \eqref{E:duality:6}, \eqref{E:duality:7}, and \eqref{E:duality:8}, we conclude
\begin{equation*}
    \sup_{\|g\|_{Y'}\leq1}\inf_{\mu(E)\leq a}\|g\chi_{\RR\setminus E}\|_{X'} \leq 2\varepsilon,
\end{equation*}
from which $Y' \acInfty X'$ follows.
\end{proof}

\begin{corollary}\label{cor:duality_for_Y_ri}
Let $X$ and $Y$ be q-BFSs satisfying property~(P5). Assume that the family of simple averaging operators $\{A_E\}_E$ is uniformly bounded on $X$ and that $Y$ is rearrangement invariant. Then $X \acInfty Y$ implies $Y' \acInfty X'$.
\end{corollary}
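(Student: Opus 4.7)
The plan is to reduce the corollary to Proposition~\ref{prop:duality}. Under the given hypotheses, both $X'$ and $Y'$ are Banach function spaces, so the only assumption of Proposition~\ref{prop:duality} that is not immediately at hand is the condition
\begin{equation*}
    \lim_{a\to\infty} \fundMin_{Y'}(a) = \infty.
\end{equation*}
The entire task is therefore to deduce this divergence from $X \acInfty Y$, the rearrangement invariance of $Y$, and the uniform boundedness of $\{A_E\}_E$ on $X$.

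First, I would rewrite this condition in terms of $\fund_Y$. Since $Y$ is an r.i.~q-BFS with property~(P5), its associate $Y'$ is an r.i.~BFS; in particular $\fundMin_{Y'} = \fund_{Y'}$, and the classical identity \eqref{prel:fund_X_and_asocX_for_ri} applied to $Y'$ gives $\fund_{Y'}(a)\,\fund_{Y''}(a) = a$. Combining this with the pointwise bound $\fund_{Y''} \leq \fund_Y$ (an immediate consequence of \eqref{prel:X_into_second_asoc} applied to $f=\chi_E$), I obtain
\begin{equation*}
    \fund_{Y'}(a) \geq \frac{a}{\fund_Y(a)}, \qquad a \in (0,\infty).
\end{equation*}
Hence it suffices to show that $\fund_Y(a)/a \to 0$ as $a \to \infty$.

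The main (and only nontrivial) step is to establish this decay. Proposition~\ref{prop:necessaryCond_fundFuncs_decay}(i), together with $\fundMin_Y = \fund_Y$ (by rearrangement invariance of $Y$), converts the hypothesis $X \acInfty Y$ into $\fund_Y(a)/\fundMin_X(2a) \to 0$. The remaining ingredient is an $O(a)$ upper bound on $\fundMin_X(2a)$, and this is precisely where the uniform boundedness of $\{A_E\}_E$ on $X$ enters: by Proposition~\ref{prop:E:fund_X_and_X'_quasi}(ii) there exists $C>0$ with $\fundMin_X(2a)\,\fundMax_{X'}(2a) \leq 2Ca$, and since $X'$ is a BFS (hence admissible) and $\fundMax_{X'}$ is nondecreasing, $\fundMax_{X'}(2a) \geq \fundMax_{X'}(2) > 0$ for $a \geq 1$. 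Thus $\fundMin_X(2a) = O(a)$, and dividing yields $\fund_Y(a)/a \to 0$. This verifies the missing hypothesis of Proposition~\ref{prop:duality}, which then delivers the desired implication $Y' \acInfty X'$.

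I do not foresee any genuine obstacle: the argument is a short chain of previously established facts. The one point worth flagging is that $X$ need not be rearrangement invariant, so the necessary condition from Proposition~\ref{prop:necessaryCond_fundFuncs_decay}(i) is naturally asymmetric in $\fundMin_X$; the role of the uniform boundedness hypothesis is precisely to neutralize this asymmetry by controlling $\fundMin_X(2a)$ linearly in $a$.
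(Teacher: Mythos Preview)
Your argument is correct and follows essentially the same route as the paper's: both proofs reduce to Proposition~\ref{prop:duality}, invoke Proposition~\ref{prop:necessaryCond_fundFuncs_decay}(i) for the ratio $\fund_Y(a)/\fundMin_X(2a)\to 0$, and use Proposition~\ref{prop:E:fund_X_and_X'_quasi} together with the uniform boundedness of $\{A_E\}_E$ to control the growth of $\fundMin_X$. The only cosmetic difference is that the paper shows $\fundMin_{X'}(2a)/\fund_{Y'}(a)\to 0$ and reads off $\fund_{Y'}(a)\to\infty$ from the positive lower bound on $\fundMin_{X'}$, whereas you first isolate $\fund_Y(a)/a\to 0$ and then pass to $\fund_{Y'}$; the underlying estimates are the same.
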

\begin{proof}
In view of Proposition~\ref{prop:duality}, it suffices to show that $\lim_{a\to\infty}\fundMin_{Y'}(a) = \lim_{a\to\infty}\fund_{Y'}(a) = \infty$. To this end, note that
\begin{equation}\label{E:duality_for_Y_ri:1}
    \lim_{a\to\infty} \frac{\fund_Y(a)}{\fundMin_X(2a)} = 0
\end{equation}
by Proposition~\ref{prop:necessaryCond_fundFuncs_decay}(i). Using Proposition~\ref{prop:E:fund_X_and_X'_quasi}, we have
\begin{equation*}
    \fundMin_{X'}(2a) \leq C \frac{a}{\fundMax_X(2a)} \quad \text{for every $a\in(0, \infty)$},
\end{equation*}
where $C>0$ is a constant independent of $a$, and
\begin{equation*}
    \fund_{Y'}(a) \geq \frac{a}{\fund_Y(a)}  \quad \text{for every $a\in(0, \infty)$}.
\end{equation*}
Hence,
\begin{equation*}
    \frac{\fundMin_{X'}(2a)}{\fund_{Y'}(a)} \leq C \frac{\fund_Y(a)}{\fundMax_X(2a)} \leq C \frac{\fund_Y(a)}{\fundMin_X(2a)} \quad \text{for every $a\in(0, \infty)$}.
\end{equation*}
Consequently,
\begin{equation*}
   \lim_{a\to \infty} \frac{\fundMin_{X'}(2a)}{\fund_{Y'}(a)} = 0,
\end{equation*}
by \eqref{E:duality_for_Y_ri:1}. It follows that $\lim_{a\to\infty}\fund_{Y'}(a) = \infty$, which completes the proof.
\end{proof}

\begin{remark}\label{rem:duality_acInfty_bothRI}
If both $X$ and $Y$ are r.i.~BFSs, then $X \acInfty Y$ if and only if $Y' \acInfty X'$. This follows from \eqref{prel:X_is_double_assoc}, Corollary~\ref{cor:duality_for_Y_ri}, and Remark~\ref{rem:simple_aver_unif_bd_on_ri}.
\end{remark}

The space $L^\infty$ plays a special role in the theory of (rearrangement-invariant) (quasi-)Banach function spaces because its properties are often in a sense extremal and exceptional. We conclude this section with a theorem characterizing the validity of $X\acInfty L^\infty$, but first we need an auxiliary definition.
\begin{definition}
Let $X$ and $Y$ be q-BFSs. We write $X\weakacInfty Y$ if there exists $a_0\in(0,\infty)$ such that
\begin{equation*}
    \sup_{\|f\|_X\leq 1} \inf_{\mu(E)\leq a_0} \|f\chi_{\RR\setminus E}\|_Y < \infty.
\end{equation*}
\end{definition}

\begin{remark}\label{rem:embInfty}
    Clearly, if $X\acInfty Y$, then $X\weakacInfty Y$. The converse implication, however, is false (recall Proposition~\ref{prop:X_acInfty_X_never}). Furthermore, by a suitable modification of the proof of Proposition~\ref{thm:characterization_for_ri}(i), it is not difficult to see that $X\weakacInfty Y$ if and only if
    \begin{equation*}
    \sup_{\|f\|_{\widebar{X}}\leq1} \|f^*\chi_{(a_0, \infty)}\|_{\widebar{Y}} < \infty \quad \text{for some/any $a_0>0$},
    \end{equation*}
    provided that $X$ and $Y$ are rearrangement invariant. Finally, by suitably modifying the proof of Proposition~\ref{prop:duality} (essentially, by replacing $\varepsilon$ in~\eqref{E:duality:9} with a sufficiently large constant $M>0$ so that the function $F$ at the considered point is finite), one can show that
    \begin{equation}\label{E:X_emb_infty_Y_dual}
        X\weakacInfty Y \quad\text{implies}\quad Y'\weakacInfty X'
    \end{equation}
    for all q-BFSs $X$ and $Y$ satisfying property~(P5), whether $\lim_{t\to\infty}\fundMin_{Y'}(t)=\infty$ or not.
\end{remark}

\begin{theorem}\label{thm:char_L_inf_alt}
Let $X$ be an admissible q-BFS. The following two statements are equivalent.
\begin{enumerate}[(i)]
    \item $X\acInfty L^\infty\RM$.
    \item $\lim_{t\to\infty}\fundMin_X(t)=\infty$.
\end{enumerate}
Furthermore, either implies
\begin{enumerate}[(i)]\setcounter{enumi}{2}
    \item $L^\infty\RM \weakacInfty X$ does not hold,
\end{enumerate}
which in turn implies
\begin{enumerate}[(i)]\setcounter{enumi}{3}
    \item $\lim_{t\to\infty}\fundMax_X(t)=\infty$.
\end{enumerate}
Finally, if $X$ is rearrangement invariant, all four statements are equivalent to each other, with (ii) and (iv) reading as
\begin{equation*}
    \lim_{t\to\infty}\fund_X(t) = \infty.
\end{equation*}
\end{theorem}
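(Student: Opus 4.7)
The plan is to read (i) $\Leftrightarrow$ (ii) off Theorem~\ref{thm:characterization_infty_convergence}, prove (ii) $\Rightarrow$ (iii) by a test-function argument with characteristic functions of large-measure sets, prove (iii) $\Rightarrow$ (iv) by contrapositive via property~(P3), and finally invoke the identity $\fundMin_X = \fundMax_X = \fund_X$ to collapse the two endpoint statements in the rearrangement-invariant case.

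For (i) $\Leftrightarrow$ (ii), I would apply Theorem~\ref{thm:characterization_infty_convergence} with $Y = L^\infty\RM$. The second half of condition~(ii) in that theorem\textemdash that every bounded sequence $\{f_n\}\subseteq X$ with $\|f_n\|_{L^\infty\RM}\to 0$ satisfies $\|f_n\|_Y\to 0$\textemdash is tautological when $Y = L^\infty\RM$, so the theorem reduces precisely to the equivalence of (i) with $\lim_{t\to\infty}\fundMin_X(t)=\infty$.

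For (ii) $\Rightarrow$ (iii), I would argue by contradiction. Assume both (ii) and $L^\infty\RM \weakacInfty X$, so there exist $a_0 > 0$ and $M<\infty$ with $\sup_{\|f\|_{L^\infty\RM}\leq 1} \inf_{\mu(E)\leq a_0} \|f\chi_{\RR\setminus E}\|_X \leq M$. Using $\sigma$-finiteness together with $\mu(\RR) = \infty$, pick sets $F\subseteq\RR$ of finite but arbitrarily large measure and test with $f = \chi_F$. For any $E$ with $\mu(E)\leq a_0$ one has $\mu(F\setminus E) \geq \mu(F) - a_0$, so by the definition and monotonicity of $\fundMin_X$, $\|\chi_{F\setminus E}\|_X \geq \fundMin_X(\mu(F)-a_0)$. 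Taking the infimum over $E$ yields $\fundMin_X(\mu(F)-a_0) \leq M$ for all such $F$, contradicting (ii) when $\mu(F)\to\infty$.

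For (iii) $\Rightarrow$ (iv), I would argue by contrapositive. Since $\fundMax_X$ is nondecreasing, the failure of (iv) forces $\fundMax_X$ to be bounded on $[0,\infty)$ by some $M<\infty$. Writing $\RR = \bigcup_{n=1}^\infty \RR_n$ with $\RR_n \nearrow \RR$ and $\mu(\RR_n)<\infty$, property~(P3) gives $\|\chi_{\RR}\|_X = \lim_{n\to\infty}\|\chi_{\RR_n}\|_X \leq M$, and then property~(P2) yields $\|f\|_X \leq \|\chi_{\RR}\|_X \leq M$ whenever $\|f\|_{L^\infty\RM}\leq 1$. Choosing $E = \emptyset$ in the definition of $\weakacInfty$ (with any $a_0>0$) then shows that $L^\infty\RM \weakacInfty X$ holds, contradicting (iii). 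In the rearrangement-invariant case the identities $\fundMin_X = \fundMax_X = \fund_X$ identify conditions (ii) and (iv), closing the cycle and yielding full equivalence of (i)--(iv). I expect the test-function argument in (ii) $\Rightarrow$ (iii) to be the only genuinely substantive step; the remaining implications are one-line appeals to the machinery developed in Sections~\ref{sec:extremal_fund_funcs}--\ref{sec:uniform_decay}.
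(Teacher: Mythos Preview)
Your proposal is correct and essentially matches the paper's proof: the paper likewise reads (i) $\Leftrightarrow$ (ii) off Theorem~\ref{thm:characterization_infty_convergence} (together with Proposition~\ref{prop:necessaryCond_fundFuncs_decay}), proves (ii) $\Rightarrow$ (iii) by testing with characteristic functions $\chi_F$ of large measure and bounding $\|\chi_{F\setminus E}\|_X$ from below via $\fundMin_X$, and proves (iii) $\Rightarrow$ (iv) by contrapositive via $\|\chi_\RR\|_X < \infty$. The only cosmetic difference is that in (ii) $\Rightarrow$ (iii) the paper fixes $\mu(F)=2a$ and lets $a\to\infty$, whereas you fix $a_0$ and let $\mu(F)\to\infty$; both yield the same contradiction.
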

\begin{proof}
The equivalence of (i) and (ii) follows from Proposition~\ref{prop:necessaryCond_fundFuncs_decay} and Theorem~\ref{thm:characterization_infty_convergence}.

Assume that (iii) does not hold, that is, there exists $a_0\in(0,\infty)$ such that
\begin{equation}\label{E:char_L_inf_alt:1}
    \sup_{\|f\|_{L^\infty\RM}\leq1} \inf_{\mu(E)\leq a} \|f\chi_{\RR\setminus E}\|_X \leq \sup_{\|f\|_{L^\infty\RM}\leq1} \inf_{\mu(E)\leq a_0} \|f\chi_{\RR\setminus E}\|_X < \infty
\end{equation}
for every $a\in[a_0, \infty)$. Fix such an $a\in[a_0, \infty)$ and choose $F\subseteq \RR$ with $\mu(F)=2a$. Then, for every $\mu(E)\leq a$, we have $\fundMin_X(a) \leq \|\chi_{F\setminus E}\|_X = \|\chi_F\chi_{\RR\setminus E}\|_X$.
Hence,
\begin{equation*}
    \fundMin_X(a) \leq \inf_{\mu(E)\leq a} \|\chi_F\chi_{\RR\setminus E}\|_X \leq \sup_{\|f\|_{L^\infty\RM}\leq1} \inf_{\mu(E)\leq a} \|f\chi_{\RR\setminus E}\|_X.
\end{equation*}
Combining this with \eqref{E:char_L_inf_alt:1}, and recalling that $a\in[a_0, \infty)$ was arbitrary, we conclude that $\lim_{t\to\infty}\fundMin_X(t)<\infty$. In other words, (ii) does not hold.

Finally, we show that (iii) implies (iv). Suppose that (iv) fails, which means that we have $\lim_{t\to\infty}\fundMax_X(t) < \infty$. Consequently,
\begin{equation*}
    \|\chi_{\RR}\|_X < \infty.
\end{equation*}
Hence,
\begin{equation*}
    \sup_{\|f\|_{L^\infty\RM}\leq1} \inf_{\mu(E)\leq 1} \|f\chi_{\RR\setminus E}\|_X \leq \|\chi_{\RR}\|_X < \infty.
\end{equation*}
It follows that $L^\infty\RM \weakacInfty X$ holds. In other words, (iii) does not hold.
\end{proof}

\begin{remark}
Note that we never have $L^\infty\RM\acInfty X$ in view of Proposition~\ref{prop:necessaryCond_fundFuncs_decay}(i).
\end{remark}

\section{Uniform decay and endpoint spaces}\label{sec:endpoints}
There always exist the smallest and largest rearrangement-invariant Banach function spaces, the so-called \emph{endpoint spaces}, having a prescribed fundamental function. These spaces play an important role in the theory of r.i.~BFSs (see, e.g., \cite{BS,KPS}). This section is devoted to the study of the relation $\acInfty$ in connection with the endpoint spaces. Although their theory can be extended, to some extent, to the setting of r.i.~q-BFSs (see~\cite[Section~4]{MNPT:25}), we restrict our attention to BFSs for simplicity's sake. Furthemore, we also study only the relation $\acInfty$ because similar results for the relation $\acLoc$ can be easily deduced from those contained in \cite{S:12} for the non-localized relation $\ac$.

Given a quasiconcave function $\varphi\colon [0, \infty) \to [0, \infty)$, the \emph{Marcinkiewicz space} $M_\varphi\RM$ is defined via the rearrangement-invariant Banach function norm
\begin{equation*}
    \|f\|_{M_\varphi\RM} = \sup_{t\in(0, \infty)} f^{**}(t)\varphi(t),\ f\in\Mpl\RM.
\end{equation*}
The fundamental function of $M_\varphi\RM$ satisfies $\fund_{M_\varphi} = \varphi$. For example, consider $\varphi(t) = t^{1/p}$, $t\in(0, \infty)$, for $p\in[1,\infty]$. When $p\in(1, \infty]$, $M_\varphi$ is the Lorentz space $L^{p,\infty}$, up to equivalence of norms (see~Section~\ref{sec:examples} for more information on Lorentz spaces). When $p=1$, $M_\varphi$ coincides with $L^1$.

Every r.i.~BFS $X$ satisfies the endpoint embedding
\begin{equation}\label{prel:endpoint_embedding_M}
     X \hookrightarrow M_{\fund_X}\RM,
\end{equation}
where $\hookrightarrow$ denotes a continuous embedding\textemdash that is, for (quasi-)normed spaces $A$ and $B$, we write $A\hookrightarrow B$ if $A\subseteq B$ and there exists a constant $C>0$ such that $\|x\|_{B} \leq C\|x\|_A$ for every $x\in A$. The embedding \eqref{prel:endpoint_embedding_M} holds with $C = 1$.

Every r.i.~BFS $X$ can be equivalently renormed by another rearrangement-invariant Banach function norm so that its new fundamental function is concave. Moreover, this can be done in such a way that the new fundamental function coincides with the least concave majorant of the original fundamental function. The least concave majorant $\tilde{\varphi}\colon[0, \infty) \to [0, \infty)$ of a quasiconcave function $\varphi$ satisfies
\begin{equation}\label{prel:concave_majorant}
\frac{1}{2} \tilde{\varphi} \leq \varphi \leq \tilde{\varphi}.
\end{equation}
Therefore, we may assume, without loss of generality, that the fundamental function of an r.i.~BFS is concave whenever convenient.

Assuming $\varphi\colon[0, \infty) \to [0, \infty)$ is concave and nontrivial, the \emph{Lorentz endpoint space} $\Lambda_\varphi\RM$ is defined by means of the rearrangement-invariant Banach function norm
\begin{equation*}
    \|f\|_{\Lambda_\varphi\RM} = \int_0^\infty f^*(t) \dd{\varphi(t)} = \|f\|_{L^\infty\RM}\varphi(0_+) + \int_0^\infty f^*(t) \varphi'(t) \dd{t}
\end{equation*}
for any $f\in\Mpl\RM$. We have $\fund_{\Lambda_\varphi} = \varphi$. For example, consider once more $\varphi(t) = t^{1/p}$, $t\in(0, \infty)$, for $p\in[1,\infty]$. When $p\in[1, \infty)$, $\Lambda_\varphi$ is the Lorentz space $L^{p,1}$, up to a constant multiple of the norm. When $p=\infty$, $\Lambda_\varphi$ coincides with $L^\infty$.

Every r.i.~BFS $X$ satisfies the endpoint embedding
\begin{equation*}
    \Lambda_{\tilde{\fund}_X}\RM \hookrightarrow X,
\end{equation*}
where $\tilde{\fund}_X$ is the least concave majorant of $\fund_X$. Furthermore,
\begin{equation}\label{prel:endpoint_assoc}
(\Lambda_{\varphi})'\RM = M_{\tilde{\psi}}\RM,    
\end{equation}
where $\tilde{\psi}$ denotes the least concave majorant of the function $\psi(t) = t/\varphi(t)$, $t\in(0, \infty)$.

\begin{theorem}\label{thm:characterization_Marcz_endpoints}
Let $X$ be an r.i.~BFS. Assume that
\begin{equation}\label{E:characterization_Marcz_endpoints:fund_cond}
	\lim_{t\to \infty}\frac{t}{\varphi_X(t)} = \infty.
\end{equation}
Let $\varphi\colon[0, \infty) \to [0, \infty)$ be a nontrivial concave function. Denote by $\psi$ the least concave majorant of the function $t\mapsto t/\varphi(t)$. Then the following three statements are equivalent.
\begin{enumerate}[(i)]
\item $M_\varphi\RM \acInfty X$;
\item $\lim_{a\to\infty} \|f^*\chi_{(a,\infty)}\|_{\widebar{X}} = 0$ for every $f\in M_\varphi(0,\infty)$;
\item $\lim_{a\to\infty} \|\psi'\chi_{(a,\infty)}\|_{\widebar{X}} = 0$.
\end{enumerate}
\end{theorem}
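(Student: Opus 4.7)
The implications (i)~$\Rightarrow$~(ii) and (ii)~$\Rightarrow$~(iii) are the easy ones. For (i)~$\Rightarrow$~(ii), I would simply invoke Theorem~\ref{thm:characterization_for_ri}(i), which reformulates (i) as $\sup_{\|g\|_{\widebar{M_\varphi}}\leq 1}\|g^*\chi_{(a,\infty)}\|_{\widebar X}\to 0$ as $a \to \infty$; the scaling estimate $\|f^*\chi_{(a,\infty)}\|_{\widebar X}\leq \|f\|_{\widebar{M_\varphi}}\sup_{\|g\|_{\widebar{M_\varphi}}\leq 1}\|g^*\chi_{(a,\infty)}\|_{\widebar X}$ then yields (ii). For (ii)~$\Rightarrow$~(iii), my plan is to apply (ii) to the test function $f = \psi'$. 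Since $\psi$ is concave and nondecreasing, $\psi'$ is nonincreasing on $(0,\infty)$, so $(\psi')^* = \psi'$; moreover $(\psi')^{**}(t) \leq \psi(t)/t \leq 2/\varphi(t)$ (using \eqref{prel:concave_majorant}) shows $\|\psi'\|_{\widebar{M_\varphi}}\leq 2$, hence $\psi' \in M_\varphi(0,\infty)$. Applying (ii) with this $f$ produces (iii) verbatim.

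The main obstacle is (iii)~$\Rightarrow$~(i), and my plan is to proceed by duality. The identity \eqref{prel:endpoint_assoc}, applied with $\Lambda_\psi$ in place of $\Lambda_\varphi$, gives $(\Lambda_\psi)' = M_{\widetilde{t/\psi}}$. Since \eqref{prel:concave_majorant} yields $\varphi/2 \leq t/\psi \leq \varphi$ and $\varphi$ is concave, one has $\widetilde{t/\psi} \asymp \varphi$, so $M_{\widetilde{t/\psi}} \asymp M_\varphi$ as r.i.~BFSs with equivalent norms; taking associates once more gives $(M_\varphi)' \asymp \Lambda_\psi$. Remark~\ref{rem:duality_acInfty_bothRI} therefore translates (i) into $X' \acInfty \Lambda_\psi$, which by Theorem~\ref{thm:characterization_for_ri}(i) reduces to showing $\sup_{\|g\|_{\widebar{X'}}\leq 1}\|g^*\chi_{(a,\infty)}\|_{\widebar{\Lambda_\psi}} \to 0$ as $a \to \infty$.

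Using $(g^*\chi_{(a,\infty)})^*(t) = g^*(a+t)$, this norm unwinds to
\[
\|g^*\chi_{(a,\infty)}\|_{\widebar{\Lambda_\psi}} = g^*(a)\psi(0+) + \int_0^\infty g^*(a+t)\psi'(t)\,dt,
\]
and I would split the integral at an auxiliary $B>0$. On $(0,B)$, monotonicity of $g^*$ combined with \eqref{prel:fund_ineq} and the identity $\fund_X \fund_{X'} = t$ yields a contribution at most $g^*(a)\psi(B) \leq \psi(B)\fund_X(a)/a$, which for each fixed $B$ tends to $0$ as $a \to \infty$ by hypothesis \eqref{E:characterization_Marcz_endpoints:fund_cond}. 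On $(B, \infty)$, the H\"older inequality \eqref{prel:Holder} together with $\|g^*(a+\cdot)\|_{\widebar{X'}}\leq \|g^*\|_{\widebar{X'}}\leq 1$ (from $g^*(a+t)\leq g^*(t)$) gives a contribution at most $\|\psi'\chi_{(B,\infty)}\|_{\widebar X}$, which by (iii) can be made arbitrarily small by choosing $B$ large. A standard $\varepsilon/2$-argument, picking $B$ first and then $a$, closes the proof. The hardest point in this plan is the duality identification $(M_\varphi)' \asymp \Lambda_\psi$; the rest is a careful but routine calculation.
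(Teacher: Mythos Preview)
Your proposal is correct and follows essentially the same route as the paper. The only cosmetic difference is in (iii)~$\Rightarrow$~(i): after the same duality reduction to $X'\acInfty\Lambda_\psi$, the paper invokes Theorem~\ref{thm:characterization_infty_convergence} (bounded sequences with $\|f_n\|_{L^\infty}\to 0$) rather than Theorem~\ref{thm:characterization_for_ri}, but the resulting split of the $\Lambda_\psi$-norm and the H\"older estimate on the tail are identical to yours.
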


\begin{proof}
The fact that (i) implies (ii) follows from Theorem~\ref{thm:characterization_for_ri}. We now prove that (ii) implies (iii). It suffices to show that $\psi'\in M_\varphi(0,\infty)$. Using \eqref{prel:concave_majorant} together with the nonnegativity and concavity of $\psi$, we obtain
\begin{align}
\|\psi'\|_{M_\varphi(0,\infty)} &= \sup_{t\in(0,\infty)} \frac{\varphi(t)}{t}\int_0^t\psi'(s)\dd{s} \leq \sup_{t\in(0,\infty)}\frac{\varphi(t)}{t}\psi(t) \nonumber\\
&\leq \sup_{t\in(0,\infty)}\frac{\varphi(t)}{t}\frac{2t}{\varphi(t)} < \infty. \label{E:characterization_Marcz_endpoints:4}
\end{align}

Finally, we prove that (iii) implies (i). By \eqref{prel:endpoint_assoc} and Remark~\ref{rem:duality_acInfty_bothRI}, the statement~(i) is valid if and only if
\begin{equation}\label{E:characterization_Marcz_endpoints:1}
    X' \acInfty \Lambda_{\psi}\RM.
\end{equation}
Moreover, by \eqref{prel:fund_X_and_asocX_for_ri}, the condition \eqref{E:characterization_Marcz_endpoints:fund_cond} is equivalent to
\begin{equation*}
    \lim_{t\to \infty}\varphi_{X'}(t) = \infty.
\end{equation*}
Thus, in view of Theorem~\ref{thm:characterization_infty_convergence}, to establish \eqref{E:characterization_Marcz_endpoints:1} it is sufficient to show that
\begin{equation}\label{E:characterization_Marcz_endpoints:2}
    \lim_{n\to\infty} \|f_n\|_{\Lambda_{\psi}\RM} = 0
\end{equation}
for every sequence $\{f_n\}_{n = 1}^\infty\subseteq X'$ such that
\begin{equation}\label{E:characterization_Marcz_endpoints:3}
    \|f_n\|_{X'}\leq 1 \quad \text{for every $n\in\N$} \qquad \text{and} \qquad \lim_{n\to\infty} \|f_n\|_{L^\infty\RM} = 0.
\end{equation}
Let $\{f_n\}_{n = 1}^\infty\subseteq X'$ satisfy \eqref{E:characterization_Marcz_endpoints:3}. Fix $a>0$. Then
\begin{align*}
    \int_a^\infty f_n^*(t) \psi'(t) \dd{t} \leq \|f_n\|_{\widebar{X}'} \|\psi'\chi_{(a, \infty)}\|_{\widebar{X}} \leq \|\psi'\chi_{(a, \infty)}\|_{\widebar{X}} \quad \text{for every $n\in\N$}
\end{align*}
by the H\"{o}lder inequality \eqref{prel:Holder} and \eqref{E:characterization_Marcz_endpoints:3}. Therefore,
\begin{align*}
\|f_n\|_{\Lambda_{\psi}\RM} &= \|f_n\|_{L^\infty\RM}\psi(0_+) + \int_0^a f_n^*(t) \psi'(t) \dd{t} + \int_a^\infty f_n^*(t) \psi'(t) \dd{t}\\
&\leq \|f_n\|_{L^\infty\RM} \psi(0_+) + \|f_n\|_{L^\infty\RM} \int_0^a \psi'(t) \dd{t} + \|\psi'\chi_{(a, \infty)}\|_{\widebar{X}}\\
&= \|f_n\|_{L^\infty\RM}\psi(a) + \|\psi'\chi_{(a, \infty)}\|_{\widebar{X}}
\end{align*}
for every $n\in\N$. Using (iii) together with \eqref{E:characterization_Marcz_endpoints:3}, we obtain \eqref{E:characterization_Marcz_endpoints:2}, which concludes the proof.
\end{proof}

\begin{remark}
    The assumption~\eqref{E:characterization_Marcz_endpoints:fund_cond} is natural and not restrictive. Indeed, if it is not satisfied, $Z \acInfty X$ does not hold for any r.i.~BFS $Z$. To see this, note that
    \begin{equation*}
    \frac{\fund_X(t)}{\fund_Z(t)} \geq  \frac1{\fund_Z(1)} \frac{\fund_X(t)}{t} \quad \text{for every $t\geq1$}
    \end{equation*}
    thanks to the fact that the function $t\mapsto t/\fund_Z(t)$ is nondecreasing on $(0, \infty)$. Consequently, if \eqref{E:characterization_Marcz_endpoints:fund_cond} is violated, then $Z \acInfty X$ cannot hold by Proposition~\ref{prop:necessaryCond_fundFuncs_decay} (recall also Remark~\ref{rem:necessaryCond_fundFuncs_decay_ri}).
\end{remark}

\begin{theorem}\label{thm:characterization_Lorentz_endpoints}
Let $\varphi,\psi\colon[0, \infty) \to [0, \infty)$ be nontrivial concave functions. Then
\begin{equation}\label{thm:characterization_Lorentz_endpoints:acInfty}
    \Lambda_\varphi\RM \acInfty \Lambda_\psi\RM
\end{equation}
if and only if
\begin{equation}\label{thm:characterization_Lorentz_endpoints:fund}
    \lim_{t\to\infty}\frac{\psi(t)}{\varphi(t)}=0.
\end{equation}
\end{theorem}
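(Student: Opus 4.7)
My plan is to apply Theorem~\ref{thm:characterization_for_ri} in both directions, using the layer cake identity
\[
\|g\|_{\Lambda_\eta}=\int_0^\infty \eta(\mu_g(\lambda))\dd{\lambda}
\]
valid for any nontrivial concave $\eta\colon[0,\infty)\to[0,\infty)$, where $\mu_g(\lambda)=\mu(\{x\in\RR:|g(x)|>\lambda\})$; this follows by Fubini applied to the definition of $\|\cdot\|_{\Lambda_\eta}$, the $L^\infty$-jump term $\eta(0_+)\|g\|_{L^\infty\RM}$ exactly canceling the boundary contribution $-\eta(0_+)$ coming from $\int_0^{\mu_g(\lambda)}\eta'(t)\dd{t}=\eta(\mu_g(\lambda))-\eta(0_+)$, integrated over $\{0<\lambda<\|g\|_{L^\infty\RM}\}$. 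For necessity, I would invoke Proposition~\ref{prop:necessaryCond_fundFuncs_decay}(i) together with Remark~\ref{rem:necessaryCond_fundFuncs_decay_ri}: the spaces $\Lambda_\varphi\RM$ and $\Lambda_\psi\RM$ are r.i.~BFSs with $\fund_{\Lambda_\varphi}=\varphi$, $\fund_{\Lambda_\psi}=\psi$, and $\varphi(2a)\le 2\varphi(a)$ by concavity, so \eqref{thm:characterization_Lorentz_endpoints:acInfty} yields \eqref{thm:characterization_Lorentz_endpoints:fund} at once.

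For the sufficiency direction, assume \eqref{thm:characterization_Lorentz_endpoints:fund}. Since $\psi$ is nondecreasing and nontrivial, the hypothesis forces $\varphi(t)\to\infty$ (otherwise $\psi/\varphi\to0$ would force $\psi\equiv 0$); by \eqref{prel:fund_ineq} this gives $f^*(t)\to0$ for every $f\in\Lambda_\varphi\RM$, so $\mu_f(\lambda)<\infty$ for each $\lambda>0$. The nonincreasing rearrangement of $f^*\chi_{(a,\infty)}$ is $s\mapsto f^*(s+a)$, whose distribution function at level $\lambda$ equals $\max\{0,\mu_f(\lambda)-a\}$, so the layer cake identity gives
\[
\|f^*\chi_{(a,\infty)}\|_{\widebar{\Lambda_\psi}}=\int_{\{\mu_f>a\}}\psi(\mu_f(\lambda)-a)\dd{\lambda}.
\]
Setting $\epsilon(a):=\sup_{u>0}\psi(u)/\varphi(u+a)$, we have $\psi(\mu_f(\lambda)-a)\le\epsilon(a)\,\varphi(\mu_f(\lambda))$ on $\{\mu_f(\lambda)>a\}$, whence
\[
\|f^*\chi_{(a,\infty)}\|_{\widebar{\Lambda_\psi}}\le\epsilon(a)\int_0^\infty\varphi(\mu_f(\lambda))\dd{\lambda}=\epsilon(a)\|f\|_{\Lambda_\varphi\RM}.
\]
Taking the supremum over $\|f\|_{\Lambda_\varphi\RM}\le 1$ and appealing to Theorem~\ref{thm:characterization_for_ri}(i), the argument reduces to showing $\epsilon(a)\to 0$.

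The main step is thus this last limit. Given $\delta>0$, I would pick $T$ with $\psi(t)/\varphi(t)<\delta$ for all $t>T$ and split the supremum at $u=T$. For $u\ge T$ and $a\ge T$, monotonicity of $\psi$ yields $\psi(u)/\varphi(u+a)\le\psi(u+a)/\varphi(u+a)<\delta$, since $u+a>T$; for $u<T$, one bounds $\psi(u)/\varphi(u+a)\le\psi(T)/\varphi(a)$, which tends to $0$ as $a\to\infty$ thanks to $\varphi(a)\to\infty$. Hence $\epsilon(a)\le\delta$ for all sufficiently large $a$, and since $\delta>0$ was arbitrary, $\epsilon(a)\to 0$, completing the proof.
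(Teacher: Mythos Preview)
Your proof is correct and follows essentially the same approach as the paper: both use Proposition~\ref{prop:necessaryCond_fundFuncs_decay} for necessity and, for sufficiency, bound $\|f^*\chi_{(a,\infty)}\|_{\Lambda_\psi}/\|f\|_{\Lambda_\varphi}$ via the layer-cake structure of the Lorentz norm and appeal to Theorem~\ref{thm:characterization_for_ri}. The paper carries out the computation on simple functions (then passes to the limit by monotone convergence) rather than via the distributional layer-cake identity, and it bounds $\psi(b_j-a)\le\psi(b_j)$ at the outset to obtain the slightly coarser majorant $\sup_{t>a}\psi(t)/\varphi(t)$, which tends to $0$ immediately from the hypothesis without your additional $\varphi(t)\to\infty$ splitting argument.
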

\begin{proof}
In view of Proposition~\ref{prop:necessaryCond_fundFuncs_decay} (recall also Remark~\ref{rem:necessaryCond_fundFuncs_decay_ri}), it suffices to show that \eqref{thm:characterization_Lorentz_endpoints:fund} implies \eqref{thm:characterization_Lorentz_endpoints:acInfty}. Assume that \eqref{thm:characterization_Lorentz_endpoints:fund} holds. Let $s\in\Mpl(0, \infty)$ be a nontrivial simple function. Then its nonincreasing rearrangement can
be written as
\begin{equation*}
    s^* = \sum_{j = 1}^N \gamma_j \chi_{[0, b_j)}
\end{equation*}
for some $N\in\N$, with $\gamma_j>0$ and $0<b_1<b_2<\dots < b_N < \infty$. We have
\begin{align}
    \frac{\|s^*\chi_{(a,\infty)}\|_{\Lambda_\psi(0, \infty)}}{\|s\|_{\Lambda_\varphi(0, \infty)}} &= \frac{\|\sum_{b_j>a}\gamma_j\chi_{[0,b_j-a)}\|_{\Lambda_\psi(0, \infty)}}{\|\sum_{j = 1}^N \gamma_j \chi_{[0, b_j)}\|_{\Lambda_\varphi(0, \infty)}}
= \frac{\sum_{b_j>a}\gamma_j\psi(b_j-a)}{\sum_{j=1}^N \gamma_j\varphi(b_j)} \nonumber\\
&\leq \frac{\sum_{b_j>a}\gamma_j\psi(b_j)}{\sum_{j=1}^N \gamma_j\varphi(b_j)}
\leq \frac{\sum_{b_j>a}\gamma_j\varphi(b_j)}{\sum_{j=1}^N \gamma_j\varphi(b_j)} \sup_{t>a} \frac{\psi(t)}{\varphi(t)}
\leq \sup_{t>a} \frac{\psi(t)}{\varphi(t)}. \label{E:characterization_Lorentz_endpoints:1}
\end{align}
Let $f\in\M(0, \infty)$ and fix $a>0$. There exists a sequence $\{s_n\}_{n = 1}^\infty\subseteq\Mpl(0,\infty)$ of nonnegative simple functions such that $s_n \nearrow |f|$ a.e. In particular, we have $s_n^*\chi_{(a,\infty)} \nearrow f^*\chi_{(a,\infty)}$. Using \eqref{E:characterization_Lorentz_endpoints:1}, we obtain
\begin{equation}\label{E:characterization_Lorentz_endpoints:2}
    \frac{\|f^*\chi_{(a,\infty)}\|_{\Lambda_\psi(0, \infty)}}{\|f\|_{\Lambda_\varphi(0, \infty)}} = \lim_{n\to\infty} \frac{\|s_n^*\chi_{(a,\infty)}\|_{\Lambda_\psi(0, \infty)}}{\|s_n\|_{\Lambda_\varphi(0, \infty)}} \leq \sup_{t>a} \frac{\psi(t)}{\varphi(t)}.
\end{equation}
Hence,
\begin{equation*}
    \lim_{a\to\infty}\sup_{\|f\|_{\Lambda_\varphi(0, \infty)}\leq1}\|f^*\chi_{(a,\infty)}\|_{\Lambda_\psi(0, \infty)} = 0,
\end{equation*}
by \eqref{thm:characterization_Lorentz_endpoints:fund} and \eqref{E:characterization_Lorentz_endpoints:2}. Therefore,
\eqref{thm:characterization_Lorentz_endpoints:acInfty} follows from
Theorem~\ref{thm:characterization_for_ri}.
\end{proof}

\begin{theorem}\label{thm:characterization_Marcz_into_Lorentz}
Let $\varphi,\psi\colon[0, \infty) \to [0, \infty)$ be nontrivial concave functions. Then
\begin{equation}\label{E:characterization_Marcz_into_Lorentz:acInfty}
    M_\varphi\RM \acInfty\Lambda_\psi\RM
\end{equation}
if and only if
\begin{equation}\label{E:characterization_Marcz_into_Lorentz:embInfty_and_fund}
    M_\varphi\RM \weakacInfty \Lambda_\psi\RM \qquad \text{and} \qquad \lim_{t\to\infty}\frac{\psi(t)}{\varphi(t)} = 0.
\end{equation}
\end{theorem}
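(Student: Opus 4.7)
The necessity direction is immediate: $M_\varphi\RM \acInfty \Lambda_\psi\RM$ implies $M_\varphi\RM \weakacInfty \Lambda_\psi\RM$ by Remark~\ref{rem:embInfty}, while Proposition~\ref{prop:necessaryCond_fundFuncs_decay}(i) combined with the doubling inequality $\varphi(2a)\leq 2\varphi(a)$ recorded in Remark~\ref{rem:necessaryCond_fundFuncs_decay_ri} yields $\lim_{t\to\infty}\psi(t)/\varphi(t)=0$. The substance of the proof therefore lies in the converse direction.

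For sufficiency, assume~\eqref{E:characterization_Marcz_into_Lorentz:embInfty_and_fund}. The plan is to reduce to Theorem~\ref{thm:characterization_Marcz_endpoints} with $X=\Lambda_\psi\RM$. Since $\varphi$ is concave and vanishes at $0$, the ratio $\varphi(t)/t$ is nonincreasing and hence bounded on $[1,\infty)$; together with $\psi/\varphi\to 0$ this gives
\begin{equation*}
\frac{\psi(t)}{t} = \frac{\psi(t)}{\varphi(t)}\cdot\frac{\varphi(t)}{t} \longrightarrow 0,
\end{equation*}
which is precisely the hypothesis $\lim_{t\to\infty} t/\psi(t)=\infty$ of Theorem~\ref{thm:characterization_Marcz_endpoints}. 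Letting $\eta$ denote the least concave majorant of $t\mapsto t/\varphi(t)$, the equivalence of (i) and (iii) in that theorem reduces the goal to verifying
\begin{equation*}
\lim_{a\to\infty}\|\eta'\chi_{(a,\infty)}\|_{\widebar{\Lambda_\psi}} = 0.
\end{equation*}

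The remaining step is to feed $g^*=\eta'/2$ into the $\weakacInfty$ hypothesis as a test function. By~\eqref{prel:concave_majorant} we have $\eta\leq 2t/\varphi(t)$, and concavity of $\eta$ with $\eta(0)=0$ gives $\eta'(t)\leq \eta(t)/t\leq 2/\varphi(t)$; in particular $g^{**}(t) = \eta(t)/(2t) \leq 1/\varphi(t)$, so $\|g\|_{\widebar{M_\varphi}}\leq 1$. Invoking the $\weakacInfty$ hypothesis in the equivalent form of Remark~\ref{rem:embInfty} then yields $\|\eta'\chi_{(a_0,\infty)}\|_{\widebar{\Lambda_\psi}}\leq 2K<\infty$ for some $a_0>0$. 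Expanding the norm on the representation space as
\begin{equation*}
\|\eta'\chi_{(a,\infty)}\|_{\widebar{\Lambda_\psi}} = \eta'(a)\psi(0_+) + \int_0^\infty \eta'(a+t)\psi'(t)\dd t,
\end{equation*}
the boundary term decays because $\eta'(a)\leq 2/\varphi(a)\to 0$ (the assumptions $\psi/\varphi\to 0$ and $\psi$ nondecreasing and positive force $\varphi\to\infty$), while for the integral, monotonicity of $\eta'$ supplies the $a$-uniform dominating function $\eta'(a_0+t)\psi'(t)$, integrable by the bound above, and $\eta'(a+t)\leq 2/\varphi(a+t)\to 0$ pointwise. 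Dominated convergence then finishes the argument. The main obstacle is spotting the reduction: once one observes that $\psi/\varphi\to 0$ together with the quasiconcavity of $\varphi$ forces $\psi/t\to 0$, so that Theorem~\ref{thm:characterization_Marcz_endpoints} is applicable, the extremal test function $\eta'/2$ is the natural candidate and the remaining dominated convergence argument is routine.
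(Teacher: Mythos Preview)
Your proof is correct and follows essentially the same route as the paper: reduce to Theorem~\ref{thm:characterization_Marcz_endpoints} after checking $t/\psi(t)\to\infty$, feed the extremal function $\eta'$ into the $\weakacInfty$ hypothesis to obtain an integrable majorant, and finish with dominated convergence. The only cosmetic difference is that the paper splits the integral $\int_0^\infty \eta'(a+t)\psi'(t)\,dt$ at $t=a$ and controls the near-zero piece directly via $\psi(a)/\varphi(a)\to 0$, whereas you run dominated convergence on the whole integral with majorant $\eta'(a_0+t)\psi'(t)$; both work.
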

\begin{proof}
By Remark~\ref{rem:embInfty} and Proposition~\ref{prop:necessaryCond_fundFuncs_decay}, we see that \eqref{E:characterization_Marcz_into_Lorentz:acInfty} implies \eqref{E:characterization_Marcz_into_Lorentz:embInfty_and_fund}. Assume that \eqref{E:characterization_Marcz_into_Lorentz:embInfty_and_fund} holds. Note that, using  \eqref{E:characterization_Marcz_into_Lorentz:embInfty_and_fund}, we have
\begin{equation*}
    \lim_{t\to\infty} \frac{t}{\psi(t)}  = \lim_{t\to\infty} \frac{\varphi(t)}{\psi(t)} \frac{t}{\varphi(t)} = \infty.
\end{equation*}
Let $\eta$ be the least concave majorant of $t\mapsto t/\varphi(t)$. By Theorem~\ref{thm:characterization_Marcz_endpoints}, \eqref{E:characterization_Marcz_into_Lorentz:acInfty} will follow once we show that
\begin{equation}\label{E:characterization_Marcz_into_Lorentz:5}
    \lim_{a\to\infty}\|\eta'\chi_{(a, \infty)}\|_{\Lambda_\psi(0, \infty)} = 0.
\end{equation}
To this end, the monotonicity and concavity of $\eta$ imply that
\begin{equation*}
    0\leq\eta'(t)=\lim_{s\to t}\frac{\eta(t)-\eta(s)}{t-s}\leq\frac{\eta(t)}{t} \quad \text{for a.e.~$t\in(0, \infty)$}.
\end{equation*}
Hence,
\begin{equation}\label{E:characterization_Marcz_into_Lorentz:3}
    (\eta')^*(t)\leq \frac{\eta(t)}{t} \quad \text{for every $t\in(0, \infty)$}.
\end{equation}
Using $\eta'\in M_\varphi(0,\infty) \weakacInfty \Lambda_\psi(0,\infty)$ (see~\eqref{E:characterization_Marcz_endpoints:4}) together with Remark~\ref{rem:embInfty}, we obtain
\begin{equation*}
\int_0^\infty \left(\eta'\chi_{(1,\infty)}\right)^*(t)\psi'(t) \dd{t} = \int_0^\infty \eta'(t+1)\psi'(t) \dd{t} < \infty.
\end{equation*}
Therefore,
\begin{equation}\label{E:characterization_Marcz_into_Lorentz:4}
    \lim_{a\to\infty} \int_a^\infty \eta'(t+1)\psi'(t) \dd{t} = 0 
\end{equation}
by the dominated convergence theorem. Now, using the monotonicity of $\eta'$, \eqref{E:characterization_Marcz_into_Lorentz:3}, and \eqref{prel:concave_majorant}, for every $a\geq1$, we obtain
\begin{align*}
\|\eta'\chi_{(a,\infty)}\|_{\Lambda_\psi(0, \infty)} &= \left(\eta'\right)^*(a) \psi(0_+) + \int_0^a \eta'(t+a)\psi'(t) \dd{t} + \int_a^\infty \eta'(t+a) \psi'(t) \dd{t}\\
&\leq  \left(\eta'\right)^*(a) \psi(0_+) + \left(\eta'\right)^*(a) \int_0^a\psi'(t) \dd{t} + \int_a^\infty \eta'(t+1)\psi'(t) \dd{t}\\
&= \left(\eta'\right)^*(a)\psi(a) + \int_a^\infty \eta'(t+1)\psi'(t) \dd{t}\\
&\leq \frac{\eta(a)}{a}\psi(a) + \int_a^\infty \eta'(t+1)\psi'(t) \dd{t} \\
&\leq 2\frac{\psi(a)}{\varphi(a)} + \int_a^\infty \eta'(t+1)\psi'(t) \dd{t}.
\end{align*}
Hence, combining this with \eqref{E:characterization_Marcz_into_Lorentz:embInfty_and_fund} and \eqref{E:characterization_Marcz_into_Lorentz:4}, we see that \eqref{E:characterization_Marcz_into_Lorentz:5} holds, from which \eqref{E:characterization_Marcz_into_Lorentz:acInfty} follows.
\end{proof}

\begin{corollary}
Let $X$ and $Y$ be r.i.~BFSs of type $\Lambda$ or $M$ (not necessarily of the same type). Assume that their fundamental functions are concave. Then
\begin{equation}\label{E:emb_endpoint_spaces:acInfty}
    X \acInfty Y
\end{equation}
if and only if
\begin{equation}\label{E:emb_endpoint_spaces:embInfty_and_fund}
    X \weakacInfty Y \qquad \text{and} \qquad \lim_{t\to\infty}\frac{\fund_Y(t)}{\fund_X(t)} = 0.
\end{equation}
\end{corollary}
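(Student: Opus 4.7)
The plan is to split into four cases according to whether each of $X$ and $Y$ is of Marcinkiewicz or Lorentz endpoint type, and handle each case by invoking a previously established result. The necessity direction is immediate: if $X\acInfty Y$, then $X\weakacInfty Y$ by Remark~\ref{rem:embInfty}, and $\lim_{t\to\infty}\fund_Y(t)/\fund_X(t)=0$ follows from Proposition~\ref{prop:necessaryCond_fundFuncs_decay}(i) combined with Remark~\ref{rem:necessaryCond_fundFuncs_decay_ri}.

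For sufficiency, let $\varphi=\fund_X$ and $\psi=\fund_Y$. The case $X=\Lambda_\varphi,\,Y=\Lambda_\psi$ is exactly Theorem~\ref{thm:characterization_Lorentz_endpoints}, and the case $X=M_\varphi,\,Y=\Lambda_\psi$ is exactly Theorem~\ref{thm:characterization_Marcz_into_Lorentz}. For the mixed case $X=\Lambda_\varphi,\,Y=M_\psi$, I would first apply Theorem~\ref{thm:characterization_Lorentz_endpoints} to obtain $\Lambda_\varphi\acInfty\Lambda_\psi$ from the hypothesis $\lim\psi/\varphi=0$, and then transfer this to $\Lambda_\varphi\acInfty M_\psi$ via the endpoint embedding $\Lambda_\psi\hookrightarrow M_\psi$ (a consequence of \eqref{prel:endpoint_embedding_M} applied to $\Lambda_\psi$, whose fundamental function is $\psi$).

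The substantive case is $X=M_\varphi,\,Y=M_\psi$, which I would treat through Theorem~\ref{thm:characterization_Marcz_endpoints} with $M_\psi$ playing the role of the r.i.~BFS there. To verify its hypothesis, I need $\lim_{t\to\infty}t/\psi(t)=\infty$, i.e., $\psi(t)=o(t)$; this follows from the factorization $\psi(t)/t=(\psi(t)/\varphi(t))\cdot(\varphi(t)/t)$ together with the convergence of $\varphi(t)/t$ to some $c\in[0,\infty)$ (a consequence of the concavity of $\varphi$ with $\varphi(0)=0$). The theorem then reduces the matter to showing
\begin{equation*}
	\lim_{a\to\infty}\sup_{t>0}\frac{\psi(t)\bigl(\eta(t+a)-\eta(a)\bigr)}{t}=0,
\end{equation*}
where $\eta$ denotes the least concave majorant of $t\mapsto t/\varphi(t)$.

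The hard part is this final estimate, which I would handle by splitting at $t=a$. For $t\leq a$, concavity of $\eta$ (with $\eta(0)=0$) yields $(\eta(t+a)-\eta(a))/t\leq \eta'(a^+)\leq\eta(a)/a\leq 2/\varphi(a)$ via~\eqref{prel:concave_majorant}, and multiplying by $\psi(t)\leq\psi(a)$ gives the bound $2\psi(a)/\varphi(a)$. For $t>a$, the coarser estimate $(\eta(t+a)-\eta(a))/t\leq\eta(t+a)/t\leq 2(t+a)/(t\varphi(t+a))\leq 4/\varphi(t)$ (using $t+a\leq 2t$ and monotonicity of $\varphi$) yields $4\psi(t)/\varphi(t)$. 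Both ranges are thus dominated by $4\sup_{s\geq a}\psi(s)/\varphi(s)$, which tends to $0$ by assumption. Incidentally, this analysis reveals that the hypothesis $X\weakacInfty Y$ is redundant in three of the four cases; it genuinely enters the argument only in the mixed case covered by Theorem~\ref{thm:characterization_Marcz_into_Lorentz}.
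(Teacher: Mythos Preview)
Your argument is correct. The necessity direction and three of the four sufficiency cases ($\Lambda,\Lambda$; $\Lambda,M$; $M,\Lambda$) are handled exactly as in the paper. The difference lies in the $M_\varphi,M_\psi$ case: the paper dispatches it by duality, observing via Remark~\ref{rem:duality_acInfty_bothRI} and \eqref{prel:endpoint_assoc} that $M_\varphi\acInfty M_\psi$ is equivalent to $\Lambda_{\tilde\alpha}\acInfty\Lambda_{\tilde\beta}$ for suitable concave majorants, and then applying Theorem~\ref{thm:characterization_Lorentz_endpoints} once more. Your route through Theorem~\ref{thm:characterization_Marcz_endpoints} with the explicit computation of $\|\eta'\chi_{(a,\infty)}\|_{\widebar{M_\psi}}$ and the split at $t=a$ is longer but entirely self-contained, avoiding any appeal to duality. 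Both approaches reveal that the hypothesis $X\weakacInfty Y$ is superfluous in this case (and indeed in all but the $M,\Lambda$ case), a point you make explicit and the paper leaves implicit.
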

\begin{proof}
The fact that \eqref{E:emb_endpoint_spaces:acInfty} implies \eqref{E:emb_endpoint_spaces:embInfty_and_fund} follows from Proposition~\ref{prop:necessaryCond_fundFuncs_decay} and Remark~\ref{rem:embInfty}. For the converse implication, we consider four cases. First, assume that both $X$ and $Y$ are of type $\Lambda$. Then the converse implication follows from Theorem~\ref{thm:characterization_Lorentz_endpoints}. Second, assume that $X=\Lambda_{\fund_X}$ and $Y = M_{\fund_Y}$. This case also follows from Theorem~\ref{thm:characterization_Lorentz_endpoints} combined with the embedding $\Lambda_{\fund_Y}\RM\hookrightarrow M_{\fund_Y}\RM = Y$. Third, assume that $X=M_{\fund_X}$ and $Y = \Lambda_{\fund_Y}$. Then the converse implication follows from Theorem~\ref{thm:characterization_Marcz_into_Lorentz}. Finally, assume that $X$ and $Y$ are of type $M$. By Remark~\ref{rem:duality_acInfty_bothRI} and \eqref{prel:endpoint_assoc}, \eqref{E:emb_endpoint_spaces:acInfty} is equivalent to
\begin{equation}\label{E:emb_endpoint_spaces:1}
    \Lambda_{\tilde{\varphi}}\RM \acInfty \Lambda_{\tilde{\psi}}\RM,
\end{equation}
where $\tilde{\varphi}$ and $\tilde{\psi}$ are the nondecreasing concave majorants of the functions $t\mapsto t/\fund_Y(t)$ and $t\mapsto t/\fund_X(t)$, respectively. Since
\begin{equation*}
    \limsup_{t\to\infty} \frac{\tilde{\psi}(t)}{\tilde{\varphi}(t)} \leq 2\limsup_{t\to\infty} \frac{\fund_Y(t)}{\fund_X(t)}
\end{equation*}
by \eqref{prel:concave_majorant}, \eqref{E:emb_endpoint_spaces:embInfty_and_fund} implies \eqref{E:emb_endpoint_spaces:1} by Theorem~\ref{thm:characterization_Lorentz_endpoints}. Consequently, \eqref{E:emb_endpoint_spaces:acInfty} holds.
\end{proof}

When $\varphi$ is the identity function on $[0, \infty)$, both $\Lambda_\varphi$ and $M_\varphi$ coincide with $L^1$. We conclude this section with this special, important case.
\begin{proposition}
Let $X$ be an r.i.~BFS. Then the following three statements are equivalent.
\begin{enumerate}[(i)]
	\item $L^1\RM \acInfty X$;
  \item $X \weakacInfty L^1\RM$ does not hold;
  \item $\lim_{t\to\infty}\frac{t}{\fund_X(t)} = \infty$.
\end{enumerate}
\end{proposition}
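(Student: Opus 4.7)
My plan is to prove $(i)\Leftrightarrow(iii)$ and $(ii)\Leftrightarrow(iii)$, exploiting the special fact that $L^1 = M_\varphi$ for the linear function $\varphi(t)=t$, so that $\fund_{L^1}(t)=t$.

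For $(i)\Rightarrow(iii)$, I would invoke Proposition~\ref{prop:necessaryCond_fundFuncs_decay}(i) together with Remark~\ref{rem:necessaryCond_fundFuncs_decay_ri}, applied with source space $L^1$: the resulting conclusion $\fund_X(a)/\fund_{L^1}(a)\to 0$ as $a\to\infty$ is simply $\fund_X(a)/a\to 0$, which is $(iii)$ rewritten. For $(iii)\Rightarrow(i)$, I would apply Theorem~\ref{thm:characterization_Marcz_endpoints} with $\varphi(t)=t$: its standing hypothesis \eqref{E:characterization_Marcz_endpoints:fund_cond} is precisely $(iii)$, and the function $\psi$ appearing in its condition~(iii), being the least concave majorant of $t\mapsto t/\varphi(t)\equiv 1$, is the constant~$1$. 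Hence $\psi'\equiv 0$ almost everywhere, so $\|\psi'\chi_{(a,\infty)}\|_{\widebar{X}}=0$ for every $a>0$, and the theorem delivers $L^1 = M_\varphi \acInfty X$.

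For $(iii)\Rightarrow(ii)$, I would argue by contradiction: assume $X \weakacInfty L^1$. By the rearrangement-invariant form of the characterization in Remark~\ref{rem:embInfty}, there exist $a_0>0$ and $M<\infty$ such that $\int_{a_0}^\infty h^*(s)\dd{s}\leq M$ whenever $\|h\|_{\widebar{X}}\leq 1$. Testing this inequality with $h = \chi_{(0,t)}/\fund_X(t)$, which has unit norm, yields $(t-a_0)/\fund_X(t)\leq M$ for $t>a_0$, contradicting~(iii). For $(ii)\Rightarrow(iii)$, I would argue by contrapositive: if $(iii)$ fails, the quasiconcavity of $\fund_X$ makes $t\mapsto t/\fund_X(t)$ nondecreasing and therefore bounded, say $\fund_X(t)\geq t/C$ for all $t>0$. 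The identity \eqref{prel:fund_X_and_asocX_for_ri} then gives $\fund_{X'}(t)\leq C$ uniformly in $t$, so the H\"older inequality \eqref{prel:Holder} yields $\int_E |f|\dd{\mu}\leq \|f\|_X\|\chi_E\|_{X'}\leq C$ whenever $\|f\|_X\leq 1$ and $\mu(E)<\infty$. Passing to the supremum over such $E$ gives $\|f\|_{L^1}\leq C$; thus $X\hookrightarrow L^1$, which trivially implies $X \weakacInfty L^1$, i.e., $(ii)$ fails.

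The only step that requires genuine care is the degenerate specialization of Theorem~\ref{thm:characterization_Marcz_endpoints} at $\varphi(t)=t$: one must verify that the least concave majorant of the constant function $1$ on $(0,\infty)$ is $1$ itself, so that its derivative vanishes almost everywhere. Once this is granted, the equivalence $(i)\Leftrightarrow(iii)$ collapses to a one-line verification, and $(ii)\Leftrightarrow(iii)$ reduces to a single H\"older-type estimate paired with an appropriate choice of test function.
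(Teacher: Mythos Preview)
Your proof is correct, but it takes a genuinely different route from the paper's. The paper argues entirely by duality: using \eqref{E:X_emb_infty_Y_dual}, \eqref{prel:X_is_double_assoc}, and Theorem~\ref{thm:char_L_inf_alt}, it shows that (ii) is equivalent to $X'\acInfty L^\infty$, which by Remark~\ref{rem:duality_acInfty_bothRI} is equivalent to (i), and by Theorem~\ref{thm:char_L_inf_alt} again is equivalent to $\lim_{t\to\infty}\fund_{X'}(t)=\infty$, which via \eqref{prel:fund_X_and_asocX_for_ri} is (iii). In other words, the paper treats this proposition as the exact dual of Theorem~\ref{thm:char_L_inf_alt}.

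You instead work directly on the $L^1$ side. For $(i)\Leftrightarrow(iii)$ you specialize Theorem~\ref{thm:characterization_Marcz_endpoints} at $\varphi(t)=t$, observing that $\psi\equiv 1$ so that $\psi'\equiv 0$ makes condition~(iii) there vacuous; and for $(ii)\Leftrightarrow(iii)$ you give hands-on arguments via a test function $\chi_{(0,t)}/\fund_X(t)$ and the H\"older inequality combined with \eqref{prel:fund_X_and_asocX_for_ri}. Your approach avoids invoking the duality statement \eqref{E:X_emb_infty_Y_dual} for $\weakacInfty$ (which in the paper is only sketched in Remark~\ref{rem:embInfty}) and is more self-contained; the paper's approach, by contrast, makes transparent that the $L^1$ result is precisely the associate-space mirror of the $L^\infty$ case, which is structurally informative. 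Both are short and valid; the degenerate case $\psi\equiv 1$ you flag is indeed harmless, since the constant $1$ is concave and therefore its own least concave majorant.
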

\begin{proof}
By \eqref{E:X_emb_infty_Y_dual}, \eqref{prel:X_is_double_assoc}, and Theorem~\ref{thm:char_L_inf_alt}, the statement (ii) is equivalent to
\begin{equation*}
X' \acInfty L^\infty\RM
\end{equation*}
and to
\begin{equation*}
\lim_{t\to\infty}\varphi_{X'}(t) = \infty.
\end{equation*}
These, in turn, are equivalent to statements (i) and (iii) by Remark~\ref{rem:duality_acInfty_bothRI} and \eqref{prel:fund_X_and_asocX_for_ri}, respectively.
\end{proof}

\section{Examples}\label{sec:examples}
\subsection{Lebesgue and Lorentz spaces}
In this subsection, we characterize the validity of
\begin{equation}\label{E:Lorentz_ac_infty}
    L^{p_1,q_1}\RM \acInfty L^{p_2,q_2}\RM
\end{equation}
between two Lorentz spaces, which also covers the case of Lebesgue spaces (see~\eqref{E:Lpp_is_Lp} below).

The rearrangement invariance of the usual Lebesgue (quasi-)norm $\|\cdot\|_{L^p(\RR, \mu)}$, $p\in(0, \infty]$, follows from the layer cake formula (see, e.g.,~\cite[Chapter~2, Proposition~1.8]{BS} or \cite[Theorem~1.13]{LL:01}). More precisely, we have
\begin{equation}\label{E:Leb_norm_rearrangement}
\|f\|_{L^p(\RR,\mu)} = \|f^*\|_{L^p(0, \infty)} \quad \text{for every $f\in\M(\RR, \mu)$}.
\end{equation}
The fundamental function of the Lebesgue space $L^p$ satisfies $\fund_{L^p}(t) = t^{1/p}$ for every $t\in(0, \infty)$. Moreover, $(L^p)'\RM = L^{p'}\RM$ for $p\in[1, \infty]$ and $(L^p)'\RM = \{0\}$ for $p\in(0,1)$.

\emph{Lorentz spaces} $L^{p, q}(\RR, \mu)$ are an important generalization of Lebesgue spaces. For $p,q\in(0, \infty]$, we define the functional
$\|\cdot\|_{L^{p,q}(\RR, \mu)}$ as
\begin{equation*}
\|f\|_{L^{p,q}(\RR,\mu)} = \|t^{\frac1{p} - \frac1{q}}f^*(t)\|_{L^q(0, \infty)},\ f\in\Mpl(\RR,\mu).
\end{equation*}
When $p=\infty$ and $q\in(0, \infty)$, we have $\|f\|_{L^{p,q}(\RR, \mu)} = \infty$ for all $f\in\MRM\setminus\{0\}$. In all remaining cases (that is, $p<\infty$ or $p=q=\infty$), the functional $\|\cdot\|_{L^{p,q}(\RR, \mu)}$ defines (at least) a rearrangement-invariant quasi-Banach function norm. The fundamental function of the resulting Lorentz space $L^{p,q}(\RR,\mu)$ satisfies
\begin{equation}\label{E:Lorentz_fund}
    \varphi_{L^{p,q}}(t) = c_{p,q}t^{\frac1{p}} \quad \text{for every $t\in[0, \infty)$},
\end{equation}
where $c_{p,q}=(q/p)^{1/q}$, which is to be interpreted as $1$ when $q=\infty$. Note that
\begin{equation}\label{E:Lpp_is_Lp}
    L^{p,p}\RM = L^p\RM \quad \text{with equal norms},
\end{equation}
which follows directly from \eqref{E:Leb_norm_rearrangement}. Furthermore, we have
\begin{equation}\label{E:Lorentz_sec_par_inc}
L^{p, q_1}(\RR,\mu) \hookrightarrow L^{p, q_2}(\RR,\mu) \quad \text{whenever $q_1\leq q_2$}.
\end{equation}
When $1\leq q\leq p < \infty$ or $p=q=\infty$, the functional $\|\cdot\|_{L^{p,q}(\RR, \mu)}$ is a rearrangement-invariant Banach function norm. If $1< p < q \leq\infty$, then $\|\cdot\|_{L^{p,q}(\RR,\mu)}$ itself is not a rearrangement-invariant Banach function norm\textemdash it fails to be subadditive\textemdash but it is equivalent to one (see~\cite[Chapter~4, Section~4]{BS} or \cite{H:66}). More precisely, for $p\in(1, \infty)$ and $q\in[1 ,\infty]$, the functional
\begin{equation*}
\|f\|_{L^{(p,q)}(\RR,\mu)} = \|f^{**}\|_{L^{p,q}(0, \infty)},\ f\in\Mpl(\RR,\mu),
\end{equation*}
defines a rearrangement-invariant Banach function norm, and there exist positive constants $C_1$ and $C_2$ such that
\begin{equation*}
C_1\|f\|_{L^{(p,q)}(\RR,\mu)} \leq \|f\|_{L^{p,q}(\RR,\mu)} \leq C_2\|f\|_{L^{(p,q)}(\RR,\mu)} \quad \text{for every $f\in\Mpl(\RR,\mu)$}.
\end{equation*}
In view of this equivalence, we may (and do) regard the functional $\|\cdot\|_{L^{p,q}(\RR,\mu)}$ as a rearrangement-invariant Banach function norm if (in fact, only if)
\begin{equation}\label{E:Lorentz_ri_space}
    p\in(1, \infty) \quad\text{and}\quad q\in[1, \infty] \qquad\text{or}\qquad p=q=1 \qquad\text{or}\qquad p=q=\infty.
\end{equation}
When \eqref{E:Lorentz_ri_space} holds, the associate space satisfies $(L^{p,q})'\RM = L^{p',q'}\RM$ with equivalent norms. When $q\in(0,1)$ and $p\in[1, \infty)$, the Lorentz space $L^{p,q}(\RR,\mu)$ is a non-normable r.i.~q-BFS that satisfies property (P5), and its associate space is $(L^{p,q})'\RM = L^{p',\infty}\RM$, up to equivalence of norms. Finally, if either $p\in(0,1)$ and $q\in(0, \infty]$, or $p=1$ and $q\in(1, \infty]$, then $L^{p,q}(\RR,\mu)$ is a non-normable r.i.~q-BFS that does not satisfy property (P5), and in this case $(L^{p,q})'\RM = \{0\}$.

\begin{proposition}
    Let $p_1,p_2,q_1,q_2\in(0,\infty]$ be such that either $p_j < \infty$ or $p_j=q_j=\infty$ for $j=1,2$. Then \eqref{E:Lorentz_ac_infty} holds if and only if $p_1 < p_2$.
\end{proposition}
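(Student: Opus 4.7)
The plan is to reduce both directions to elementary computations using the explicit form of the fundamental functions, $\fund_{L^{p_j,q_j}}(t)=c_{p_j,q_j}\,t^{1/p_j}$ from \eqref{E:Lorentz_fund} (with the convention $1/\infty=0$). Necessity will be handled by Proposition~\ref{prop:necessaryCond_fundFuncs_decay}(i) together with Remark~\ref{rem:necessaryCond_fundFuncs_decay_ri}, which in the rearrangement-invariant setting allows $\fund_X(2a)$ to be replaced by $\fund_X(a)$. Sufficiency will be handled by Corollary~\ref{cor:suff_cond_for_qri}, which reduces the problem to estimating the Lorentz (quasi-)norm of the single explicit function $t\mapsto t^{-1/p_1}\chi_{(a,\infty)}(t)$.

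For the \emph{only if} direction, suppose $L^{p_1,q_1}\RM\acInfty L^{p_2,q_2}\RM$. The necessary condition reads
\[
\frac{c_{p_2,q_2}}{c_{p_1,q_1}}\,a^{1/p_2-1/p_1}=\frac{\fund_{L^{p_2,q_2}}(a)}{\fund_{L^{p_1,q_1}}(a)}\longrightarrow 0\qquad\text{as $a\to\infty$},
\]
which forces $1/p_2<1/p_1$, i.e., $p_1<p_2$. (In the boundary case $p_1=q_1=\infty$ the function $\fund_{L^{p_1,q_1}}$ is constant, so the ratio cannot vanish for any admissible $p_2$; this is consistent with $p_1<p_2$ being impossible.)

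For the \emph{if} direction, assume $p_1<p_2$, so in particular $p_1<\infty$. By Corollary~\ref{cor:suff_cond_for_qri} it suffices to verify
\[
\lim_{a\to\infty}\bigl\|t^{-1/p_1}\chi_{(a,\infty)}\bigr\|_{L^{p_2,q_2}(0,\infty)}=0.
\]
The nonincreasing rearrangement of $t\mapsto t^{-1/p_1}\chi_{(a,\infty)}(t)$ is $s\mapsto (s+a)^{-1/p_1}$, so the quantity above equals $\|s^{1/p_2-1/q_2}(s+a)^{-1/p_1}\|_{L^{q_2}(0,\infty)}$ (understood as a supremum when $q_2=\infty$, and as $a^{-1/p_1}$ when $p_2=q_2=\infty$). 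The scaling $s=au$ then gives
\[
\bigl\|t^{-1/p_1}\chi_{(a,\infty)}\bigr\|_{L^{p_2,q_2}(0,\infty)}=C_{p_1,p_2,q_2}\,a^{1/p_2-1/p_1},
\]
where $C_{p_1,p_2,q_2}$ is the $L^{q_2}(0,\infty)$-(quasi-)norm of $u\mapsto u^{1/p_2-1/q_2}(u+1)^{-1/p_1}$. This constant is finite precisely because $p_1<p_2$: integrability near $0$ holds automatically (the exponent $q_2/p_2-1$ exceeds $-1$), while the decay at infinity like $u^{1/p_2-1/p_1}$ secures integrability (or boundedness, if $q_2=\infty$) exactly when $1/p_1>1/p_2$. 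Since the exponent $1/p_2-1/p_1$ is strictly negative, the right-hand side tends to $0$, completing the proof. No genuine obstacle arises; the main effort is identifying the rearrangement and the bookkeeping of the boundary cases $p_2=\infty$ or $q_2=\infty$, all of which are covered by the same scaling.
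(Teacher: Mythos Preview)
Your proof is correct. For necessity you use the same argument as the paper (Proposition~\ref{prop:necessaryCond_fundFuncs_decay}(i) with the fundamental function \eqref{E:Lorentz_fund}). For sufficiency you take a slightly different route: rather than applying Theorem~\ref{thm:characterization_for_ri} directly and estimating $\|f^*\chi_{(a,\infty)}\|_{L^{p_2,r}}$ uniformly over the unit ball of $L^{p_1,\infty}$ (with the reduction $r=\min\{q_2,1\}$ via \eqref{E:Lorentz_sec_par_inc}), you invoke Corollary~\ref{cor:suff_cond_for_qri}, which collapses the problem to computing the $L^{p_2,q_2}$-norm of the single function $t\mapsto t^{-1/p_1}\chi_{(a,\infty)}$. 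Your rearrangement and scaling computations are accurate, and the finiteness of the constant $C_{p_1,p_2,q_2}$ is justified precisely by $p_1<p_2$. The paper in fact anticipates your approach in the Remark immediately following its proof, noting that Corollary~\ref{cor:suff_cond_for_qri} provides an alternative route with ``essentially the same computations.'' Your version has the cosmetic advantage of handling all values of $q_2$ uniformly through the scaling $s=au$, whereas the paper's argument treats the case $p_2=q_2=\infty$ separately; conversely, the paper's choice to pass through $L^{p_1,\infty}$ and $L^{p_2,r}$ makes the pointwise bound \eqref{prel:fund_ineq} more transparent at each step.
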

\begin{proof}
    Using Proposition~\ref{prop:necessaryCond_fundFuncs_decay} and \eqref{E:Lorentz_fund}, we see that if \eqref{E:Lorentz_ac_infty} holds, then necessarily $p_1<p_2$. Therefore, it suffices to prove that \eqref{E:Lorentz_ac_infty} holds when $p_1<p_2$. We consider two cases. First, assume $p_1<p_2<\infty$. In view of \eqref{E:Lorentz_sec_par_inc}, it is sufficient to show that
    \begin{equation}\label{E:Lorentz_acInfty_char:1}
        L^{p_1,\infty}\RM \acInfty L^{p_2,r}\RM,
    \end{equation}
    where $r = \min\{q_2, 1\}$. To this end, note that
    \begin{align}
        \|f^*\chi_{(a, \infty)}\|_{L^{p_2,r}\RM}^r &\leq \|f^*(a)\chi_{(0,a)} + f^*\chi_{(a, \infty)}\|_{L^{p_2,r}\RM}^r \nonumber\\
        &= \int_0^a t^{\frac{r}{p_2} - 1} f^*(a)^r\dd{t} + \int_a^\infty t^{\frac{r}{p_2} - 1} f^*(t)^r \dd{t} \label{E:Lorentz_acInfty_char:2}
    \end{align}
    for all $f\in\M\RM$ and $a>0$. Using \eqref{prel:fund_ineq} and \eqref{E:Lorentz_fund}, we have
    \begin{align*}
        \int_0^a t^{\frac{r}{p_2} - 1} f^*(a)^r\dd{t} &\leq c_{p_1,\infty}^r a^{-\frac{r}{p_1}}\|f\|_{L^{p_1, \infty}\RM}^r \int_0^a t^{\frac{r}{p_2} - 1} \dd{t} \\
        &= c_{p_1,\infty}^r\frac{p_2}{r} a^{\frac{r}{p_2} - \frac{r}{p_1}} \|f\|_{L^{p_1, \infty}\RM}^r
    \end{align*}
    for all $f\in\M\RM$ and $a>0$. It follows that
    \begin{equation}\label{E:Lorentz_acInfty_char:3}
        \lim_{a\to\infty} \sup_{\|f\|_{L^{p_1, \infty}\RM}\leq1} \int_0^a t^{\frac{r}{p_2} - 1} f^*(a)^r\dd{t} = 0.
    \end{equation}
    Furthermore,
    \begin{align*}
        \int_a^\infty t^{\frac{r}{p_2} - 1} f^*(t)^r \dd{t} &\leq \Big( \sup_{t\in(0, \infty)} t^{\frac1{p_1}} f^*(t) \Big)^r \int_a^\infty t^{\frac{r}{p_2} - \frac{r}{p_1} - 1} \dd{t} \\
        &= \frac{p_1 p_2}{r(p_2 - p_1)} a^{r\frac{p_1 - p_2}{p_1 p_2}} \|f\|_{L^{p_1, \infty}\RM}^r
    \end{align*}
    for all $f\in\M\RM$ and $a>0$. Consequently, we have
    \begin{equation}\label{E:Lorentz_acInfty_char:4}
        \lim_{a\to\infty} \sup_{\|f\|_{L^{p_1, \infty}\RM}\leq1} \int_a^\infty t^{\frac{r}{p_2} - 1} f^*(t)^r \dd{t} = 0.
    \end{equation}
    Hence, combining \eqref{E:Lorentz_acInfty_char:3} and \eqref{E:Lorentz_acInfty_char:4} with \eqref{E:Lorentz_acInfty_char:2}, we obtain
    \begin{equation*}
        \lim_{a\to\infty} \sup_{\|f\|_{L^{p_1, \infty}\RM}\leq1} \|f^*\chi_{(a, \infty)}\|_{L^{p_2,r}\RM} = 0,
    \end{equation*}
    from which \eqref{E:Lorentz_acInfty_char:1} follows using Theorem~\ref{thm:characterization_for_ri}.

    Finally, when $p_1<\infty$ and $p_2=q_2 = \infty$, we have
    \begin{align*}
        \lim_{a\to\infty} \sup_{\|f\|_{L^{p_1, q_1}\RM}\leq1} \|f^*\chi_{(a, \infty)}\|_{L^\infty\RM} &= \lim_{a\to\infty} \sup_{\|f\|_{L^{p_1, q_1}\RM}\leq1} f^*(a) \\
        &\leq c_{p_1,q_1}\lim_{a\to\infty} a^{-\frac1{p_1}} = 0
    \end{align*}
    by \eqref{prel:fund_ineq} and \eqref{E:Lorentz_fund}.
\end{proof}

\begin{remark}
    The validity of \eqref{E:Lorentz_ac_infty} when $p_1<p_2$ can also be derived from Corollary~\ref{cor:suff_cond_for_qri}. This alternative approach requires essentially the same computations. Furthermore, similar computations show that $L^{p_1,q_1}\RM \acLoc L^{p_2,q_2}\RM$ if and only if $p_1 > p_2$\textemdash exactly as expected in view of the known fact that $L^{p_1,q_1}\RM \ac L^{p_2,q_2}\RM$ if and only if $p_1 > p_2$ provided that $\mu(\RR) < \infty$.
\end{remark}

\subsection{Orlicz spaces}
In this subsection, we completely characterize the validity of
\begin{equation}\label{E:Orlicz_ac_infty}
    L^B\RM \acInfty L^A\RM,
\end{equation}
where $L^A\RM$ and $L^B\RM$ are Orlicz spaces (see, e.g., \cite{PKJFbook,RR:91}).

A function \( A\colon [0, \infty] \to [0, \infty] \) is called a \emph{Young function} if it is left-continuous and convex on \( [0, \infty) \), satisfies \( A(0) = 0 \), and is not identically constant on \( (0, \infty) \). Given a Young function $A$, the functional $\|\cdot\|_{L^A\RM}$ defined by
\begin{equation*}
\|f\|_{L^A\RM} = \inf \left\{ \lambda > 0: \int_\RR A\left( \frac{f(x)}{\lambda} \right) \dd{\mu}(x) \leq 1 \right\},\ f\in\Mpl\RM,
\end{equation*}
is a rearrangement-invariant Banach function norm. Moreover,
\begin{equation}\label{E:Orlicz_via_rearrangement}
    \int_\RR A(|f(x)|) \dd{\mu}(x) = \int_0^\infty A(f^*(t)) \dd{t} \quad \text{for every $f\in\M\RM$}.
\end{equation}
The resulting r.i.~BFS is called an \emph{Orlicz space}. For example, if $p\in[1, \infty)$, then $\|\cdot\|_{L^p(\RR,\mu)} = \|\cdot\|_{L^A(\RR,\mu)}$ for $A(t) = t^p$, $t\geq0$. Furthermore, $\|\cdot\|_{L^\infty(\RR,\mu)} = \|\cdot\|_{L^A(\RR,\mu)}$ for the Young function  $A(t) = \infty\cdot\chi_{(1, \infty]}(t)$, $t\geq0$.

For a Young function $A$, we have
\begin{equation}\label{E:Orlicz_fund}
\varphi_{L^A}(t)=\frac{1}{A^{-1}(\frac{1}{t})} \quad \text{for every $t\in[0, \infty)$},
\end{equation}
where
\begin{equation*}
A^{-1}(t) = \sup\{s\geq0: A(s)\leq t\},\ t\in[0, \infty],
\end{equation*}
is the generalized right-continuous inverse of $A$. The function $A^{-1}$ is nondecreasing, continuous on $[0,\infty)$, and satisfies $A^{-1}(0)=t_0$ and $A^{-1}(\infty)=\infty$, where
\begin{equation*}
    t_0 = \sup\{t\geq0:A(t)=0\} \in [0, \infty).
\end{equation*}
Furthermore, for every $f\in \M\RM$,
\begin{equation}\label{E:Orlicz_modular_vs_norm_ball}
\|f\|_{L^A\RM}\leq 1 \quad \text{if and only if} \quad \int_\RR A(|f(x)|)\dd\mu(x) \leq 1.
\end{equation}

Before characterizing the validity of \eqref{E:Orlicz_ac_infty}, we need to introduce a relation between Young functions.
\begin{definition}    
Let $A$ and $B$ be Young functions. We say that \emph{$B$ essentially dominates $A$ near zero}, and write $A\ll_0B$, if $B(t)>0$ for every $t>0$, and 
\begin{equation*}
\lim_{t\to0_+}\frac{A(\lambda t)}{B(t)}=0 \quad \text{for every $\lambda > 0$}.
\end{equation*}
\end{definition}

\begin{remark}
Let $B$ be a Young function. Using \eqref{E:Orlicz_fund} and the fact that $B^{-1}(0) = t_0$, it is straightforward to verify that
\begin{equation}\label{E:orlicz_lim_fund}
B(t)>0 \quad \text{for every $t>0$} \qquad \text{if and only if} \qquad \lim_{t\to\infty} \varphi_{L^B}(t) = \infty.
\end{equation}
\end{remark}

\begin{proposition}\label{prop:Orlicz_acInfty}
Let A and B be Young functions. Then
\begin{equation*}
L^B\RM\acInfty L^A\RM \quad \text{if and only if} \quad A\ll_0 B.
\end{equation*}
\end{proposition}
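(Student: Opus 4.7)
The plan is to use Theorem~\ref{thm:characterization_infty_convergence} applied to the admissible r.i.~BFS $L^B$, which reduces the equivalence to two conditions on sequences: (a) $\lim_{t\to\infty}\fund_{L^B}(t)=\infty$, and (b) every $L^B$-bounded sequence $\{f_n\}$ with $\|f_n\|_{L^\infty}\to0$ satisfies $\|f_n\|_{L^A}\to0$. Via \eqref{E:orlicz_lim_fund}, condition~(a) is exactly the requirement ``$B(t)>0$ for every $t>0$'' built into $A\ll_0 B$. Hence the task reduces to showing that, under $B(t)>0$ for $t>0$, the vanishing condition $\lim_{t\to0_+} A(\lambda t)/B(t)=0$ for every $\lambda>0$ is equivalent to condition~(b).

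For the implication $A\ll_0 B\Rightarrow L^B\acInfty L^A$, I would fix $\varepsilon>0$ and set $\lambda=1/\varepsilon$. Choose $\delta=1$; by $A\ll_0 B$ there exists $t_\delta>0$ with $A(\lambda t)\leq B(t)$ for all $0<t\leq t_\delta$. Since $\|f_n\|_{L^\infty}\to0$, eventually $|f_n|\leq t_\delta$ a.e., so pointwise $A(\lambda|f_n|)\leq B(|f_n|)$, and integrating together with \eqref{E:Orlicz_modular_vs_norm_ball} (applied with $\|f_n\|_{L^B}\leq 1$) yields $\int_\RR A(\lambda|f_n|)\dd\mu\leq\int_\RR B(|f_n|)\dd\mu\leq 1$. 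Another application of \eqref{E:Orlicz_modular_vs_norm_ball} then gives $\|f_n\|_{L^A}\leq 1/\lambda=\varepsilon$ for $n$ large.

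For the converse implication, I would argue by contradiction. If $A\ll_0 B$ fails while $B(t)>0$ for $t>0$, then there exist $\lambda_0,\varepsilon_0>0$ and a sequence $t_n\to 0_+$ with $A(\lambda_0 t_n)\geq\varepsilon_0 B(t_n)$. Since $B(t_n)>0$ and $B(t_n)\to 0$ (by left-continuity and $B(0)=0$), and since $\mu(\RR)=\infty$, we can select $\mu$-measurable sets $E_n\subseteq\RR$ with $\mu(E_n)=1/B(t_n)$. Define $f_n=t_n\chi_{E_n}$; then $\int_\RR B(|f_n|)\dd\mu=1$, so $\|f_n\|_{L^B}\leq 1$, while $\|f_n\|_{L^\infty}=t_n\to 0$. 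The convexity inequality $A(\alpha s)\geq\alpha A(s)$ for $\alpha\geq 1$ (which follows from $A(0)=0$ and convexity) gives, with $c:=\min\{\varepsilon_0,1\}$,
\begin{equation*}
\int_\RR A\!\left(\frac{\lambda_0|f_n|}{c}\right)\dd\mu
= \frac{A(\lambda_0 t_n/c)}{B(t_n)}
\geq \frac{1}{c}\cdot\frac{A(\lambda_0 t_n)}{B(t_n)}
\geq \frac{\varepsilon_0}{c}\geq 1.
\end{equation*}
Hence $\|f_n\|_{L^A}\geq c/\lambda_0>0$, contradicting condition~(b) of Theorem~\ref{thm:characterization_infty_convergence}.

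The main technical point is the forward (necessity) direction: choosing the right normalization for the test sequence $f_n=t_n\chi_{E_n}$ so that $\|f_n\|_{L^B}\leq 1$ while $\|f_n\|_{L^A}$ stays bounded below. The convexity scaling $A(\alpha s)\geq\alpha A(s)$ for $\alpha\geq 1$ is what allows us to upgrade the qualitative statement $A(\lambda_0 t_n)/B(t_n)\geq\varepsilon_0$ (which need not exceed $1$) into a modular estimate that forces $\|f_n\|_{L^A}$ to be bounded below, and is the only step that is not entirely routine.
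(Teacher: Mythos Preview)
Your proof is correct and follows essentially the same route as the paper's: both directions rest on the same ideas (the pointwise comparison $A(\lambda t)\leq B(t)$ for small $t$ in the sufficiency direction, and the test functions $f_n=t_n\chi_{E_n}$ with $\mu(E_n)=1/B(t_n)$ in the necessity direction). The only presentational difference is that the paper invokes Theorem~\ref{thm:characterization_for_ri} for sufficiency and Theorem~\ref{thm:characterization_infty_convergence} for necessity, whereas you use Theorem~\ref{thm:characterization_infty_convergence} throughout; your version is arguably more uniform.

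Two minor slips to tidy up. First, in the necessity direction your choice $c=\min\{\varepsilon_0,1\}$ only yields $\int_\RR A(\lambda_0|f_n|/c)\,\dd\mu\geq 1$, and the equivalence \eqref{E:Orlicz_modular_vs_norm_ball} does \emph{not} let you pass from $\int A(|g|)\geq 1$ to $\|g\|_{L^A}\geq 1$ (equality in the modular is compatible with $\|g\|_{L^A}\leq 1$). The fix is immediate: take any $c\in(0,\varepsilon_0)$, say $c=\varepsilon_0/2$, so that $\varepsilon_0/c>1$ and the modular is strictly greater than $1$. The paper handles this instead by first rescaling $B$ (using that $A\ll_0 B\iff A\ll_0 cB$ and that $L^B$ and $L^{cB}$ carry equivalent norms) so as to assume the $\limsup$ exceeds $1$. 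Second, the justification ``$B(t_n)\to 0$ by left-continuity and $B(0)=0$'' should cite convexity rather than left-continuity: for any $t_1>0$ with $B(t_1)<\infty$ one has $B(t)\leq (t/t_1)B(t_1)\to 0$ as $t\to 0_+$.
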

\begin{proof}
Assume that $A\ll_0B$ holds. In particular, $B(t)>0$ for every $t>0$. Let $\varepsilon>0$ be given. For $\lambda = 1/\varepsilon$, the definition of $A \ll_0 B$
yields $\delta>0$ such that
\begin{equation}\label{E:Orlicz_acInfty:1}
 A\left(\frac{t}{\varepsilon}\right)\leq B(t) \quad \text{for every $t\in[0,\delta)$}.
\end{equation}
Furthermore, by \eqref{prel:fund_ineq} and \eqref{E:orlicz_lim_fund}, there exists $a>0$ such that
\begin{equation*}
\sup_{\|f\|_{L^B(0, \infty)}\leq1} f^*(t) \leq \sup_{\|f\|_{L^B(0, \infty)}\leq1} f^*(a) \leq \frac{1}{\varphi_{L^B}(a)} < \delta \quad \text{for every $t\geq a$}.
\end{equation*}
Consequently, using \eqref{E:Orlicz_acInfty:1}, \eqref{E:Orlicz_via_rearrangement}, and \eqref{E:Orlicz_modular_vs_norm_ball}, we obtain
\begin{align*}
    \int_0^\infty A\left(\frac{f^*(t)\chi_{(a,\infty)}(t)}{\varepsilon}\right)\dd{t} & \leq \int_a^\infty B\left(f^*(t)\right)\dd{t}\leq \int_0^\infty B\left(f^*(t)\right)\dd{t}\\
    &= \int_0^\infty B\left(|f(t)|\right)\dd{t} \leq 1
\end{align*}
for every $f\in\M(0, \infty)$ such that $\|f\|_{L^B(0, \infty)}\leq1$. Hence,
\begin{equation*}
\sup_{\|f\|_{L^B(0,\infty)}\leq1}\|f^*\chi_{(a,\infty)}\|_{L^A(0,\infty)}\leq\varepsilon.
\end{equation*}
Since $\varepsilon>0$ was arbitrary, Theorem~\ref{thm:characterization_for_ri} implies that $L^B\RM\acInfty L^A\RM$.

Now, assume that $L^B\RM\acInfty L^A\RM$ holds. By Proposition~\ref{prop:necessaryCond_fundFuncs_decay}(i) and \eqref{E:orlicz_lim_fund}, we have $\lim_{t\to\infty}\fund_{L^B}(t) = \infty$ and $B(t)>0$ for every $t>0$. Suppose that
\begin{equation*}
\limsup_{t\to0_+}\frac{A(\lambda t)}{B(t)}>0 \quad \text{for some $\lambda>0$}.
\end{equation*}
Since $A\ll_0 B$ if and only if $A\ll_0cB$ for any $c>0$, we may assume without loss of generality that
\begin{equation*}
\limsup_{t\to0_+}\frac{A(\lambda t)}{B(t)}>1.
\end{equation*}
Then we can find a sequence $\{t_n\}_{n=1}^{\infty}\subseteq(0,\infty)$ with $\lim_{n\to\infty}t_n = 0$ such that 
\begin{equation}\label{E:example_orlicz:2}
\infty>A(\lambda t_n)>B(t_n)>0 \quad \text{for every} \quad n\in\mathbb{N}.
\end{equation}
We will reach a contradiction with the second statement in Theorem~\ref{thm:characterization_infty_convergence}. For each $n\in\N$, set $a_n=1/B(t_n)$ and choose a set $E_n\subseteq\RR$ with $0< \mu(E_n)= a_n < \infty$. Define
\begin{equation*}
    f_n=t_n\chi_{E_n} \quad \text{for every $n\in\N$}.
\end{equation*}
Then
\begin{equation*}
\int_\RR B(f_n) \dd\mu = a_n B\left(t_n\right) = 1 \quad \text{for every $n\in\N$}.
\end{equation*}
Hence,
\begin{equation*}
    \|f_n\|_{L^B\RM}\leq1 \quad \text{for every $n\in\N$}.
\end{equation*}
Moreover, since $t_n\to 0_+$, we have
\begin{equation*}
    \lim_{n\to\infty}\|f_n\|_{L^\infty\RM}=0.
\end{equation*}
However, using \eqref{E:example_orlicz:2}, we also have
\begin{equation*}
\int_\RR A(\lambda f_n) \dd\mu > \int_{E_n} B(t_n)\dd{t} = 1 \quad \text{for every $n\in\N$}.
\end{equation*}
It follows that $\|f_n\|_{L^A\RM} > \frac{1}{\lambda}>0$ for every $\N$. Hence,
\begin{equation*}
    \limsup_{n\to\infty}\|f_n\|_{L^A\RM} > 0,
\end{equation*}
which contradicts the second statement in Theorem~\ref{thm:characterization_infty_convergence}. Therefore, $A\ll_0B$ holds as desired.
\end{proof}

\begin{remark}
    When $\mu(\RR) < \infty$, it is known (see, e.g., \cite[Chapter~4]{PKJFbook}) that $L^B\RM \ac L^A\RM$ if and only if $A(t) < \infty$ for every $t\in(0, \infty)$ and
    \begin{equation*}
        \lim_{t\to\infty} \frac{A(\lambda t)}{B(t)} = 0 \quad \text{for every $\lambda > 0$}.
    \end{equation*}
    It is not difficult to verify that the same condition characterizes the validity of $L^B\RM \acLoc L^A\RM$ when $\mu(\RR) = \infty$.
\end{remark}

\subsection{Inhomogeneous Sobolev spaces and compactness}
In this final subsection, we demonstrate how the general compactness principle in Theorem~\ref{thm:Lions_compactness_lemma} can be applied to convergence of functions in (inhomogeneous) Sobolev spaces on the whole space $\rn$, $n\geq2$. We do not aim to provide a comprehensive treatment of this problem here. Instead, we restrict ourselves to a rather concrete, nonlimiting situation, which illustrates a possible use of Theorem~\ref{thm:Lions_compactness_lemma} without requiring additional technical computations.

Let $A$ be a Young function and let either $p\in(1, n)$ and $q\in[1, \infty]$ or $p=q=1$. Denote by $W^1(L^A, L^{p,q})(\rn)$ the (inhomogeneous) Sobolev space
\begin{equation*}
    W^1(L^A, L^{p,q})(\rn) = \{u: \text{$u$ is weakly differentiable}, u\in L^A(\rn), |\nabla u|\in L^{p,q}(\rn) \}
\end{equation*}
endowed with the norm
\begin{equation*}
    \|u\|_{W^1(L^A, L^{p,q})(\rn)} = \|u\|_{L^A(\rn)} + \|\,|\nabla u|\,\|_{L^{p,q}(\rn)},\ u\in W^1(L^A, L^{p,q})(\rn).
\end{equation*}
By \cite[Theorems~3.11 and 4.3]{ACPS:18}, we have the embedding
\begin{equation}\label{E:application_Lions_compactness_inhom_Sob:Sob_emb}
    \|u\|_{W^1(L^A, L^{p,q})(\rn)} \hookrightarrow \big( L^A\cap L^{\frac{np}{n-p},q}_{loc} \big)(\rn),
\end{equation}
where
\begin{equation*}
    L^{\frac{np}{n-p},q}_{loc}(\rn) = \{f\in\M(\rn): \|f^*\chi_{(0,1)}\|_{L^{\frac{np}{n-p},q}(0, \infty)} < \infty\}
\end{equation*}
is a local version of the Lorentz space $L^{\frac{np}{n-p},q}(\rn)$. Since $1 < \frac{np}{n-p} < \infty$, $L^{\frac{np}{n-p},q}_{loc}(\rn)$ is an r.i.~BFS when $q\in[1, \frac{np}{n-p}]$, and is equivalent to one when $q\in(\frac{np}{n-p}, \infty]$. It is not difficult to verify that a function $f\in\M(\rn)$ belongs to $L^{\frac{np}{n-p},q}_{loc}(\rn)$ if and only if $f\chi_E\in L^{\frac{np}{n-p},q}(\rn)$ for every $E\subseteq\rn$ with finite measure.

\begin{theorem}\label{thm:application_Lions_compactness_inhom_Sob}
    Let $A$ be a Young function, and let either $p\in(1, n)$ and $q\in[1, \infty]$ or $p=q=1$. Let $\{u_n\}_{n=1}^\infty\subseteq W^1(L^A, L^{p,q})(\rn)$ be a bounded sequence such that
    \begin{equation}\label{thm:application_Lions_compactness_inhom_Sob:eq:4}
        \lim_{n\to\infty} |\{x\in\rn: |u_n(x)| > \varepsilon \}| = 0 \quad \text{for every $\varepsilon>0$}.
    \end{equation}
    If $B$ is a Young function such that $B \ll_0 A$ and simultaneously either
    \begin{equation}\label{thm:application_Lions_compactness_inhom_Sob:eq:1}
        \lim_{t\to\infty} \frac{t^{\frac{np}{n-p}}}{B(t)} = \infty \quad \text{when $q\in\Big[ 1, \frac{np}{n-p} \Big]$}
    \end{equation}
    or
    \begin{equation}\label{thm:application_Lions_compactness_inhom_Sob:eq:2}
        \int_1^\infty \Bigg( \frac{B(t)}{t^{\frac{np}{n-p}}} \Bigg)^{\frac{q}{q-{\frac{np}{n-p}}}} \frac{\dd{t}}{t} < \infty \quad \text{when $q\in\Big( \frac{np}{n-p}, \infty \Big]$},
    \end{equation}
    then
    \begin{equation}\label{thm:application_Lions_compactness_inhom_Sob:eq:3}
        \lim_{n\to\infty} \|u_n\|_{L^B(\rn)} = 0.
    \end{equation}
    When $q = \infty$, the exponent $\frac{q}{q-{\frac{np}{n-p}}}$ in \eqref{thm:application_Lions_compactness_inhom_Sob:eq:2} is to interpreted as $1$.
\end{theorem}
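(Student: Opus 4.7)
The plan is to apply the abstract compactness principle Theorem~\ref{thm:Lions_compactness_lemma} with the choice $X = L^A(\rn)$, $Y = L^{np/(n-p),q}_{loc}(\rn)$, and $Z = L^B(\rn)$. All three spaces are (equivalent to) rearrangement-invariant BFSs, so in particular $X$ is admissible. By the Sobolev embedding \eqref{E:application_Lions_compactness_inhom_Sob:Sob_emb}, the boundedness of $\{u_n\}$ in $W^1(L^A, L^{p,q})(\rn)$ transfers to boundedness in both $X$ and $Y$, while the assumption \eqref{thm:application_Lions_compactness_inhom_Sob:eq:4} is precisely the convergence-in-measure hypothesis \eqref{E:Lions_compactness_lemma:convergence_in_measure}. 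The first relation, $X \acInfty Z$, i.e., $L^A(\rn) \acInfty L^B(\rn)$, is immediate from Proposition~\ref{prop:Orlicz_acInfty} together with the assumption $B \ll_0 A$.

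The main technical step is to verify $Y \acLoc Z$. Setting $r = np/(n-p)$, the representation-space norm of $Y$ is equivalent to $\|g^*\chi_{(0,1)}\|_{L^{r,q}(0,\infty)}$, so by Theorem~\ref{thm:characterization_for_ri}(ii) it suffices to prove
\begin{equation*}
    \lim_{a\to 0_+} \sup\bigl\{\|h\|_{L^B(0,\infty)}: h = h^*,\ \spt h \subseteq (0,a),\ \|h\|_{L^{r,q}(0,\infty)} \leq 1\bigr\} = 0.
\end{equation*}
I would split into two cases according to the position of $q$ relative to $r$.

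When $q \in [1, r]$, the embedding $L^{r,q} \hookrightarrow L^r$ reduces the task to the classical almost-compact embedding $L^r(\Omega) \acLoc L^B(\Omega)$ on a set of finite measure. Given $\varepsilon > 0$, condition \eqref{thm:application_Lions_compactness_inhom_Sob:eq:1} yields an $M > 0$ with $B(t) \leq \varepsilon t^r$ for $t \geq M$, and splitting the Orlicz modular $\int_0^a B(h(t)/\lambda)\,dt$ at the threshold $\lambda M$ and invoking the uniform $L^r$-bound on $h$ produces an Orlicz-norm estimate that vanishes uniformly as $a \to 0_+$. When $q \in (r, \infty]$ the pointwise bound $h(t) \leq C t^{-1/r}$ is too crude; one must exploit the additional decay encoded in the Lorentz parameter $q$. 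Here I would estimate the Orlicz modular either by splitting $h$ into its level sets and applying a H\"older-type inequality in $L^{r,q}$, or equivalently by duality through the associate space $L^{r',q'}$, in a way that converts the problem into the verification of the integrability condition \eqref{thm:application_Lions_compactness_inhom_Sob:eq:2}; this integrability then forces the relevant tail contributions to vanish as $a \to 0_+$. This regime is the principal obstacle, as it reproduces the sharp Lorentz--Orlicz compact embedding threshold and cannot be accessed by the crude sufficient condition of Corollary~\ref{cor:suff_cond_for_qri}.

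Once both $X \acInfty Z$ and $Y \acLoc Z$ are in hand, Theorem~\ref{thm:Lions_compactness_lemma} immediately yields \eqref{thm:application_Lions_compactness_inhom_Sob:eq:3}.
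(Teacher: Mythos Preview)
Your approach is essentially identical to the paper's: apply Theorem~\ref{thm:Lions_compactness_lemma} with $X=L^A(\rn)$, $Y=L^{np/(n-p),q}_{loc}(\rn)$, $Z=L^B(\rn)$, using the Sobolev embedding \eqref{E:application_Lions_compactness_inhom_Sob:Sob_emb} for boundedness in $X$ and $Y$, and Proposition~\ref{prop:Orlicz_acInfty} for $X\acInfty Z$. The only difference lies in the verification of $Y\acLoc Z$: the paper simply cites \cite[Theorem~1.2]{MPT:25}, whereas you sketch a direct argument split by the cases $q\le np/(n-p)$ and $q>np/(n-p)$. Your sketch for the first case (reduce to $L^r\acLoc L^B$ via $L^{r,q}\hookrightarrow L^r$ and split the modular at a threshold) is standard and correct; for the second case your outline points in the right direction but remains schematic, and carrying it out in full would amount to reproving the Lorentz--Orlicz almost-compact embedding from the cited reference.
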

\begin{proof}
    It follows from \eqref{E:application_Lions_compactness_inhom_Sob:Sob_emb} that the sequence $\{u_n\}_{n=1}^\infty$ is bounded in both $L^A(\rn)$ and $L^{\frac{np}{n-p},q}_{loc}(\rn)$. By Proposition~\ref{prop:Orlicz_acInfty}, $L^A(\rn)\acInfty L^B(\rn)$, as $B \ll_0 A$. Furthermore, it follows from \cite[Theorem~1.2]{MPT:25} that $L^{\frac{np}{n-p},q}_{loc}(\rn) \acLoc L^B(\rn)$ when either \eqref{thm:application_Lions_compactness_inhom_Sob:eq:1} or \eqref{thm:application_Lions_compactness_inhom_Sob:eq:2} is satisfied. Therefore, using Theorem~\ref{thm:Lions_compactness_lemma} with $X = L^A(\rn)$, $Y = L^{\frac{np}{n-p},q}_{loc}(\rn)$, and $Z = L^B(\rn)$, we obtain \eqref{thm:application_Lions_compactness_inhom_Sob:eq:3}.
\end{proof}

\begin{remark}\label{rem:application_Lions_compactness_inhom_Sob_general_ri}
    Theorem~\ref{thm:application_Lions_compactness_inhom_Sob} is a special case of a more general principle. Let $X$ and $Y$ be rearrangement-invariant Banach function spaces over $\rn$ and let $W^1(X,Y)(\rn)$ be the (inhomogeneous) Sobolev space defined as $W^1(L^A, L^{p,q})(\rn)$ but with $L^A$ and $L^{p,q}$ replaced by $X$ and $Y$, respectively. By \cite[Theorem~4.3]{ACPS:18},
    \begin{equation*}
        W^1(X,Y)(\rn) \hookrightarrow (Y_{opt} \cap X)(\rn),
    \end{equation*}
    where $Y_{opt}$ is the rearrangement-invariant Banach function space over $\rn$ whose associate function norm satisfies
    \begin{equation*}
        \|g\|_{\widebar{Y_{opt}}'} = \|t^\frac1{n} (g\chi_{(0,1)})^{**}(t)\chi_{(0,1)}(t)\|_{\widebar{Y}'},\ g\in\Mpl(0, \infty).
    \end{equation*}
    Let $\{u_n\}_{n = 1}^\infty\subseteq W^1(X,Y)(\rn)$ be a bounded sequence satisfying \eqref{thm:application_Lions_compactness_inhom_Sob:eq:4}. It follows from Theorem~\ref{thm:Lions_compactness_lemma} that
    \begin{equation*}
        \lim_{n\to\infty} \|u_n\|_Z = 0
    \end{equation*}
    for every quasi-Banach function space $Z$ such that $Y_{opt} \acLoc Z$ and $X\acInfty Z$.
\end{remark}

\bibliography{bibliography}
\end{document}